\documentclass[pdflatex,sn-mathphys-num]{sn-jnl}% Math and Physical Sciences Numbered Reference Style
\usepackage{graphicx}
\usepackage{lipsum}
\usepackage{amsfonts}
\usepackage{epstopdf}
\usepackage[shortlabels]{enumitem}
\usepackage{microtype}
\usepackage[caption=false]{subfig}
\usepackage{booktabs}
\usepackage{amsthm}
\usepackage{hyperref}
\usepackage{float}
\usepackage{url}
\usepackage{amsmath}
\usepackage{amssymb}
\usepackage{mathtools}
\usepackage{physics}
\usepackage{xcolor}
\usepackage{bbm}
\usepackage{soul}
\usepackage{multirow}
\usepackage{adjustbox}
\usepackage[algo2e,ruled,linesnumbered]{algorithm2e}
\usepackage{algorithm}
\usepackage{algorithmic}
\usepackage[capitalize]{cleveref}
\usepackage{cases}
\usepackage{natbib}
\usepackage{tabularray}
\usepackage{diagbox}
\usepackage{longtable}
\usepackage{rotating}
\usepackage{mathrsfs}
\usepackage[title]{appendix}
\usepackage{textcomp}
\usepackage{manyfoot}
\usepackage{listings}
\usepackage{longtable}

\usepackage{tikz}
\usetikzlibrary{tikzmark,calc}

\theoremstyle{thmstyleone}%
\newtheorem{theorem}{Theorem}%  meant for continuous numbers
\newtheorem{proposition}[theorem]{Proposition}% 
\newtheorem*{proposition*}{Proposition}% 
\AddToHook{env/proposition/begin}{\crefalias{theorem}{proposition}}

\newtheorem{lemma}[theorem]{Lemma}
\AddToHook{env/lemma/begin}{\crefalias{theorem}{lemma}}

\theoremstyle{thmstyletwo}%
\newtheorem{remark}{Remark}%

\theoremstyle{thmstylethree}%
\newtheorem{definition}{Definition}%

\usepackage{amsmath,amsfonts,bm}

\def\1{\bm{1}}

\DeclareMathAlphabet{\mathsfit}{\encodingdefault}{\sfdefault}{m}{sl}
\SetMathAlphabet{\mathsfit}{bold}{\encodingdefault}{\sfdefault}{bx}{n}

\def\gC{{\mathcal{C}}}
\def\gD{{\mathcal{D}}}

\def\gM{{\mathcal{M}}}
\def\gN{{\mathcal{N}}}

\def\gT{{\mathcal{T}}}

\def\sR{{\mathbb{R}}}

\newcommand{\R}{\mathbb{R}}

\newcommand{\KLz}{Kurdyka--Łojasiewicz{~}\allowbreak}
\newcommand{\KL}{\mathrm{D}_{\mathrm{KL}}}

\newcommand{\Rn}{\sR^n}
\newcommand{\Rm}{\sR^m}

\newcommand{\beq}{\begin{equation}}
\newcommand{\eeq}{\end{equation}}

\newcommand{\Id}{\mathrm{Id}}

\newcommand{\usol}{u^{\star}}

\newcommand{\wsol}{w^{\star}}

\newcommand{\xk}{x^{(k)}}
\newcommand{\xkp}{x^{(k+1)}}
\newcommand{\xsol}{x^{\star}}
\newcommand{\xhat}{\hat{x}}

\newcommand{\ykp}{y^{(k+1)}}

\newcommand{\zk}{z^{(k)}}
\newcommand{\zkp}{z^{(k+1)}}

\newcommand{\laml}{\lambda_l}
\newcommand{\lamg}{\lambda_g}
\newcommand{\lamp}{\lambda_p}

\newcommand{\sgo}{\sigma_1}
\newcommand{\sgt}{\sigma_2}
\newcommand{\pso}{\phi_{1}}
\newcommand{\pst}{\phi_{2}}

\newcommand{\dysgs}{PnP-DYS\textsuperscript{GS-GS}}
\newcommand{\dystv}{PnP-DYS\textsuperscript{GS-TV}}

\DeclareMathOperator*{\argmax}{arg\,max}
\DeclareMathOperator*{\argmin}{arg\,min}

\DeclareMathOperator{\prox}{prox}

\DeclareMathOperator{\dist}{dist}

\DeclareMathOperator{\TV}{TV}

\DeclareMathOperator{\dom}{dom}
\DeclareMathOperator{\total}{total}
\DeclareMathOperator{\zer}{zer}
\newcommand{\infconv}{\mathbin{\square}}
\newcommand{\iprod}[2]{\langle #1,\,#2 \rangle}

\newcommand{\sfrac}[2]{%
    {\mathsmaller{\frac{\raisebox{0.05em}{\footnotesize $#1$}}{\raisebox{-0.15em}{\footnotesize $#2$}}}}
}

\begin{document}

\title[Mixed-Noise Plug-and-Play with Infimal Convolution Fidelities and Multiple Priors]{Mixed-Noise Plug-and-Play with Infimal Convolution Fidelities and Multiple Priors}

%%=============================================================%%
%% GivenName	-> \fnm{Joergen W.}
%% Particle	-> \spfx{van der} -> surname prefix
%% FamilyName	-> \sur{Ploeg}
%% Suffix	-> \sfx{IV}
%% \author*[1,2]{\fnm{Joergen W.} \spfx{van der} \sur{Ploeg} 
%%  \sfx{IV}}\email{iauthor@gmail.com}
%%=============================================================%%

\author[1]{\fnm{Ziqi} \sur{Qin}}\email{zq245@cam.ac.uk}

\author*[2]{\fnm{Hong Ye} \sur{Tan}}\email{hyt35@math.ucla.edu}
% \equalcont{These authors contributed equally to this work.}

\author[1]{\fnm{Ander} \sur{Biguri}}\email{ab2860@cam.ac.uk}
% \equalcont{These authors contributed equally to this work.}

\author[1]{\fnm{Jingwei} \sur{Liang}}\email{jl993@cam.ac.uk}
% \equalcont{These authors contributed equally to this work.}

\author[1]{\fnm{Carola--Bibiane} \sur{Sch\"onlieb}}\email{cbs31@cam.ac.uk}
% \equalcont{These authors contributed equally to this work.}

\affil[1]{\orgdiv{Department of Applied Mathematics and Theoretical Physics}, \orgname{University of Cambridge}, \orgaddress{\street{Wilberforce Road}, \city{Cambridge}, \postcode{CB3 0WA}, \state{Cambridgeshire}, \country{United Kingdom}}}

\affil*[2]{\orgdiv{Department of Mathematics}, \orgname{University of California Los Angeles}, \orgaddress{\street{520 Portola Plaza}, \city{Los Angeles}, \postcode{90024}, \state{California}, \country{USA}}}

% \affil[3]{\orgdiv{Department}, \orgname{Organization}, \orgaddress{\street{Street}, \city{City}, \postcode{610101}, \state{State}, \country{Country}}}

%%==================================%%
%% Sample for unstructured abstract %%
%%==================================%%

\abstract{Plug-and-Play (PnP) algorithms are a class of iterative methods for inverse imaging. Within an optimization algorithm, they combine a flexible fidelity term, encoding the forward operator, and a pretrained image denoiser, in order to deal with more severe corruptions such as blurring or downsampling when reconstructing an image. This work studies provably convergent PnP methods for mixed-noise forward processes by using the infimal convolution as a fidelity term, providing a statistical interpretation as a joint maximum a-posteriori estimator over both noises, and preserving the Bayesian MAP interpretation of PnP methods. Independently, we extend the PnP formulation to multiple prior terms using the Davis--Yin three-operator splitting. This extension can be combined with either standard fidelities or the proposed mixed-noise infimal-convolution fidelities. We verify that these generalized PnP methods are convergent under standard \KLz conditions. Numerical experiments on Laplace-Gaussian and Poisson--Gaussian noise demonstrate stable convergence of single-prior and multiple-prior PnP methods, and divergence under fidelity mismatch. Furthermore, PnP with infimal convolution fidelities are able to scale to noise up to 38\% standard deviation, with multiple priors reaching different stationary points that qualitatively preserve more textural properties.}

\keywords{Plug-and-play methods, Infimal convolution, Three-operator splitting, Mixed-noise restoration, \KLz convergence}

%%\pacs[JEL Classification]{D8, H51}

%%\pacs[MSC Classification]{35A01, 65L10, 65L12, 65L20, 65L70}

\maketitle

\section{Introduction}

Many imaging applications can be formulated as linear inverse problems, with prominent examples including image restoration, computed tomography in medical imaging, and astrophysics. For a linear forward operator $A:\Rn\rightarrow \Rm$ and a noise degradation operator $\gT:\Rm\rightarrow \Rm$, the problem is to find $x\in\Rn$ from a noisy measurement $y\in\Rm$ given by the forward model
\begin{equation*}\label{eq:forward}
y=\gT(Ax).
\end{equation*}
A common approach is \textit{variational regularization} \cite{benning2018modern}, where the solution is given as the minimizer of a (convex) optimization problem 
\begin{align*}
    \hat x = \argmin_{x \in \Rn} f(x;y) + g(x),
\end{align*}
where $f(x;y)$ is a data fidelity function that measures how well $\hat x$ satisfies the measurement, and $g(x)$ is a regularization term encoding prior knowledge of the solution. The most common noise model for $\gT$ is \textit{additive white Gaussian noise}, where the noisy measurement can be modeled as $y=\gT(Ax) = Ax+\omega$, where $\omega\sim \gN(0, \sigma^2 \Id)$ for some (known) noise level $\sigma$. This noise model corresponds to the squared fidelity term $f(x) = \|Ax - y\|^2/(2\sigma^2)$, and denoisers targeting Gaussian noise precisely target the conditional mean of the clean image given the noisy observation \cite{tan2025solving}. A wide variety of such denoisers exists in the literature, such as the classical nonlocal means or BM3D denoisers \cite{buades2005non,dabov2007image}, or recent denoisers based on deep neural networks \cite{zhang2017learning,zhang2017beyond}.

Gaussian noise models are often insufficient to model physical phenomena. A common alternative noise model is Poisson noise, where $\gT(Ax) \sim \operatorname{Pois}(Ax)$, occurring in physical low-photon imaging problems such as fluorescence microscopy or emission tomographic imaging \cite{morris2015imaging,timmermann2002multiscale,melidonis2023efficient}. In digital image transmission or satellite imaging, images may suffer from impulsive noise \cite{chan2005salt} or heavy-tailed distributions \cite{achim2003sar}. From a reconstruction point of view, the fidelity terms corresponding to the likelihood of these noise models often have poor regularity properties, such as exploding Lipschitz constants near hard constraints, arising from physical constraints such as pixel non-negativity. 

In addition to noise being non-Gaussian, images are rarely affected by only a single noise source. Instead, models often consider corruption by a combination of noise sources arising from different physical processes. For instance, in fluorescence microscopy and low-dose CT, the signal is typically corrupted by a mixture of Poisson shot noise inherent in photon counting, and Gaussian read-out noise originating from sensor electronics \cite{mannam2022real,chen2017low,sarder2006deconvolution}.

Solving mixed noise problems requires combining various fidelity terms together, manifesting as multiple terms in a variational formulation \cite{xiao2011restoration,benvenuto2008study}. Bayesian approaches are a particularly attractive framework to model mixed noise processes, and usually target either the minimum mean-squared error (MMSE) estimator \cite{luisier2010image,le2014unbiased}, or the maximum a-posteriori (MAP) estimator which is equivalent to a variational problem. Using Bayes' rule for the MAP estimator yields the \textit{infimal convolution} framework of \cite{calatroni2017infimal}, deriving classical regularization guarantees in the case of Laplace--Gaussian or Poisson--Gaussian noise. This provides a principled fidelity term that encodes the maximum joint likelihood of both noise processes.

Given some fidelity term that encodes the measurement consistency for mixed noise, it remains to define a regularization term. \textit{Plug-and-Play} (PnP) methods provide an implicit way of doing so, defined by an image denoiser. This work proposes to use the PnP framework in order to target mixed noise problems, first by utilizing the infimal convolution fidelity \cite{calatroni2017infimal}, and further by using the three-operator splitting of Davis and Yin \cite{davis2017three} to utilize multiple denoisers. This combines the exact MAP interpretation of the infimal convolution fidelity with the tuneable and expressive reconstruction power of PnP methods.

\section{Related work and contributions}

\subsection{Plug-and-Play (PnP) Methods}
The first Plug-and-Play method was introduced by Venkatakrishnan et al. for image reconstruction, motivated by splitting algorithms for variational regularization problems \cite{venkatakrishnan2013plug}. The main observation is that solving $\min_x f+g$ can be done by alternating descent steps on $f$ and $g$ individually, using algorithms such as the alternating direction method of multipliers (ADMM). The descent step on the prior/regularization term can be interpreted as ``improving the image'', or simply image denoising. In this way, the task of hand-crafting a prior can be replaced with simply choosing a suitable image denoiser, demonstrated in \cite{venkatakrishnan2013plug} to be reasonable for simple tomographic problems. For an overview of applications of PnP, we refer to \cite{kamilov2023plug}.

While the observation above seemingly trades the optimization perspective for flexibility and empirical performance, the PnP framework is intricately linked to composite convex optimization. In particular, a PnP method can be defined by inserting a denoiser into a first-order splitting algorithm, such as ADMM \cite{chan2016plug}, proximal gradient descent \cite{ryu2019plug}, Douglas--Rachford splittings (DRS) \cite{hurault2022proximal}, and half-quadratic splittings \cite{zhang2021plug}. The choice of denoiser is flexible; early methods used the aforementioned non-local means and BM3D algorithms, while recently deep learning models such as DnCNN \cite{zhang2017beyond} and DRUNet \cite{zhang2021plug} have become the standard due to their superior performance. Related works incorporating general denoisers with some data fidelity include the regularization-by-denoising framework \cite{romano2017little}, using flow-matching models or diffusion models in place of the denoiser \cite{martin2025pnp, kawar2022denoising,wu2024principled}, or using denoisers within stochastic splittings to sample from a distribution \cite{laumont2022bayesian}. Proximal denoising steps can also be approximated with inner loops, such as unrolled dual forward-backward networks \cite{repetti2022dual}, or dictionary based synthesis denoisers \cite{kowalski2026analysis}.

Theoretical analysis of PnP methods can be adapted from proofs for splitting methods, as a desirable property is whether or not the algorithm converges to a fixed point. This allows for a natural choice of stopping time, as the performance of earlier works depends heavily on the number of iterations. The initial work of \cite{chan2016plug} shows convergence of PnP-ADMM for a family of ``bounded denoisers'' with decreasing step size. Towards more standard assumptions, \cite{ryu2019plug} establishes convergence of various PnP algorithms when the fidelity $f$ is strongly convex and the denoiser $D_\sigma$ satisfies $D_\sigma - I$ being Lipschitz with sufficiently small Lipschitz constant. In general, the denoiser has to satisfy some Lipschitz assumption, such as an ``averaged denoiser'' assumption where $(1-\theta) \Id + \theta D_\sigma$ is nonexpansive for some $\theta\in (0,1)$, or a demicontractive assumption \cite{cohen2021regularization}.

A theoretically attractive denoiser parameterization focuses on gradient step (GS) denoisers, where the denoiser $\gD_{\sigma}$ is defined as the gradient of a scalar-valued potential \cite{cohen2021has,hurault2021gradient}, typically written as $\gD_\sigma = \Id - \nabla g_\sigma$. 

A crucial observation in \cite{hurault2022proximal} is that if $g_\sigma$ has $L$-Lipschitz gradient where $L<1$, then the denoiser takes the proximal form $\gD_{\sigma} = \prox_{\phi_{\sigma}}$ for a potentially non-convex function $\phi_{\sigma}$. Using this, the previous fixed-point theory can be strengthened by characterizing the fixed points, namely, the PnP methods converge to a critical point of the non-convex function $f + \phi_\sigma$. Recent extensions utilizing this identity include quasi-Newton accelerated PnP \cite{tan2023provably} and relaxed GS-PnP schemes for denoisers with Lipschitz constants of $\nabla g_\sigma$ greater than $1$ \cite{hurault2021gradient}. For a comprehensive survey of the latest theoretical advancements in PnP, we refer the reader to \cite{tan2025solving}.

\subsection{Mixed noise and regularization}
Variational approaches can be directly related to maximum a-posteriori (MAP) estimation in the Bayesian perspective. For a likelihood $\ell(y | x) \propto \exp(-f(x;y))$ arising from the noise degradation, and a given prior $\pi(x) \propto \exp(-g(x))$, Bayes' theorem states that the posterior $p(x|y) \propto \ell(y|x) \pi(x)$. The MAP estimator, defined to be $\argmax_x p(x|y)$, is then equivalent to the variational reconstruction $\argmin_x f(x;y) + g(x)$ by taking logs. This can be used to derive variational methods to solve noise with heavy-tailed distributions like the Cauchy noise \cite{sciacchitano2015variational}, multiplicative noise \cite{aubert2008variational,rudin2003multiplicative}, or low-photon imaging \cite{chan2016plug}. Different noises essentially require different fidelity functions.

Towards PnP approaches for non-Gaussian noise, initial works like \cite{rond2016poisson} consider transforming the Poisson-distributed noise into approximately Gaussian noise using the Anscombe transform. To be provably convergent, PnP methods for non-Gaussian noises typically require modifications, such as to avoid denoisers pushing iterates into infeasible regions like negative pixels for Poisson noise. For example, \cite{hurault2023bregman,klatzer2025efficient} utilizes Bregman proximal steps and boundary reflection for reconstruction and sampling. Inexact fidelity functions can also be used to yield provable convergence, but losing the exact likelihood structure given from the true fidelity \cite{modrzyk2026convergent}.

For PnP methods targeting mixed noise, existing strategies mainly consider modified fidelity functions, approximating the mixed noise distribution with another more tractable distribution. The authors in \cite{biquard2025pg} consider high-count Poisson--Gaussian noise by approximating it as Gaussian with spatially varying variance, while \cite{xu2025mixed} consider Gaussian-impulse noise with generalized Gaussian distributions.

Instead of approximating mixed noise with another distribution, we propose PnP through a joint MAP estimation over both the image and latent noise variables, based on the general framework of \textit{infimal convolutions} \cite{calatroni2017infimal,chambolle1997image}. Alternative approaches avoiding distributional approximations include the exact log-likelihood for mixed Poisson--Gaussian noise \cite{chouzenoux2015convex} (up to approximation of an infinite sum). For example, when the image is corrupted by two independent additive noises with one being Gaussian, the resulting data-fidelity term is equivalent to a Moreau envelope, providing a smooth, differentiable expression of the original fidelity function. 

In addition to modifying the fidelity, another possible method to improve reconstruction quality is to use multiple regularizers. For example, the total generalized variation of \cite{bredies2010total} uses sums of higher order (discrete) derivatives within image regularization, reducing staircasing effects from pure total variation regularization. Another natural scenario where this arises is when the image can be decomposed as a sum of two independently regularized components, such as the cartoon-texture decomposition \cite{vese2004image,osher2003image}. This imposes a total variation prior on the cartoon component, and a dual Sobolev norm on the texture component. In each case, using multiple regularizers requires solving a composite optimization problem with at least three terms. We propose to leverage this generalization in addition to the aforementioned infimal convolution fidelity, allowing for more expressive reconstruction capabilities.

\subsection{Contributions}
We propose the following contributions.
\begin{itemize}
    \item For problems with two sources of noise, we propose to use the infimal convolution fidelity term within PnP algorithms. We focus on integrating this fidelity into PnP algorithms utilizing the gradient of the fidelity. When both noises are log-concave, we provide two cases where the gradient of the infimal convolution is easily computable: assuming one is differentiable, the other has to be (i) differentiable, or (ii) strictly log-concave (possibly nonsmooth) and corresponding to additive noise. These decompositions include Poisson--Gaussian and Laplacian-Gaussian noise respectively as special cases. 
    \item We further consider the generalized PnP problem corresponding to three-term variational problems, using the Davis--Yin three-operator splitting to implement two deep denoisers as priors, denoted PnP-DYS. This is a general algorithm that can use standard fidelity terms such as the squared loss, or the infimal convolution fidelity. 
    \item We show that the three-operator splitting PnP-DYS is convergent with weak assumptions, namely under the standard assumption that one denoiser is a Lipschitz-restricted gradient step denoiser, and the other is simply a single-valued proximal function. Our result complements existing results on PnP methods using extrapolated three-operator splittings \cite{wu2024extrapolated}, while imposing weaker assumptions on the denoisers. This allows us to use \textit{two} deep denoisers to regularize, instead of only one deep denoiser and one classical regularizer like Tikhonov.
    
    \item We provide detailed experiments and hyperparameter ablations for natural images with mixed noise, including effects of fidelity mismatch, multiple priors, and convergence behavior across different mixed noise scenarios. Using the infimal convolution within PnP-DYS and PnP-PGD, we demonstrate that incorporating a mixed fidelity is necessary for many provable PnP methods to empirically converge for mixed noise tasks. Moreover, the infimal convolution can provide stable reconstruction with noise up to 38\% standard deviation, significantly higher than the 5\% considered in the literature. Qualitative experiments also demonstrate reduced oversmoothing in high noise scenarios when using PnP-DYS compared to PnP-PGD.
\end{itemize}

This work is structured as follows. In \Cref{sec:problem}, we recall the infimal convolution framework, and how it can be used to address mixed noise corruption. Utilizing the gradient of the convolution allows for substitution into the PnP framework. \Cref{sec:DYS} briefly recalls the Davis--Yin three-operator splitting. We describe the PnP-DYS formulation using two denoisers, complementary to the multiple fidelity approach introduced in \cite{wu2024extrapolated}. In \Cref{sec:convergence}, we demonstrate that under standard \KLz assumptions on the denoisers, the PnP iterations with infimal convolution fidelity and multiple denoiser priors converge. The experimental \Cref{sec:experiments} provides extensive numerical experiments of both single-prior and multiple-prior PnP algorithms, detailing the convergence and performance across different hyperparameter settings and corruption operators.

\section{Infimal Convolution Plug-and-Play}\label{sec:problem}
This section introduces the infimal convolution framework of \cite{calatroni2017infimal}, motivated from a Bayesian point of view. Using a nested minimization argument to compute the gradient of the infimal convolution, we can use this to implement PnP methods involving the gradient of the fidelity term. We then illustrate how to implement the infimal convolution into the three-operator Davis--Yin splitting.

We recall the linear inverse problem with mixed noise, which we assume has one additive component. This includes noise processes such as additive Laplace--Gaussian noise, or Poisson--Gaussian noise. The forward process can be formulated as follows: for a ground truth $x \in \R^n$ and linear operator $A: \Rn \to \Rm$, an observation $y$ is obtained by 
\begin{equation}
    y = z(Ax) + \omega,
\end{equation}
where $z$ itself is a noise degradation operator with likelihood $\ell(z|Ax)$, and $\omega$ is some independent additive noise. The infimal convolution can be derived by considering the joint MAP estimator over $x$ and $\omega$,
\begin{equation}\label{eq:MAPEstimator}
    (\xhat, \hat{\omega}) = \argmax_{x, \omega} p(x, \omega | y).
\end{equation}
By Bayes' theorem, this can be rewritten in terms of the likelihood and the prior as 
\begin{align}
    (\xhat, \hat{\omega}) &= \argmax_{x, \omega} p(y|x, \omega) p(x, \omega) \notag\\
    &= \argmax_{x, \omega} \ell(y  - \omega| Ax) p(x) p(\omega). \label{eq:MAPSplit}
\end{align}
This decomposes the MAP estimation problem into a product of the noise likelihoods $\ell(y-\omega| Ax)$ and $p(\omega)$, as well as the prior $p(x)$. By taking negative logarithms, the problem \labelcref{eq:MAPSplit} can be reformulated as a variational minimization:
\begin{equation}
    (\hat x, \hat \omega) = \argmin_{x, \omega} \left\{ f_1(Ax;y-\omega ) + f_2(\omega) + g(x) \right\},
\end{equation}
where $f_1 = - \log \ell(y-\omega | Ax)$ and $f_2 = -\log p(\omega)$ are the (convex) data fidelity terms corresponding to the noising processes $z(Ax)$ and $\omega$ respectively, and $g(x) = -\log p(x)$ represents the image prior. A key insight of \cite{calatroni2017infimal} is that the joint minimization over $x, \omega$ can be factorized into a nested minimization,
\begin{equation}\label{eq:nestedExplicit}
    \min_x \left[g(x) + \min_\omega \left(f_1(Ax;y-\omega) + f_2(\omega)\right)\right].
\end{equation}
Observe that the inner minimization term is in convolution form, where the arguments of $f_1$ and $f_2$ are $y-\omega$ and $\omega$ respectively. The \textit{infimal convolution} is defined to be this minimizer
\begin{equation}\label{eq:general_phi}
    (f_1 \infconv f_2)(Ax) \coloneqq \inf_{\omega}\left\{f_1(Ax; y-\omega) + f_2(\omega) \right\}.
\end{equation}
This groups the likelihoods from both noise components into one fidelity term. The MAP estimation problem of \labelcref{eq:MAPEstimator} can therefore be written as a standard two-term composite minimization problem
\begin{equation*}
    \min_x \left[(f_1 \infconv f_2)(Ax) + g(x)\right].
\end{equation*}
For the particular case of total variation regularization $g(x) = \|\nabla x\|_1$ in \cite{calatroni2017infimal}, this minimization is done by standard primal dual methods. In order to apply more general composite minimization algorithms, we need a descent direction on both fidelity and regularization. In the PnP framework, descent on $g$ is given by a denoiser. The next section considers the gradient of the infimal convolution $\nabla (f_1 \infconv f_2)$, and how it can be computed. 

\subsection{Gradients of the infimal convolution}\label{eq:gradsInfconv}

The gradient $\nabla_x (f_1 \infconv f_2)(Ax)$ of the infimal convolution \labelcref{eq:general_phi} can be written in terms of a partial derivative of the joint objective $\phi(Ax, \omega) = f_1(Ax; y-\omega) + f_2(\omega)$. This is formalized in the following proposition, and is sometimes known as the envelope theorem.
\begin{proposition}[{\cite[Cor. 10.14]{rockafellar1998variational}}]\label{prop:diffUnderMin}
    Let $\phi:\R^n \times \R^m \rightarrow \overline \R$ be a convex proper closed function. Assume further that $\phi(x,\omega)$ is level-bounded in $\omega$ locally uniformly in $x$, i.e. for any $\alpha \in \R$, the level sets $\{\omega \mid \phi(x,\omega) \le \alpha\}$ are locally uniformly bounded. Let $p(x) \coloneqq \inf_\omega \phi(x,\omega)$ be the optimum over the second argument, and let $\bar x \in \dom p$. If the point $\bar\omega$ attaining the minimizer in $p(\bar x)$ is unique, and $\phi$ is differentiable in $x$ at $(\bar x, \bar\omega)$, then $p$ is strictly differentiable at $\bar x$ with $\nabla p(\bar x) = \nabla_x \phi(\bar x, \bar \omega)$.
\end{proposition}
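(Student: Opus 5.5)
The plan is to reduce everything to convex subdifferential calculus. Three facts come first. Since $\phi$ is jointly convex, the marginal $p(x)=\inf_\omega \phi(x,\omega)$ is convex. Level-boundedness in $\omega$, locally uniformly in $x$, together with lower semicontinuity of $\phi$, gives that $p$ is proper and lower semicontinuous. It also gives that the infimum defining $p(x)$ is attained for every $x\in\dom p$, and that the argmin set depends outer semicontinuously on $x$; this is the parametric minimization theorem, Rockafellar--Wets Thm.~1.17.

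The core of the argument is the subgradient identity
\begin{equation*}
\partial p(\bar x) = \{ v : (v,0)\in\partial\phi(\bar x,\bar\omega)\},
\end{equation*}
valid for any minimizer $\bar\omega$. One inclusion is immediate. If $(v,0)\in\partial\phi(\bar x,\bar\omega)$, then for all $(x,\omega)$ we have $\phi(x,\omega)\ge \phi(\bar x,\bar\omega)+\langle v,x-\bar x\rangle$. Taking the infimum over $\omega$ gives $p(x)\ge p(\bar x)+\langle v,x-\bar x\rangle$.

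The reverse inclusion reads the same inequality backwards. If $v\in\partial p(\bar x)$, then $\phi(x,\omega)\ge p(x)\ge p(\bar x)+\langle v,x-\bar x\rangle=\phi(\bar x,\bar\omega)+\langle v,x-\bar x\rangle$. This says exactly that $(v,0)\in\partial\phi(\bar x,\bar\omega)$.

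Next comes the differentiability assumption. I interpret ``$\phi$ differentiable in $x$ at $(\bar x,\bar\omega)$'' as: every $(v,w)\in\partial\phi(\bar x,\bar\omega)$ has $v=\nabla_x\phi(\bar x,\bar\omega)$. This is automatic for convex $\phi$ if $\phi(\cdot,\bar\omega)$ is differentiable at $\bar x$ and $\bar x$ lies in the interior of the appropriate domain, since the $x$-component of a joint subgradient is a subgradient of the partial function $\phi(\cdot,\bar\omega)$. The identity above then gives $\partial p(\bar x)\subseteq\{\nabla_x\phi(\bar x,\bar\omega)\}$.

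It remains to show $\partial p(\bar x)$ is nonempty and that single-valuedness upgrades to strict differentiability. For the upgrade I would invoke the convex-analysis fact that a proper lsc convex function with $\bar x\in\operatorname{int}\dom p$ and singleton $\partial p(\bar x)$ is differentiable there. Differentiability of a finite convex function is automatically strict, because convex functions are locally Lipschitz on the interior of their domain and gradients are continuous where they exist.

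\textbf{Main obstacle.} The delicate step is establishing $\bar x\in\operatorname{int}\dom p$, which is needed for nonemptiness of $\partial p(\bar x)$ and for the singleton criterion. In the general statement this is where uniqueness of $\bar\omega$ and the local uniform level-boundedness enter. In Rockafellar--Wets this is handled by nonconvex subdifferential calculus for $\partial p$ and the horizon subdifferential $\partial^\infty p$, Thm.~10.13: the horizon part is controlled by $\{v:(v,0)\in\partial^\infty\phi(\bar x,\bar\omega)\}$, and this set vanishes under the differentiability hypothesis, which yields strict differentiability via Thm.~9.18.

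I would follow that route. Since the statement is cited verbatim as Cor.~10.14, the honest proof is to verify that the hypotheses here match Thm.~10.13 there. Outer semicontinuity of the argmin plus uniqueness gives a single active $\bar\omega$, convexity gives the subgradient identity above, and then one concludes as in the corollary.
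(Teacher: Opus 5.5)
The convex part of your argument is correct. The identity $\partial p(\bar x)=\{v:(v,0)\in\partial\phi(\bar x,\bar\omega)\}$ and the inclusion $\partial p(\bar x)\subseteq\{\nabla_x\phi(\bar x,\bar\omega)\}$ are properly derived. They give a more elementary route than the one the paper relies on: the paper has no proof of its own, only the citation of Rockafellar--Wets Cor.~10.14, which rests on the general nonconvex subdifferential calculus for marginal functions.

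\textbf{The gap.} Your proposal does not close. The step you call the main obstacle, $\bar x\in\operatorname{int}\dom p$ (needed for $\partial p(\bar x)\neq\emptyset$), is handed back to the citation. Your claim that $\{v:(v,0)\in\partial^\infty\phi(\bar x,\bar\omega)\}=\{0\}$ ``under the differentiability hypothesis'' is asserted, not shown. The diagnosis is also off: neither uniqueness of $\bar\omega$ nor level-boundedness is what yields interiority.

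Worse, under your stated reading of the hypothesis (every $(v,w)\in\partial\phi(\bar x,\bar\omega)$ has $v=\nabla_x\phi(\bar x,\bar\omega)$), the result is false. Take $\phi(x,\omega)=\omega^2-\sqrt{x}$ for $x\ge 0$ and $+\infty$ otherwise, at $(\bar x,\bar\omega)=(0,0)$.
\begin{itemize}
\item All other hypotheses hold, with unique minimizer $\bar\omega=0$.
\item $\partial\phi(0,0)=\emptyset$, so your condition holds vacuously.
\item Yet $p(x)=-\sqrt{x}$ is not differentiable at $0$.
\item Here $\{v:(v,0)\in\partial^\infty\phi(0,0)\}=(-\infty,0]$, so the horizon claim fails too.
\end{itemize}

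\textbf{The missing idea.} It is one line, and it uses the finiteness you mentioned only in passing.
\begin{itemize}
\item Differentiability of $\phi(\cdot,\bar\omega)$ at $\bar x$ forces it to be finite on a neighborhood $U$ of $\bar x$.
\item Since $p(x)\le\phi(x,\bar\omega)$, this gives $U\subseteq\dom p$.
\item $p$ is proper and convex (properness comes from the parametric minimization theorem you invoke), so it is continuous on $U$ and $\partial p(\bar x)\neq\emptyset$.
\item Hence $\partial p(\bar x)=\{\nabla_x\phi(\bar x,\bar\omega)\}$. A convex function with a singleton subdifferential at an interior point is differentiable there, and then automatically strictly differentiable.
\end{itemize}
The same observation is what kills the horizon set in the Rockafellar--Wets route, because $\partial^\infty\phi=N_{\dom\phi}$ and $\dom\phi\supseteq U\times\{\bar\omega\}$.

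With this fix, your argument is self-contained and needs no nonconvex machinery. Because your identity holds for \emph{any} minimizer, it also shows that uniqueness of $\bar\omega$ is superfluous under convexity. Uniqueness matters only in the nonconvex setting, where it collapses the union over the argmin set in the general subgradient inclusion to a single set.
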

There are three main assumptions that have to be verified, namely local level-boundedness, unique minimizers with respect to the slack variable, and the partial differentiability condition. In the case of $\gC^1$ log-concave likelihoods plus Gaussian noise, the first two are satisfied due to the strong convexity of the Gaussian log-likelihood. The last condition can be enforced using a change of variables, as will be demonstrated for Laplace--Gaussian noise. This implies that finding the gradient of the infimal convolution is equivalent to finding a (unique) minimizer of the convolution.

In the additive Gaussian noise case, the infimal convolution can be interpreted as a Moreau envelope evaluated at $y$. More concretely, let $f_1(Ax; y)$ be the convex negative log-likelihood of the process $z(x)$, and let $f_2(\omega) = \frac{1}{2\sigma^2} \|\omega\|^2$ be the Gaussian negative log-likelihood. We further assume that $f_1(Ax;y)$  is either differentiable in $x$, or of the residual form $f_1(Ax,y) = f_1(y-Ax)$. Applying the change of variables $v = y-\omega$, the interior term of \labelcref{eq:nestedExplicit} becomes
\begin{align}
    \psi(x) \coloneqq \min_{\omega} \left[f_1(Ax;y - \omega) + f_2(\omega)\right] &= \min_{v} \left[f_1(Ax;v)+ \frac{1}{2\sigma^2} \|y-v\|^2\right] \label{eq:modification}\\
    &= \gM_{\sigma^2 f_1(Ax, \cdot)}(y), \notag
\end{align}
where $\gM$ denotes the Moreau envelope of the function. If $f_1$ is differentiable in $x$, then by \Cref{prop:diffUnderMin}, the derivative of the infimal convolution $\psi(x) = [f_1(Ax, \cdot) \infconv f_2](y)$ is 
\begin{align*}
    \nabla_x \psi(x) = A^\top \partial_{Ax} f_1(\prox_{\sigma^2 f_1(Ax, \cdot)}(y), Ax).
\end{align*}

An example of this is Poisson--Gaussian noise, where $z(x) \sim \mathrm{Pois}(Ax)$. The fidelity term corresponding to Poisson noise is (up to some constants)
\begin{equation}\label{eq:poissonDiscreteLLhood}
    -\log p(v| Ax) = \sum_i (Ax)_i - v_i \log(Ax)_i + \log (v_i!) + \iota_{v_i (Ax)_i \ge 0}(v).
\end{equation}
where $\iota_C$ is the indicator function of a convex set $C \subset \R^n$, satisfying $\iota_C(x) = 0$ if $x \in C$, and $\iota_C(x) = +\infty$ otherwise. As in \cite{calatroni2017infimal}, we have to use Stirling's approximation $\log v! \approx v\log v - v$ to pass into continuous $v$. The fidelity used for the Poisson noise process is thus the continuous approximation
\begin{equation*}
    f_1(Ax; v) \coloneqq \KL(v, Ax) = \sum_i v_i\log \frac{v_i}{(Ax)_i} - v_i + (Ax)_i.
\end{equation*}
\begin{remark}
    Typical first-order methods addressing Poisson noise discard the $\log (v_i!)$ term in \labelcref{eq:poissonDiscreteLLhood}, as it disappears when differentiating with respect to $x$ \cite{hurault2023bregman,melidonis2023efficient}. However, since we need to find the minimizer of the convolution functional $f_1(Ax;v) + f_2(y-v)$ with respect to $v$, this can not be canceled, and needs to be justified with an approximation using Stirling's formula.
\end{remark}

For an $x$ with $Ax > 0$ componentwise, the generalized fidelity satisfies $\psi(x) = \min_v \left[\KL(v \|Ax) + \frac{1}{2\sigma^2}\|y-v\|^2\right]$. The unique $\bar v$ attaining the minimizer satisfies componentwise
\begin{equation}\label{eq:internalOpti}
    \log \frac{\bar{v}_i}{(Ax)_i}+ \frac{1}{\sigma^2}(\bar v-y)_i = 0,
\end{equation}
and the gradient of the fidelity $\nabla \psi(x)$ is given by 
\begin{equation}\label{eq:PoissonGaussianGrad}
    \nabla_x\left[ \KL(\bar v \| Ax) + \frac{1}{2\sigma^2}\|y-\bar v\|^2\right] = A^\top \left(\mathbf{1} - \left[\frac{\bar v_i}{(Ax)_i}\right]_i\right).
\end{equation}
The gradient of the Poisson--Gaussian fidelity term depends on computing the optimal slack variable $\bar v$ in \labelcref{eq:internalOpti}. In this case, the separability turns this into a collection of 1D root finding problems, which can be very efficiently vectorized and solved using standard methods such as Newton's method. Adding regularization parameters $(\lambda_p, \lambda_g)$ for the Poisson and Gaussian components yields the following expressions for the infimal convolution of Poisson--Gaussian noise,
\begin{subequations}\label{eqs:PoissonGaussianGrad}
    \begin{gather}
        \psi(x) = \min_v \left[\lambda_p\KL(v \|Ax) + \frac{\lambda_g}{2}\|y-v\|^2\right],\\
        \lambda_p\log \frac{\bar{v}_i}{(Ax)_i}+ \lambda_g(\bar v-y)_i = 0,\\
        \nabla\psi(x)  = \lambda_p A^\top \left( \mathbf{1} - \left[\frac{\bar v_i}{(Ax)_i}\right]_i\right).
    \end{gather}
\end{subequations}

The other scenario arises if $f_1$ is of the residual form $f_1(Ax;y) = f_1(y-Ax)$, and is convex but not necessarily differentiable. The residual form is equivalent to $z(x)$ being an additive noise process. In this case, one has to move the $Ax$ term onto the differentiable $f_2$ term. The infimal convolution fidelity term \labelcref{eq:modification} can equivalently be written as 
\begin{equation*}
    \psi(x) = \min_{w} \left[f_1(w) + \frac{1}{2\sigma^2} \|y-Ax-w\|^2\right].
\end{equation*}
This reformulation allows for use of \Cref{prop:diffUnderMin}, as the objective in this minimization is differentiable in $x$, and furthermore admits a unique minimizer $w$ for any fixed $x$ (due to strict convexity of the square term). An example is Laplace--Gaussian noise, where $z(x) = Ax + w$ where $p(w) \propto \exp(-\lambda_l\|w\|_1)$ is Laplace-distributed. The fidelities can be given by 
\begin{equation*}
    f_1(Ax,z) = \lambda_l\|z-Ax\|_1,\quad f_2(\omega) = \frac{\lambda_g}{2}\|\omega\|_2^2,
\end{equation*}
where $\lambda_l, \lambda_g>0$ are some scale parameters, which can be tuned for PnP. Then, the infimal convolution is simply
\begin{equation}\label{eq:LaplaceGaussianFidelity}
    \psi(x) = \min_w \left[\lambda_l\|w\|_1 + \frac{\lambda_g}{2}\|y-Ax-w\|^2\right].
\end{equation}
The minimizer is attained at the easily computable $\ell_1$-proximal
\begin{equation*}
    \bar w = \prox_{\frac{\lambda_l}{\lambda_g}\|\cdot \|_1}(y-Ax),
\end{equation*}
with the gradient of the infimal convolution following
\begin{equation}\label{eq:LaplaceGaussianFidelityGrad}
    \nabla_x \psi(x) = \lambda_g A^\top \left(Ax + \bar w - y\right).
\end{equation}

This discussion shows that for the case where $f_2$ corresponds to Gaussian additive noise, the infimal convolution fidelity is differentiable if (i) $f_1$ is differentiable in $x$, or (ii) takes the residual form $f_1(y, Ax) = f_1(y-Ax)$ arising from additive noise. Poisson--Gaussian and Laplace--Gaussian mixed noise are representative examples of each condition, with particularly simple gradients of the infimal convolution, with the former only requiring some additional 1D root finding. In principle, it is possible to extend this computation to other combinations of log-concave noise satisfying suitable regularity conditions, such that the assumptions of \Cref{prop:diffUnderMin} hold. 

At this point, the infimal convolution fidelity can be directly applied into PnP-like schemes utilizing one denoiser. In the next section, we detail how using two regularization terms can be done using a three-operator splitting.

\section{Davis--Yin splitting and multiple denoiser priors}\label{sec:DYS}
Two-operator splittings are used to target composite optimization problems of the form $\min( f+g)$ with one regularization $g$. Using two regularizations $g,h$ means to solve a three-term composite optimization problem of the form $\min (f+g+h)$. The Davis--Yin splitting does precisely this: for proper closed convex functions $f,g,h$ with $f$ being differentiable with $L$-Lipschitz gradient, the minimization problem can be solved using a fixed point iteration on the operator
\begin{equation}\label{eq:DYSOperator}
    \gT \coloneqq \Id - \prox_{\gamma h} + \prox_{\gamma g} \circ\ (2 \prox_{\gamma h} - \Id - \gamma \nabla f \circ \prox_{\gamma h}),
\end{equation}
where $\gamma \in (0, 2/L)$ is a step size. A standard approach is to use a relaxation parameter $\eta \in (0,1]$, and perform the Krasnosel'ski\u{\i}--Mann iteration
\begin{equation}\label{eq:KrasnoselskiiMann}
    w^{k+1} = (1-\eta) w^{k} + \eta \gT w^k.
\end{equation}
It can be shown that under suitable conditions on $\gamma$ and $\eta$, the fixed point iteration $z_{k+1} = \gT(z_k)$ converges to an element $z_*$ satisfying $\prox_{\gamma h} (z_*) \in \zer \partial (f+g+h)$. The concept of using multiple priors has been previously explored in works such as \cite{rond2016poisson}, which employed an alternating scheme for Poisson denoising, albeit without convergence guarantees. 

Following the PnP framework, we substitute the proximal operators $\prox_{\gamma g}$ and $\prox_{\gamma h}$ with suitable pretrained denoisers $\gD_{1}$ and $\gD_{2}$, respectively. This leads to an iteration scheme that can handle two regularization terms and one smooth data-fidelity term simultaneously. For a differentiable fidelity function $\psi(x)$, the \textit{PnP-DYS} method with step size $\gamma < 2/\operatorname{Lip}(\nabla \psi)$ and relaxation $\eta \in (0,1]$ is given by the iterations
\begin{equation}\label{eq:pnp-dys}
\begin{cases}
    x^{k+1} = \gD_{1}(w^k), \\
    u^{k+1} = \gD_{2}(2x^{k+1}-w^k-\gamma\nabla \psi(x^{k+1})), \\
    w^{k+1} = w^k + \eta (u^{k+1}-x^{k+1}).
\end{cases}
\end{equation}
In addition to evaluating the denoiser, this additionally requires the gradient of the fidelity term $\psi$. For typical PnP applications, the fidelity is simply the squared $\ell_2$ distance, which has a simple gradient assuming $A^\top$ is easy to compute. For infimal convolution fidelities arising from mixed-noise processes, derivatives can be computed according to \Cref{prop:diffUnderMin}. We present the PnP-DYS method specialized for Poisson--Gaussian noise and Laplace--Gaussian noise in Algorithm \ref{alg:pnp-dys}.

\begin{algorithm}[H]
\renewcommand{\algorithmicrequire}{\textbf{Input:}}
\renewcommand{\algorithmicensure}{\textbf{Output:}}
\caption{Infimal Convolution Plug-and-Play Davis--Yin Splitting (PnP-DYS)}
\label{alg:pnp-dys}
\begin{algorithmic}[1]
\REQUIRE Corrupted image $y$, linear forward operator $A$, step size $\gamma > 0$, relaxation $\eta \in (0, 1]$, regularization parameters $\lambda_g$, $\lambda_l$ (Laplace) or $\lambda_p$ (Poisson), infimal convolution fidelity $\psi$, iteration count $K_{\max}$, pretrained denoisers $\gD_1, \gD_2$.
\ENSURE Restored image
\STATE Initialize $w^0$
\FOR{$k = 0, 1, 2, \dots, K_{\max}-1$}
    \STATE $x^{k+1} = \gD_{1}(w^k)$
    \STATE \COMMENT{\textit{$\vartriangleright$ Compute gradient of infimal convolution $\nabla \psi(x^{k+1})$}}
    \IF{Laplace--Gaussian noise}
        \STATE \[
        \nabla\psi(x^{k+1}) = -\lamg A^\top\left(\Id - \prox_{\frac{\laml}{\lamg}\norm{\cdot}_1}\right)(y - Ax^{k+1})
        \]
    \ELSIF{Poisson--Gaussian noise} 
    \STATE Initialize ${ v}^0 = Ax^{k+1}$
    \FOR{$m = 0,1,\dots,M-1$} 
        \STATE ${v}^{m+1}_i = {v}^m_i - \frac{\lamp \log({ v}^m_i/[Ax^{k+1}]_i) + \lamg({v}^m_i - y_i)}{\lamp/{v}^m_i + \lamg}$, $\forall i$ \hfill\COMMENT{\textit{$\vartriangleright$ Newton iteration}}
    \ENDFOR
    \STATE \[
    \nabla \psi(x^{k+1}) = \lamp A^\top \left( \mathbf{1} - \frac{{v^M}(x^{k+1})}{Ax^{k+1}} \right)
    \]
    \ENDIF
    \STATE $u^{k+1} = \gD_{2}\left( 2x^{k+1} - w^k - \gamma \nabla \psi(x^{k+1}) \right)$
    \STATE $w^{k+1} = w^k + \eta(u^{k+1} - x^{k+1})$
\ENDFOR
\STATE return $x^K$ 
\end{algorithmic}
\end{algorithm}

\section{Convergence Analysis}\label{sec:convergence}
To analyze the fixed-point convergence of PnP methods, a common strategy is to use a particular proximal characterization of the gradient-step denoiser, relaxing the convex composite optimization problem into a weakly convex problem. This is then followed by using a \KLz (KL) condition to conclude convergence to a stationary point of some functional \cite{hurault2022proximal,gribonval2020characterization,tan2025solving}. In this section, we demonstrate that a similar convergence holds for PnP-DYS methods, under standard conditions on the parameters of the Davis--Yin splitting. Furthermore, we show that the proposed infimal convolutions satisfy the assumptions for fixed-point convergence. This gives a characterization of the cluster points in terms of a non-convex functional, implicitly defined by the denoisers.

We consider two gradient step denoisers $\gD_{i}$ for $i=1,2$ \cite{hurault2021gradient}. These are defined by the gradient formulation
\begin{equation}\label{eq:FormGSDenoiser}
\gD_{i} = \Id - \nabla g_{i},\quad g_{i}(x) = \frac{1}{2}\norm{x-N_{i}(x)}^2,\quad i=1,2,
\end{equation}
where $g_{i}:\Rn\rightarrow \R$ are $\gC^2$ functions with $L$-Lipschitz gradient and $L<1$, and $N_{i}:\Rn\to\Rn$ are parameterized by $\gC^2$ neural networks, enforceable through having a $\gC^2$ activation function. Under the Lipschitz condition on $\nabla g_{i}$, the denoisers $\gD_{i}$ can be expressed as proximal mappings of a weakly convex function, as described in the following proposition.

\begin{proposition}[{\cite[Prop. 1]{hurault2023relaxed}, \cite[Thm. 1]{gribonval2020characterization}}]\label{prop:weakly-convex}
Suppose that $\gD = \Id - \nabla g$, where $g:\Rn\rightarrow \R$ is a $\gC^2$ function, with $\nabla g$ being $L_g$-Lipschitz with $L_g<1$. Then  $\gD$ takes the proximal form $\gD=\prox_{\phi}$, where
\begin{equation}\label{eq:phi-function}
 \phi=
\begin{cases}
    g(\gD^{-1}(x))-\frac{1}{2}\norm{\gD^{-1}(x)-x}^2, &\text{if } x\in \Im(\gD), \\
   +\infty, &\text{otherwise.}
\end{cases}
\end{equation}
where $\phi$ is a $\frac{L_g}{L_g+1}$-weakly convex function. Moreover, $\phi$ is differentiable on $\Im(\gD)$, and its gradient $\nabla \phi$ is $\frac{L_g}{1-L_g}$-Lipschitz.
\end{proposition}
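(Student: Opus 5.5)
We prove \Cref{prop:weakly-convex} in four steps: invertibility of $\gD$, the proximal identity, weak convexity of $\phi$, and regularity of $\phi$ on $\Im(\gD)$.

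\textbf{Invertibility of $\gD$.} Since $\nabla g$ is $L_g$-Lipschitz with $L_g<1$, the map $\gD=\Id-\nabla g$ is strongly monotone:
\begin{equation*}
\iprod{\gD(x)-\gD(x')}{x-x'}\ge (1-L_g)\norm{x-x'}^2 .
\end{equation*}
Hence $\gD$ is injective, and $\gD^{-1}$ is $\frac{1}{1-L_g}$-Lipschitz on $\Im(\gD)$. Moreover $J\gD=\Id-\nabla^2 g$ is symmetric with eigenvalues in $[1-L_g,1+L_g]$, so it is positive definite. By the inverse function theorem, $\gD$ is a $\gC^1$ diffeomorphism onto the open set $\Im(\gD)$. (In fact $\Im(\gD)=\Rn$ by Minty/Browder, since $\gD$ is continuous and strongly monotone, but this is not needed.)

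\textbf{Proximal identity.} Introduce $\Psi(x)=\frac12\norm{x}^2-g(x)$. Then $\nabla\Psi=\gD$, and $\Psi$ is $(1-L_g)$-strongly convex, since $\nabla^2\Psi\succeq(1-L_g)\Id$. Any map of the form $\prox_\phi$ is the gradient of the convex function $\frac12\norm{\cdot}^2-\mathcal{M}_\phi$. Conversely, following Gribonval--Nikolova, we define $\phi$ through the Fenchel conjugate:
\begin{equation*}
\phi(u)=\Psi^*(u)-\tfrac12\norm{u}^2 .
\end{equation*}
We then check that $\prox_\phi(y)\ni\gD(y)$, i.e.\ that $u=\gD(y)$ minimizes $\frac12\norm{u-y}^2+\phi(u)=\Psi^*(u)-\iprod{u}{y}+\frac12\norm{y}^2$. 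This objective is convex in $u$, and its minimizers are exactly $\partial\Psi(y)=\{\gD(y)\}$ by conjugate duality. So the minimizer is unique and equals $\gD(y)$.

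Next we compute $\phi$ explicitly. For $u=\gD(y)$, Fenchel--Young equality gives $\Psi^*(u)=\iprod{u}{y}-\Psi(y)$. Substituting and simplifying yields
\begin{equation*}
\phi(u)=g(y)-\tfrac12\norm{y-u}^2 ,\qquad y=\gD^{-1}(u),
\end{equation*}
which is formula \eqref{eq:phi-function}. Outside $\Im(\gD)$ we set $\phi=+\infty$, as in the statement. This does not change the proximal identity, because minimizers always lie in $\Im(\gD)$.

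\textbf{Weak convexity.} The conjugate of a $(1-L_g)$-strongly convex function has $\frac{1}{1-L_g}$-Lipschitz gradient, so $\Psi^*$ is $\gC^1$ with $\nabla\Psi^*=\gD^{-1}$. On $\Im(\gD)$ the Hessian of $\Psi^*$ is $(J\gD)^{-1}$, whose eigenvalues lie in $[\frac{1}{1+L_g},\frac{1}{1-L_g}]$. Therefore
\begin{equation*}
\nabla^2\phi=(J\gD)^{-1}-\Id ,
\end{equation*}
with eigenvalues in $[\frac{1}{1+L_g}-1,\frac{1}{1-L_g}-1]=[-\frac{L_g}{1+L_g},\frac{L_g}{1-L_g}]$. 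The lower bound shows that $\phi$ is $\frac{L_g}{1+L_g}$-weakly convex. The upper bound, together with $\nabla\phi=\gD^{-1}-\Id$, shows that $\nabla\phi$ is $\frac{L_g}{1-L_g}$-Lipschitz: the symmetric Hessian has operator norm at most $\max\{\frac{L_g}{1+L_g},\frac{L_g}{1-L_g}\}=\frac{L_g}{1-L_g}$.

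\textbf{Main obstacle.} The delicate step is justifying the Lipschitz and weak-convexity bounds globally from these Hessian bounds. Pointwise Hessian bounds give local estimates only; to globalize them we need $\Im(\gD)$ to be convex. This is why we establish surjectivity of $\gD$ (strong monotonicity plus continuity), so that $\Im(\gD)=\Rn$. With that in hand, $\phi$ is $\gC^1$ on all of $\Rn$, and the mean value theorem along segments yields the global bounds.
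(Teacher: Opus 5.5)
Your proof is correct, and it uses the same construction that underlies the paper's citation (the paper itself gives no proof and defers to Hurault et al.\ and Gribonval--Nikolova). Writing $\gD=\nabla\Psi$ with $\Psi=\frac12\norm{\cdot}^2-g$ strongly convex, and setting $\phi=\Psi^*-\frac12\norm{\cdot}^2$, is exactly the Gribonval--Nikolova formula. Your Fenchel--Young computation then recovers \eqref{eq:phi-function}.

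Where you genuinely differ is in how you get the two quantitative bounds. You derive both from the two-sided Hessian estimate $\nabla^2\phi(u)=\bigl(J\gD(\gD^{-1}(u))\bigr)^{-1}-\Id$, with spectrum in $[-\frac{L_g}{1+L_g},\frac{L_g}{1-L_g}]$. Pointwise bounds like this must be globalized along segments, so you need the image of $\gD$ to be convex.

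The cited argument avoids this dependence on the domain, and gets each bound directly:
\begin{itemize}
\item \emph{Weak convexity.} Since $\gD=\nabla\Psi$ is $(1+L_g)$-Lipschitz, $\Psi^*$ is $\frac{1}{1+L_g}$-strongly convex. Hence $\phi+\frac{L_g}{2(1+L_g)}\norm{\cdot}^2=\Psi^*-\frac{1}{2(1+L_g)}\norm{\cdot}^2$ is convex. This is Gribonval--Nikolova's Lipschitz criterion.
\item \emph{Lipschitz gradient.} One has $\nabla\phi=\gD^{-1}-\Id=\nabla g\circ\gD^{-1}$, which composes an $L_g$-Lipschitz map with the $\frac{1}{1-L_g}$-Lipschitz map $\gD^{-1}$. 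The resulting bound $\frac{L_g}{1-L_g}$ holds for any two points of the image, with no segment argument.
\end{itemize}
Your Hessian route gives slightly more, namely $\phi\in\gC^2$ with explicit curvature bounds, but it genuinely needs surjectivity.

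Two small fixes follow from this:
\begin{itemize}
\item The parenthetical ``but this is not needed'' in your first step contradicts your final paragraph and should be removed.
\item You do not need Minty--Browder; surjectivity already follows from your second step. Strong convexity of $\Psi$ makes $\Psi^*$ finite on all of $\Rn$, so $\partial\Psi^*(u)\neq\emptyset$ for every $u$, and any $y\in\partial\Psi^*(u)$ satisfies $\gD(y)=u$.
\end{itemize}
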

To relate this proximal characterization to the PnP-DYS iterations \labelcref{eq:pnp-dys}, it is natural to consider the non-convex objective
\begin{equation}\label{eq:proxyF}
    F(x) \coloneqq \psi(x) + \frac{1}{\gamma} \phi_{1}(x)+ \frac{1}{\gamma} \phi_2(x).
\end{equation}
Formally applying the DYS iterations \labelcref{eq:DYSOperator,eq:KrasnoselskiiMann} to this objective \labelcref{eq:proxyF} with step size $\gamma$ yields the PnP-DYS iteration \labelcref{eq:pnp-dys}. Since the $\phi_i$ defined through the denoiser will be weakly convex functions, convergence to a minimizer is not guaranteed. A natural weaker objective is to check whether or not the PnP-DYS iterations converge to critical points of $F$, which can be done using the KL condition. For a nonconvex function, the KL property converts a sufficient decrease condition of a sequence into convergence. Moreover, the convergence is fast in the sense of having finite length.   

\begin{definition}[\KLz property]\label{def:KL}
    We say that a proper closed function $f$ satisfies the \KLz property at $\xsol\in \dom\partial f$ if there exists a neighborhood $V$ of $\xsol$, $\delta\in(0,\infty)$ and a continuous concave function $\Psi:[0,\delta)\rightarrow \sR_+$ with $\Psi(0)=0$ such that
\begin{enumerate}[label={\rm (\roman{*})}]
 \item $\Psi$ is continuous on $[0,\delta)$, and continuously differentiable on $(0,\delta)$ with $\Psi'>0$;
 \item For all $x\in V$ with $f(\xsol)<f(x)<f(\xsol)+\delta$, one has
 \[
 \Psi'(f(x)-f(\xsol))\dist(0,\partial f)\geq 1.
 \]
\end{enumerate}
A proper closed function $f$ satisfying the \KLz property at all points in $\dom\partial f$ is called a \KLz function.
\end{definition}
In the context of PnP algorithms, neural networks can be parameterized to satisfy the KL condition, such as using networks with piecewise polynomial activations, or analytic activations such as Softplus. For other sufficient conditions to guarantee the KL condition, we refer to \cite[Sec. 4.2]{tan2025solving}.

In order to show convergence to a fixed point, we first need to show a descent condition on the PnP-DYS iterates. This is given by the following proposition. Proofs of following results are given in \Cref{sec:proofs}.

\begin{proposition}\label{prop:descent}
Consider applying the relaxed Davis--Yin splitting to the nonconvex function \labelcref{eq:proxyF} with step size $\gamma>0$ and relaxation $\eta \in (0,1]$, i.e. the iterations
\begin{equation}\label{maineq:pnp-dys-prox}
\begin{cases}
    x^{k+1} = \prox_{\phi_1}(w^k), \\
    u^{k+1} = \prox_{\phi_2}(2x^{k+1}-w^k-\gamma\nabla \psi(x^{k+1})), \\
    w^{k+1} = w^k + \eta (u^{k+1}-x^{k+1}).
\end{cases}
\end{equation}
Assume the following conditions: 
\begin{enumerate}
    \item $\nabla\psi$ is $\beta$-Lipschitz,
    \item $\phi_1$ is $l_{\phi}$-weakly convex with $l_{\phi} \in [0,1)$, and moreover has $L_{\phi}$-Lipschitz gradient,
    \item $\phi_2$ is proper and closed with $\prox_{\phi_2}$ being nonempty.
\end{enumerate} 

Define the PnP-DYS energy function associated with
\eqref{eq:proxyF} by
\beq\label{eq:energy_func}
\begin{aligned}
\Theta_{\gamma,\eta}(x,u,w)
\coloneqq &\;
\psi(x) + \frac{1}{\gamma}\phi_1(x)
+
\frac{1}{\gamma}\phi_2(u) +
\frac{1}{2\gamma}
\norm{2x-u-w-\gamma\nabla\psi(x)}^2\\
&-
\frac{1}{2\gamma}
\norm{w-x+\gamma\nabla\psi(x)}^2
-
\frac{\eta}{\gamma}\norm{x-u}^2 .
\end{aligned}
\eeq
Further define the following descent constant
% \beq
\begin{equation}\label{eq:Gamma}
    \Lambda(\gamma, \eta) \coloneqq \frac{1}{\gamma} \left[-\frac{1 + l_\phi}{2} + \frac{1 - L_\phi^2}{\eta}\right] - \beta \left[1 + \frac{(1 - \eta + L_\phi)^2}{2\eta^2}\right].
\end{equation}
If $\Lambda(\gamma, \eta)>0$, then for all $k\geq 1$, the descent on $\Theta_{\gamma,\eta}$ holds
\begin{equation*}
\Theta_{\gamma,\eta}(x^{k+1},u^{k+1},w^{k+1})
-
\Theta_{\gamma,\eta}(x^k,u^k,w^k)
\leq
-\Lambda(\gamma,\eta)\norm{x^{k+1}-x^k}^2 .
\end{equation*}
\end{proposition}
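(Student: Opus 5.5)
The plan is to follow the standard Lyapunov analysis for nonconvex Davis--Yin splitting (in the spirit of Bian--Zhang / Liu--Yin). I would split the change $\Theta_{\gamma,\eta}(x^{k+1},u^{k+1},w^{k+1})-\Theta_{\gamma,\eta}(x^k,u^k,w^k)$ into three partial updates: first $u$ (with $x=x^k$ and $w=w^{k-1}$ held fixed), then $x$, then $w$. The aim is to bound every resulting residual by $\norm{x^{k+1}-x^k}^2$.

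\textbf{Optimality conditions.} These are the basic tools.

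\begin{itemize}
\item Since $\phi_1$ is differentiable, $x^{k+1}=\prox_{\phi_1}(w^k)$ gives the exact relation
\[
w^k = x^{k+1}+\nabla\phi_1(x^{k+1}).
\]
Together with the $L_\phi$-Lipschitz gradient, this yields $\norm{w^k-w^{k-1}}\le (1+L_\phi)\norm{x^{k+1}-x^k}$.
\item The $w$-update gives $u^{k+1}-x^{k+1}=(w^{k+1}-w^k)/\eta$. Combined with the previous point, $\norm{u^{k+1}-x^{k+1}}$ is controlled by consecutive $x$-differences.
\item For $u^{k+1}=\prox_{\phi_2}(z^{k+1})$ with $z^{k+1}=2x^{k+1}-w^k-\gamma\nabla\psi(x^{k+1})$, only minimality is available, since $\phi_2$ is merely proper and closed. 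I would compare against the candidate $u^k$:
\[
\phi_2(u^{k+1})+\tfrac12\norm{u^{k+1}-z^{k+1}}^2\le \phi_2(u^k)+\tfrac12\norm{u^k-z^{k+1}}^2 .
\]
Dividing by $\gamma$ matches the $\tfrac1\gamma\phi_2(u)+\tfrac1{2\gamma}\norm{2x-u-w-\gamma\nabla\psi(x)}^2$ block of $\Theta_{\gamma,\eta}$. This is why that quadratic appears in the energy.
\end{itemize}

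\textbf{Descent in $x$.} Here I use two ingredients: the descent lemma for $\psi$, namely $\psi(x^{k+1})\le\psi(x^k)+\iprod{\nabla\psi(x^k)}{x^{k+1}-x^k}+\tfrac\beta2\norm{x^{k+1}-x^k}^2$, and the $l_\phi$-weak convexity of $\phi_1$, namely $\phi_1(x^k)\ge\phi_1(x^{k+1})+\iprod{\nabla\phi_1(x^{k+1})}{x^k-x^{k+1}}-\tfrac{l_\phi}2\norm{x^k-x^{k+1}}^2$. In the latter I substitute $\nabla\phi_1(x^{k+1})=w^k-x^{k+1}$. This is where the term $-\tfrac{1+l_\phi}{2\gamma}$ of $\Lambda$ should come from.

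\textbf{Descent in $w$.} The two remaining squared norms in $\Theta_{\gamma,\eta}$, together with $-\tfrac\eta\gamma\norm{x-u}^2$, are expanded using the polarization identity $\norm a^2-\norm b^2=\iprod{a-b}{a+b}$. Every difference is then rewritten via $w^{k+1}-w^k=\eta(u^{k+1}-x^{k+1})$. The inner products with $\nabla\psi(x^{k+1})-\nabla\psi(x^k)$ are bounded using $\beta$-Lipschitzness and Young's inequality, producing the terms $\beta[1+\cdots]$. The $\tfrac{1-L_\phi^2}{\eta\gamma}$ term should arise from the cocoercivity-type quantity $\norm{w^k-w^{k-1}}^2-\norm{\nabla\phi_1(x^{k+1})-\nabla\phi_1(x^k)}^2$. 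Indeed, $\iprod{x^{k+1}-x^k}{w^k-w^{k-1}}$ splits as $\norm{\Delta x}^2+\iprod{\Delta x}{\Delta\nabla\phi_1}$, and the Lipschitz bound enters with a factor $1/\eta$.

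\textbf{Main obstacle.} The hard part is the final bookkeeping. All cross terms, including $u^k-x^k=(w^k-w^{k-1})/\eta$, must be written so that one obtains exactly the coefficient $\tfrac{(1-\eta+L_\phi)^2}{2\eta^2}$. I expect this to come from writing
\[
u^{k+1}-x^{k+1} = \tfrac1\eta\bigl[(1-\eta)(x^{k+1}-x^k)+\ldots\bigr]
\]
and using the Lipschitz bound on $\nabla\phi_1$, which gives a residual of norm at most $\tfrac{1-\eta+L_\phi}{\eta}\norm{x^{k+1}-x^k}$, and then applying Young's inequality with weight $\beta$. I would first carry out the computation with $\eta=1$, where the structure is cleanest, and then track the $\eta$-dependence. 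Collecting all coefficients of $\norm{x^{k+1}-x^k}^2$ should give $-\Lambda(\gamma,\eta)$. The restriction $k\ge1$ is needed because the previous iterates $u^k,w^{k-1}$ must exist.
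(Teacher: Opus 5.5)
Your ingredients and overall route are the paper's. The paper obtains the partial-update estimate you want to rederive (prox minimality of $u^{k+1}$ against the candidate $u^k$, and weak convexity of $\phi_1$ with $\nabla\phi_1(x^{k+1})=w^k-x^{k+1}$) by citing \cite[Lem.~3.3, Eq.~A.4]{bian2021three}. It then handles the $\psi$-terms by the descent lemma, Lipschitz continuity and Young's inequality. It finishes with bookkeeping in $\delta=x^{k+1}-x^k$, $d=\nabla\phi_1(x^{k+1})-\nabla\phi_1(x^k)$ and $s=w^k-w^{k-1}=\delta+d=\eta(u^k-x^k)$. However, your sketch of that bookkeeping is wrong in two places, and the second would stop the argument if followed literally.

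First, the order of the partial updates. Starting from the state $(x^k,u^k,w^k)$, you must update $x$ first, with $u^k,w^k$ fixed; this is what makes $\nabla\phi_1(x^{k+1})=w^k-x^{k+1}$ usable. Then update $u$, with $x^{k+1},w^k$ fixed; this is where your comparison at $z^{k+1}$ lives. Then update $w$. Your "$u$ first with $x=x^k$, $w=w^{k-1}$" does not connect $(x^k,u^k,w^k)$ to $(x^{k+1},u^{k+1},w^{k+1})$.

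Second, and more seriously, you must not try to bound $\|u^{k+1}-x^{k+1}\|$ by $x$-differences. Since $u^{k+1}-x^{k+1}=(w^{k+1}-w^k)/\eta$, it is governed by $x^{k+2}-x^{k+1}$. A leftover multiple of $\|u^{k+1}-x^{k+1}\|^2$ can therefore never be absorbed into $-\Lambda\|x^{k+1}-x^k\|^2$. Instead, these terms cancel exactly. $\Theta_{\gamma,\eta}$ is affine in $w$ with $\nabla_w\Theta_{\gamma,\eta}=(u-x)/\gamma$, so the $w$-step adds exactly $\frac{\eta}{\gamma}\|u^{k+1}-x^{k+1}\|^2$. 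The $-\frac{\eta}{\gamma}\|x-u\|^2$ term removes it in the $u$-step; that is why this term is in the energy. What survives is
\[
-\frac{1-l_\phi}{2\gamma}\|\delta\|^2+\beta\|\delta\|^2+\Bigl(\frac1\gamma+\frac\beta2\Bigr)\|x^{k+1}-u^k\|^2-\frac{1-\eta}{\gamma}\|x^k-u^k\|^2 .
\]
The identity you need is $u^k-x^{k+1}=\frac1\eta\bigl[(1-\eta)\delta+d\bigr]$, with $u^k$ rather than $u^{k+1}$, together with $\|x^k-u^k\|=\|s\|/\eta$. With these, the $1/\gamma$-part equals exactly $\frac1\gamma\|\delta\|^2+\frac{1}{\gamma\eta}\bigl(\|d\|^2-\|\delta\|^2\bigr)$, since the cross terms cancel. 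The $\beta$-part equals $\frac{\beta}{2\eta^2}\|(1-\eta)\delta+d\|^2$. Applying $\|d\|\le L_\phi\|\delta\|$ then gives $-\Lambda(\gamma,\eta)\|\delta\|^2$.

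This also corrects two attributions in your sketch:
\begin{itemize}
\item $\frac{1-L_\phi^2}{\gamma\eta}$ comes from $\|\delta\|^2-\|d\|^2$, not from a cocoercivity-type quantity involving $\|s\|^2$.
\item Weak convexity alone contributes only $-\frac{1-l_\phi}{2\gamma}\|\delta\|^2$. Combined with the $+\frac1\gamma\|\delta\|^2$ from the expansion above, it yields $+\frac{1+l_\phi}{2\gamma}\|\delta\|^2$, which is the $-\frac{1+l_\phi}{2\gamma}$ in $\Lambda$.
\end{itemize}
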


A consequence of the descent lemma is convergence to a cluster point, as given by the KL condition.

\begin{theorem}\label{thm:global_convergence}
Let $\{(x^k,u^k,w^k)\}_{k\geq 0}$ be the sequence generated by the
PnP-DYS iteration \eqref{maineq:pnp-dys-prox}. Suppose that the
assumptions of \Cref{prop:descent} hold and that $\gamma>0$ and
$\eta\in(0,1]$ are chosen such that $\Lambda(\gamma,\eta)>0$.

Assume further that the sequence $\{(x^k,u^k,w^k)\}_{k\geq 0}$ is bounded, and $\Theta_{\gamma,\eta}$ is a KL function, then the sequence $\{(x^k,u^k,w^k)\}_{k\geq0}$ has finite length, that is,
    \[
    \sum_{k=0}^{\infty}\norm{x^{k+1}-x^k}<+\infty,
    \qquad
    \sum_{k=0}^{\infty}\norm{u^{k+1}-u^k}<+\infty,
    \qquad
    \sum_{k=0}^{\infty}\norm{w^{k+1}-w^k}<+\infty.
    \]
In particular, the sequences $(x^k)$ and $(u^k)$ converge to a critical point of $F = \psi + \gamma^{-1} \phi_1 + \gamma^{-1} \phi_2$.
\end{theorem}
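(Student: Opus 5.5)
The plan is the standard Attouch--Bolte--Svaiter argument for KL functions applied to $\Theta_{\gamma,\eta}$: sufficient decrease, a relative-error (subgradient) bound, continuity along subsequences, and then the KL inequality. Sufficient decrease comes directly from \Cref{prop:descent}: $\Theta_{\gamma,\eta}(z^{k+1}) \le \Theta_{\gamma,\eta}(z^k) - \Lambda\norm{x^{k+1}-x^k}^2$, where $z^k=(x^k,u^k,w^k)$.

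\textbf{Controlling $u$ and $w$ by $x$.} First I will show that consecutive differences in $u$ and $w$ are controlled by differences in $x$. The first update gives $x^{k+1}=\prox_{\phi_1}(w^k)$. Since $\phi_1$ is differentiable, the optimality condition reads $w^k = x^{k+1}+\nabla\phi_1(x^{k+1})$, so
\[
\norm{w^{k+1}-w^k}\le (1+L_\phi)\norm{x^{k+2}-x^{k+1}}.
\]
The $w$-update then gives $u^{k+1}=x^{k+1}+(w^{k+1}-w^k)/\eta$, so $\norm{u^{k+1}-u^k}$ is bounded by a constant times $\norm{x^{k+2}-x^{k+1}}+\norm{x^{k+1}-x^k}$. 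Because of this, finite length of $(x^k)$ transfers immediately to $(u^k)$ and $(w^k)$.

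\textbf{Relative-error bound.} Next I compute the limiting subdifferential of $\Theta_{\gamma,\eta}$ at $z^{k+1}$, one block at a time.
\begin{itemize}
\item The $w$-block is smooth, with gradient $\frac1\gamma[-(2x-u-w-\gamma\nabla\psi(x)) - (w-x+\gamma\nabla\psi(x))] = \frac1\gamma(u-x)$. This equals $(w^{k+1}-w^k)/(\gamma\eta)$.
\item In the $u$-block, the optimality condition of the $u$-step gives $0\in\partial\phi_2(u^{k+1}) + u^{k+1}-(2x^{k+1}-w^k-\gamma\nabla\psi(x^{k+1}))$. Substituting this into $\frac1\gamma\partial\phi_2(u)$ minus the quadratic terms leaves an element whose size is bounded by $\norm{w^{k+1}-w^k}$ and $\norm{x^{k+1}-u^{k+1}}$.
\item The $x$-block involves $\nabla\psi$, $\nabla\phi_1$, and the Jacobian terms $\nabla^2\psi$ coming from the quadratics. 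This is the delicate point: it requires $\psi$ to be $\mathcal C^2$, or at least that the expression be locally Lipschitz. I will handle it either by assuming twice differentiability, as holds for the GS fidelities, or by noting that the Lipschitz gradient suffices after a Clarke-type bound. Using $\nabla\phi_1(x^{k+1})=w^k-x^{k+1}$, every term becomes a combination of $w^{k+1}-w^k$, $w^k-w^{k-1}$, and $x$-differences.
\end{itemize}
Collecting the three blocks gives
\[
\dist(0,\partial\Theta(z^{k+1}))\le C\bigl(\norm{x^{k+2}-x^{k+1}}+\norm{x^{k+1}-x^k}\bigr).
\]
The shifted indices are handled by the standard two-step variant of the KL argument.

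\textbf{Boundedness below, continuity, and critical points.} Boundedness of the iterates, together with monotonicity, gives that $\Theta$ is bounded below along the sequence, so $\sum\norm{x^{k+1}-x^k}^2<\infty$ and all increments vanish. Let $\bar z$ be a cluster point. I need $\Theta(z^{k_j})\to\Theta(\bar z)$; the only nonsmooth term is $\phi_2(u)$. For this I use the prox minimality, $\phi_2(u^{k+1})+\frac12\norm{u^{k+1}-v^k}^2\le \phi_2(\bar u)+\frac12\norm{\bar u-v^k}^2$, together with lower semicontinuity; this is the usual argument. Passing to the limit in the optimality conditions then shows $\bar x=\bar u$, $\bar w=\bar x+\nabla\phi_1(\bar x)$, and
\[
0\in\nabla\psi(\bar x)+\tfrac1\gamma\nabla\phi_1(\bar x)+\tfrac1\gamma\partial\phi_2(\bar x),
\]
so $\bar x$ is critical for $F$.

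\textbf{KL step.} Finally, $\Theta$ is constant on the cluster set, which is compact and connected. Applying the uniform KL lemma and the concavity inequality
\[
\Psi(\Theta_k-\Theta^*)-\Psi(\Theta_{k+1}-\Theta^*)\ge \Psi'(\cdot)\,\Lambda\norm{x^{k+1}-x^k}^2,
\]
combined with the relative-error bound and AM--GM, yields $\sum\norm{x^{k+1}-x^k}<\infty$. This transfers to $(u^k)$ and $(w^k)$ by the first step, so all three sequences are Cauchy and converge to the critical point identified above.

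The main obstacle is the relative-error bound in the $x$-block, because of the $\nabla\psi$ terms inside the quadratics.
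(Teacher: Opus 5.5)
Your proposal follows essentially the same route as the paper: sufficient decrease from \Cref{prop:descent}, a blockwise relative-error bound from the prox optimality conditions (the paper's \Cref{lem:relative-error}), continuity of $\Theta_{\gamma,\eta}$ along convergent subsequences as in Bian et al., the KL concavity telescoping, transfer of finite length to $(u^k)$ and $(w^k)$ via $\norm{w^k-w^{k-1}}\le(1+L_\phi)\norm{x^{k+1}-x^k}$, and criticality by passing to the limit in the optimality conditions. The paper is leaner in two places: it commits to the generalized-Jacobian bound $\zeta^k+\nabla\psi(x^k)=B^k(u^k-x^k)$ with $\norm{B^k}\le\beta$ (so you should drop the $\mathcal{C}^2$ fallback, which the statement does not assume and which fails for the Huber-type Laplace--Gaussian fidelity), and it evaluates the subgradient at $(x^k,u^k,w^k)$, where all three blocks reduce to multiples of $s^k=w^k-w^{k-1}=\eta(u^k-x^k)$; this gives $\dist(0,\partial\Theta_{\gamma,\eta}(x^k,u^k,w^k))\le C\norm{x^{k+1}-x^k}$, so the KL step telescopes directly without the two-step/AM--GM variant.
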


\Cref{prop:descent} generalizes the analysis of \cite{bian2021three} to the relaxed DYS iterations, namely recovering the unrelaxed Lyapunov function and step size condition on $\gamma$ by setting $\eta = 1$. The different assumptions on the three terms inform the replacement within the PnP framework, with \cite{wu2024extrapolated} replacing the $\prox_{\phi_2}$ term with a deep denoiser to always guarantee convergence for sufficiently small step size, while maintaining an easily computable $\prox_{\phi_1}$ using Tikhonov regularization. Another minor theoretical difference is that \cite[Thm. 3.10]{wu2024extrapolated} imposes a twice-differentiability with bounded Hessian assumption on the fidelity function, which is not strictly necessary in our proof. 

In order to replace both $\phi_1$ and $\phi_2$ with deep denoisers as we propose, additional constraints on the Lipschitz constants of the denoisers must be placed. For feasible parameters to exist, namely having $\gamma, \eta$ such that $\Lambda(\gamma, \eta)>0$, a necessary and sufficient condition is that $1-L_\phi^2 > 0$. This can be seen by taking $\eta,\beta\gamma \rightarrow 0$. Recall from \Cref{prop:weakly-convex}, if $\phi_1$ is implicitly defined by a gradient step denoiser $\gD = \Id - \nabla g$ where $\nabla g$ is $L_g$-Lipschitz, then the Lipschitz constant of $\nabla \phi$ is bounded by $\frac{L_g}{1-L_g}$. A sufficient condition is therefore $L_g < 1/2$. The Lipschitz constant $\beta$ of the fidelity's gradient can be controlled by simply multiplying the fidelity function by an appropriate regularization parameter, and gives an upper bound on the amount of regularization available similarly to other provable PnP methods \cite{tan2025solving}. 

This restriction that $L_g < 1/2$ is precisely the same as the analysis for the Douglas--Rachford splitting when the fidelity term is nondifferentiable \cite[Thm. 4.4]{hurault2022proximal}. This can be addressed by using a relaxed denoiser of the form $D_{\sigma,\alpha} = I - \nabla g_\alpha = I - \alpha \nabla g$, $\alpha \in (0,1)$, and applying \Cref{prop:weakly-convex} with $\nabla g_\alpha$ being $\alpha L_g$-Lipschitz.

The boundedness assumption is commonly used in nonconvex KL-based convergence analysis. In general, the descent property of $\Theta_{\gamma,\eta}$ alone does not automatically imply boundedness of the sequence. However, this assumption can be enforced by using a clipping step after each iteration such as projecting onto $x\in [0,1]^n$.

The assumptions in Proposition \ref{prop:descent} are satisfied automatically by the
Laplace--Gaussian fidelity since it is a Moreau envelope with
globally Lipschitz gradient. In particular,
\[
    \beta=\lambda_g\lambda_{\max}(A^\top A).
\]
where $\lambda_g$ denotes the Gaussian fidelity parameter introduced in \eqref{eq:LaplaceGaussianFidelity}. For the Poisson--Gaussian fidelity, the same condition holds only locally, for instance when $Ax\geq\varepsilon\mathbf{1}$ componentwise for some
$\varepsilon>0$. If the generated sequence is eventually within a compact set in $\R^n_+$, then the assumptions of Proposition \ref{prop:descent} are satisfied.

\begin{remark}[KL property of the fidelity terms]\label{rem:KL-fidelity}
Note that for the Laplace--Gaussian model, the data-fidelity term is given by the infimal convolution of an $\ell_1$ term and a quadratic term, which yields a Huber-type piecewise quadratic function composed with the linear operator $A$. Hence, it is semialgebraic and therefore satisfies the KL property.

For the Poisson--Gaussian model, the data-fidelity term is defined on the positive domain. By the analysis in \cite{daniele2026deep}, such fidelity terms can be handled locally away from the boundary of the domain. In particular, if the generated iterates remain in a compact subset of
\[
S=\{x\in\mathbb{R}^n: Ax\geq \varepsilon\mathbf{1}\}
\]
for some $\varepsilon>0$, then the fidelity is analytic and therefore satisfies the KL property on $S$.
\end{remark}

\section{Experiments}\label{sec:experiments}
In this section, we evaluate the performance of the proposed infimal convolution fidelity, with and without the two-denoiser formulation of PnP-DYS. We demonstrate that the infimal convolution effectively models the forward process, and that using multiple priors can improve qualitative performance compared to single denoiser PnP methods. We consider image deblurring and super-resolution tasks, under Laplace--Gaussian noise in \Cref{ssec:LaplaceGaussian}, and Poisson--Gaussian noise in \Cref{ssec:PoissonGaussian}. Moreover, we consider reconstruction in two high noise settings, $10\%$ plus $10\%$ Laplace--Gaussian noise, and $27\%$ plus $3\%$ Laplace--Gaussian noise. The high noise scenarios demonstrate that integrating classical priors can help to stabilize reconstruction outside of the well-trained regions of a deep denoiser.

For the pre-trained denoisers $\gD_1, \gD_2$ employed in our PnP-DYS algorithm, we consider two distinct types of denoising operators. The first is the pretrained gradient-step DRUNet \labelcref{eq:FormGSDenoiser}, finetuned to have the Lipschitz constant of $g$ less than 1 \cite{hurault2022proximal}. The second is a total variation (TV) denoiser \cite{rudin1992nonlinear}, given by the proximal operator with step size $\sigma$, 
$\gD_{\sigma}(x) = \prox_{\sigma \TV}(x) = \argmin_u \left\{ \|\nabla u\|_1 + \frac{1}{2\sigma}\norm{u - x}^2  \right\}$. The latter proximal is implemented using DeepInverse \cite{tachella2025deepinverse}, and is used in the extrapolated PnP-DYS algorithm of \cite{wu2024extrapolated}.

We consider two choices within the PnP-DYS framework: \dysgs, using GS-DRUNet for both denoisers with possibly different denoising strengths, and \dystv, using TV as the second denoiser. The latter is designed for large noise scenarios beyond the network's effective range, where TV offers stable performance without relying on learned priors.

We compare our proposed PnP-DYS with several representative provably convergent PnP methods: PnP-PGD, PnP-DRS, GS-PnP, as well as the non-convergent DPIR method. We use the same pretrained gradient step denoiser of \cite{hurault2022proximal} for all provably convergent methods including PnP-DYS, which has been fine-tuned to softly enforce the Lipschitz constraint $L_g<1$. For DPIR, we use the pre-trained DRUNet architecture of \cite{zhang2021plug}. All methods are run for $K_{\max}=400$ iterations unless otherwise stated. As previously found in \cite{tan2023provably}, this is sufficient for these PnP methods to converge or diverge. The first two methods are given as follows: for a fidelity function $f$, single denoiser $\gD$, and step size $\gamma>0$,
\begin{align}
     &\begin{cases}
         z^{(k+1)} = x^{(k)} - \gamma \nabla f(x^{(k)}),\\
         x^{(k+1)} = \gD(z^{(k+1)});
     \end{cases} \tag{PnP-PGD}\\
     &\begin{cases}
         y^{(k+1)} = \gD(x^{(k)}),\\ 
         z^{(k+1)} = \prox_{\gamma f} (2y^{(k+1)} - x^{(k)}),\\
         x^{(k+1)} = x^{(k)} + z^{(k+1)} - y^{(k+1)}.
     \end{cases} \tag{PnP-DRS}
\end{align}
The latter two methods have slight modifications. GS-PnP has an additional backtracking line search with an explicit Lyapunov function, while DPIR has a decreasing step size schedule, log-spaced from a maximum strength of $\sigma_1 = 49$ to a variable $\sigma_{N}$. Additional details are provided in \Cref{app:PnPMethods}.

The choice of descent operator of the PnP methods also varies between $\nabla f$ and $\prox_f$. While the gradient can be computed for the infimal convolution fidelity $\psi_{\mathrm{IC}}$, the proximal operator does not admit an easily computable form. This precludes using the infimal convolution fidelity in methods such as GS-PnP, PnP-DRS, and DPIR, which require computing $\prox_f$. For these methods, we use the Gaussian fidelity $\psi_g(x) = {\lambda_g} \|Ax-y\|^2/2$, whose proximal is computable using Fourier transforms. Since PnP-DRS and PnP-PGD have very similar performance for the same fidelity term \cite{hurault2023relaxed}, the performance between these two reported methods is due to the choice of fidelity $\psi_{\mathrm{IC}}$ versus $\psi_g$. The methods can be grouped as follows:
\begin{enumerate}
    \item Infimal convolution fidelity, two denoisers: \dysgs, \dystv.
    \item Infimal convolution fidelity, one denoiser: PnP-PGD.
    \item Gaussian fidelity (using proximals), one denoiser: GS-PnP, PnP-DRS, DPIR.
\end{enumerate}

Hyperparameters for all methods reported are chosen by grid search over the smaller set3c dataset to maximize PSNR, including the denoiser strength(s), step size, and weighting for fidelity parameters. The detailed search grids and selected hyperparameters are reported in \Cref{sec:params}. We further conduct an ablation study to examine the sensitivity of PnP-DYS to different hyperparameter choices, and the corresponding results are summarized in \Cref{sec:ablation}.

\subsection{Laplace--Gaussian mixed noise}\label{ssec:LaplaceGaussian}
We first evaluate the performance for image deblurring and super-resolution under additive Laplace--Gaussian mixed noise in \Cref{ssec:LGDeblur,ssec:LGSR} respectively. We first report smaller scales and standard deviations corresponding to the low 1-5\% noise typically reported in PnP works, later pushing to higher noise levels to test the robustness of the proposed PnP-DYS splitting in \Cref{ssec:LGHighNoise}. Furthermore, we demonstrate that once we leave the low noise setting, using the infimal convolution fidelity over a Gaussian fidelity term is necessary in order to obtain reasonable reconstructions. 

The observed image $y$ is modeled as additive Laplace noise $\xi$ with scale parameter $\sigma_l$, with independent additive Gaussian noise $\omega$ with standard deviation $\sigma_g$,
\begin{equation}
    y = Ax + \xi + \omega, \quad \xi\sim \text{Lap}(0, \sigma_l),\; \omega \sim \gN(0, \sigma_g^2 I).
\end{equation}
The infimal convolution fidelity function and its gradient are given by \labelcref{eq:LaplaceGaussianFidelity,eq:LaplaceGaussianFidelityGrad}, where the transpose of a convolution operator can be computed using a Fourier transform. 

\subsubsection{Deblurring}\label{ssec:LGDeblur}
For the image deblurring task, the forward operator $A$ is a convolution with circular boundary conditions. We follow the setup of \cite{hurault2021gradient,hurault2023relaxed}, using a set of 10 blur kernels, visualized in \Cref{fig:all_kernels}. The blur operators are further scaled to have $\|A^\top A\|_{\mathrm{op}} \approx 0.96$. The noise standard deviation and scales are taken to be $\sigma_g, \sigma_l \in \{2.55, 7.65\}$. Since the variance of the Laplace noise is $2\sigma_l^2$, these parameters correspond to componentwise standard deviation varying from $1.7\%$ to $5.2\%$.

We report the average PSNR over the 10 blur kernels in \Cref{tab:PSNR_deblurring_LG}, and leave more detailed per-kernel averages to Appendix~\ref{sec:ex-results} in \Cref{tab:performance_final_pro}. Firstly, using an incorrect fidelity is detrimental to the stability of provably convergent methods. Even when tuned, PnP-DRS and DPIR diverge as they do not use the correct fidelity. GS-PnP is still able to achieve reasonable results due to its backtracking procedure, which produces very small step sizes near the end of the iterations. Within the methods using $\psi_{\mathrm{IC}}$, PnP-DYS achieves slightly higher PSNR compared to PnP-PGD, due to the additional flexibility endowed by the second denoiser. The two variants \dysgs and \dystv show comparable performance across different blur kernels and noise levels, with \dysgs being slightly better in this setup.

\begin{table}[]
    \centering
    \caption{Average PSNR across the 10 blur kernels over CBSD10 for different levels of Laplace--Gaussian noise. Hyperparameters are tuned on set3c. We observe that PnP-DYS is able to achieve slightly higher PSNR than the single-denoiser counterparts PnP-PGD and GS-PnP. Moreover, the methods with mismatched fidelities $\psi_g$ instead of $\psi_{\mathrm{IC}}$ have generally poorer results or diverge.}
    \label{tab:PSNR_deblurring_LG}
    \begin{tabular}{ccccccc}
         \toprule
        \multirow{3}{*}{Methods}  & \multicolumn{4}{c}{($\sigma_g, \sigma_l$)} & \multirow{3}{*}{Fidelity, \# denoisers}  \\
        \cmidrule(lr){2-5}
        & (2.55, 2.55) & (2.55, 7.65) & (7.65, 2.55) & (7.65, 7.65) & \\
        \midrule
        \dysgs & \textbf{30.79} & \textbf{28.16} & \textbf{28.87} & \textbf{27.69} & $\psi_{\mathrm{IC}},\, 2$ \\ \dystv & \underline{30.75} & \underline{28.12} & \underline{28.83} & \underline{27.62} & $\psi_{\mathrm{IC}},\, 2$ \\ PnP-PGD & 30.20 & 27.97 & 28.43 & 27.47 & $\psi_{\mathrm{IC}},\, 1$ \\ GS-PnP & 30.29 & 27.67 & 28.30 & 27.03 & $\psi_{g},\, 1$ \\ PnP-DRS & 21.37 & 16.56 & 18.05 & 15.54 & $\psi_g,\,1$ \\ DPIR & 17.83 & 12.32 & 14.05 & 11.29 & $\psi_g,\, 1$ \\
        \bottomrule
    \end{tabular}
\end{table}

A visual comparison is provided in \Cref{fig:comp_deblur}.  GS-PnP and the methods using $\psi_{\mathrm{IC}}$ all converge to similar reconstructions, but GS-PnP has more graininess within the smooth textured components. We can also observe some edge artifacts with the single-denoiser methods that are not present with two denoisers. PnP-DRS and DPIR lead to poor reconstructions and diverge respectively due to the mismatched fidelity; even at the peak PSNR taken at earlier iterations, the reported images are still corrupted.

Iteration residuals and PSNR curves are reported in \Cref{fig:comp_deblur_conv,fig:comp_deblur_psnr} for the smaller set3c dataset. For PnP-DYS, PnP-PGD and GS-PnP, the residuals converge steadily and the corresponding PSNR values increase to stable high levels, indicating stable reconstruction behavior under the mixed-noise model. GS-PnP has a stepping phenomenon at later iterations due to backtracking, where it terminates early. In contrast, although the residuals of DPIR and PnP-DRS also decrease, their PSNR curves rapidly peak then degrade, indicating convergence to a poor reconstruction.

\begin{figure}[!hbtp]
    \centering \captionsetup[subfigure]{labelformat=empty,justification=centering}
    \subfloat[Ground Truth]{\tikzmarknode{bfgt}{\includegraphics[width=0.235\textwidth]{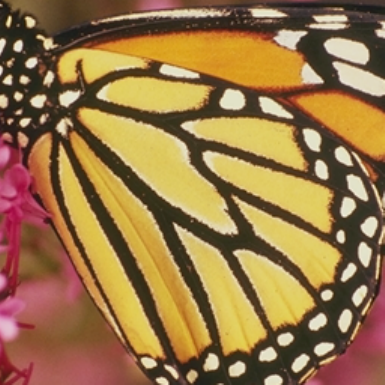}}}
    \hskip2mm
    \subfloat[\dysgs (27.85dB)][\dysgs \\(27.85dB)]{\includegraphics[width=0.235\textwidth]{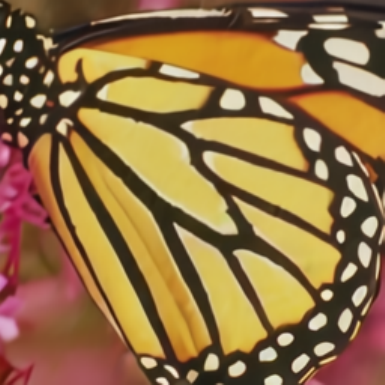}}
    \hskip2mm 
    \subfloat[GS-PnP \\ (27.24dB)]{\tikzmarknode{bfgspnp}{\includegraphics[width=0.235\textwidth]{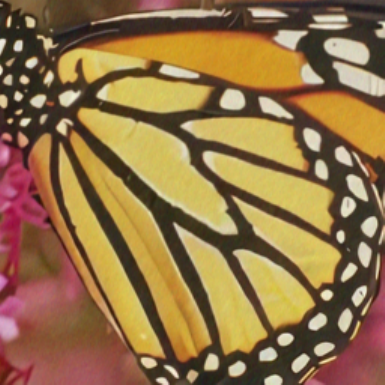}}}
    \hskip2mm
    \subfloat[PnP-PGD\\ (27.51dB)]{\tikzmarknode{bfpgd}{\includegraphics[width=0.235\textwidth]{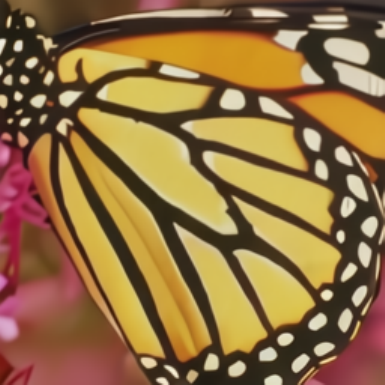}}}
    \\
    \subfloat[Corrupted \\ (13.52dB)]{\includegraphics[width=0.235\textwidth]{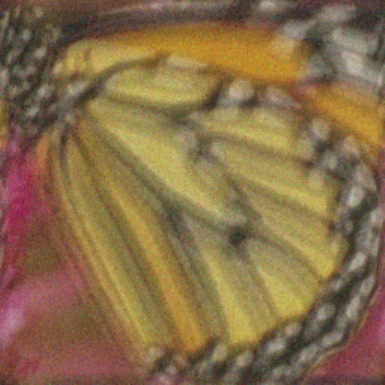}}
    \hskip2mm
    \subfloat[\dystv (27.63dB)][\dystv \\(27.63dB)]{\includegraphics[width=0.235\textwidth]{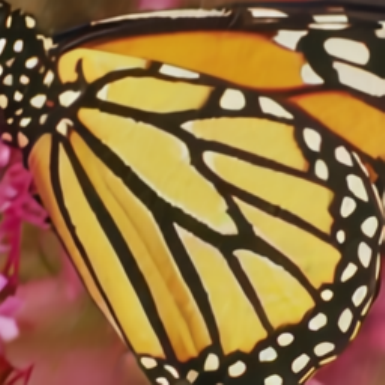}}
    \hskip2mm 
    \subfloat[DPIR \\ (15.96dB)]{\includegraphics[width=0.235\textwidth]{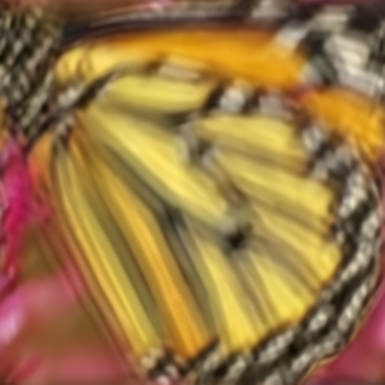}}
    \hskip2mm
    \subfloat[PnP-DRS \\ (17.13dB)]{\includegraphics[width=0.235\textwidth]{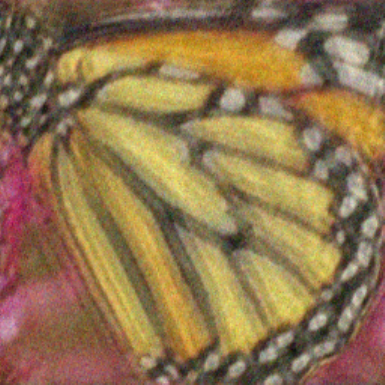}}
    \\
    \begin{tikzpicture}[remember picture,overlay]
    \node at ($(bfgt.center)+(40pt,25pt)$) (gsc) {};
    \draw[->,red,very thick] (gsc)++(0pt,-20pt) to (gsc);
    \node at ($(bfgt.center)+(-38pt,-30pt)$) (bfa) {};
    \draw[->,red,very thick] (bfa)++(12pt,-12pt) to (bfa);
    \node at ($(bfgt.center)+(12pt,38pt)$) (bfb) {};
    \draw[->,red,very thick] (bfb)++(8pt,-16pt) to (bfb);
    \node at ($(bfgt.center)+(-10pt,13pt)$) (bfc) {};
    \draw[->,red,very thick] (bfc)++(8pt,-16pt) to (bfc);
    \end{tikzpicture}
    \caption{Comparison of image deblurring results on the butterfly image with different PnP methods under blur kernel $8$ and noise levels $\sigma_g=7.65$, $\sigma_l=7.65$. For DPIR and PnP-DRS, we report the results at iteration $8$, where the PSNR values are near their early peaks before the subsequent degradation. Red arrows denote locations where artifacts or loss of detail occur on at least one of the reconstructions, mainly near the edges. We observe that PnP-PGD tends to oversmooth and GS-PnP can hallucinate edges, while the proposed PnP-DYS methods provide a reasonable smoothing. }    
    \label{fig:comp_deblur}
\end{figure}

\begin{figure}[!hbtp]
    \centering
    \subfloat[\dysgs]{\includegraphics[width=0.31\textwidth]{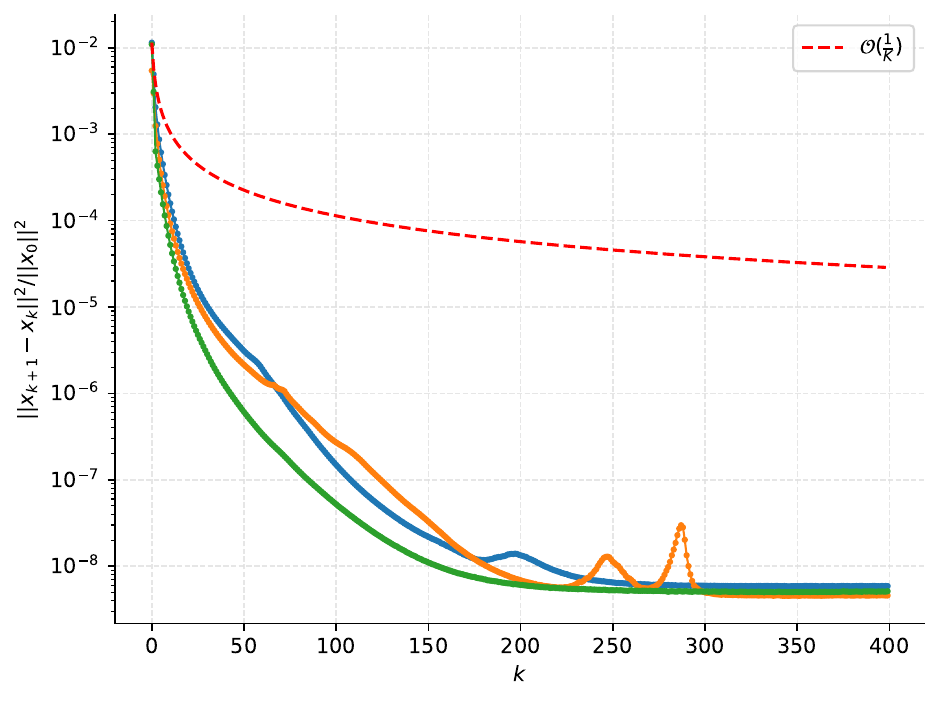}}
    \hskip2mm 
    \subfloat[\dystv]{\includegraphics[width=0.31\textwidth]{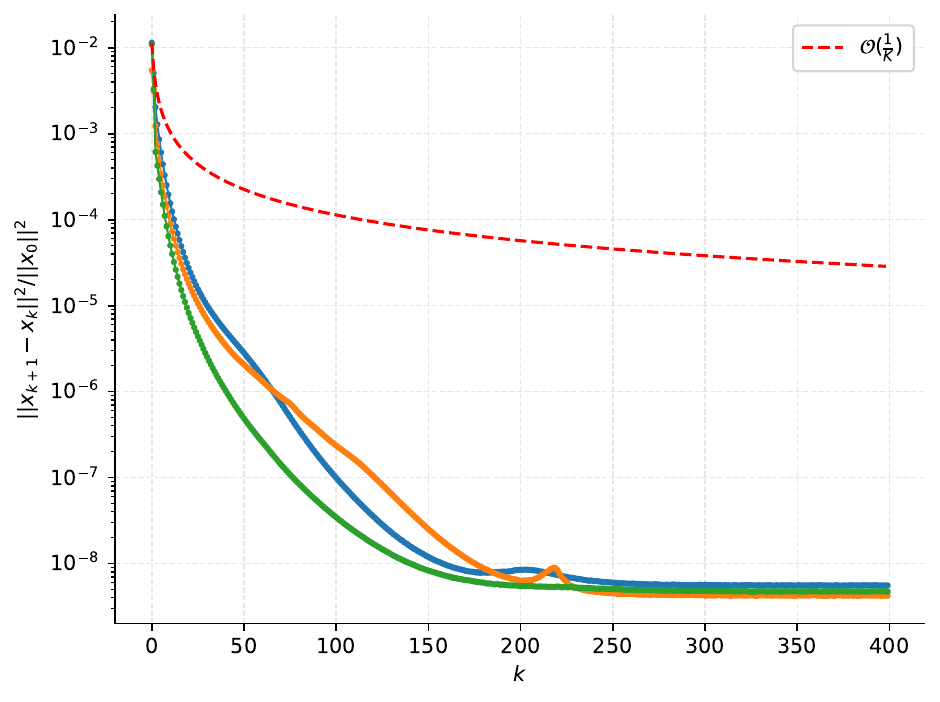}}
    \hskip2mm
    \subfloat[PnP-PGD]{\includegraphics[width=0.31\textwidth]{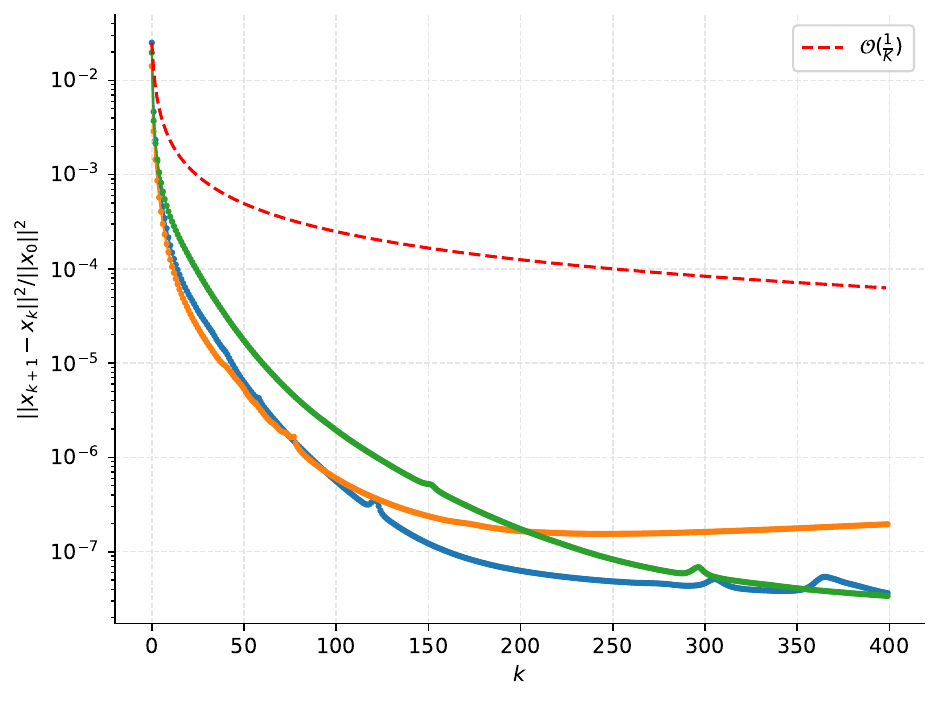}}
    \\
    \subfloat[GS-PnP]{\includegraphics[width=0.31\textwidth]{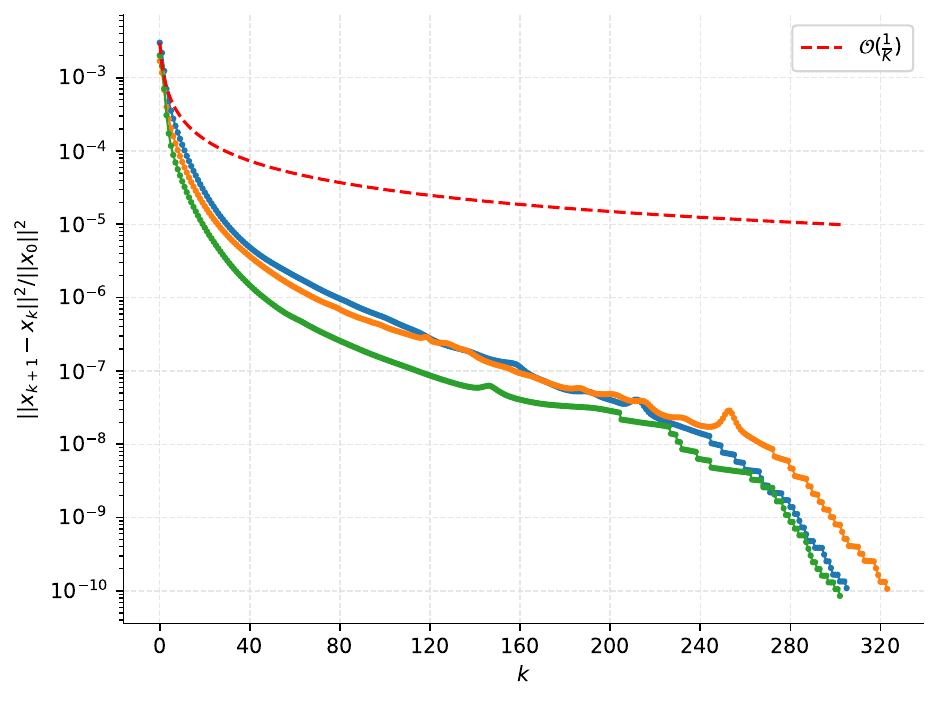}}
    \hskip2mm 
    \subfloat[DPIR]{\includegraphics[width=0.31\textwidth]{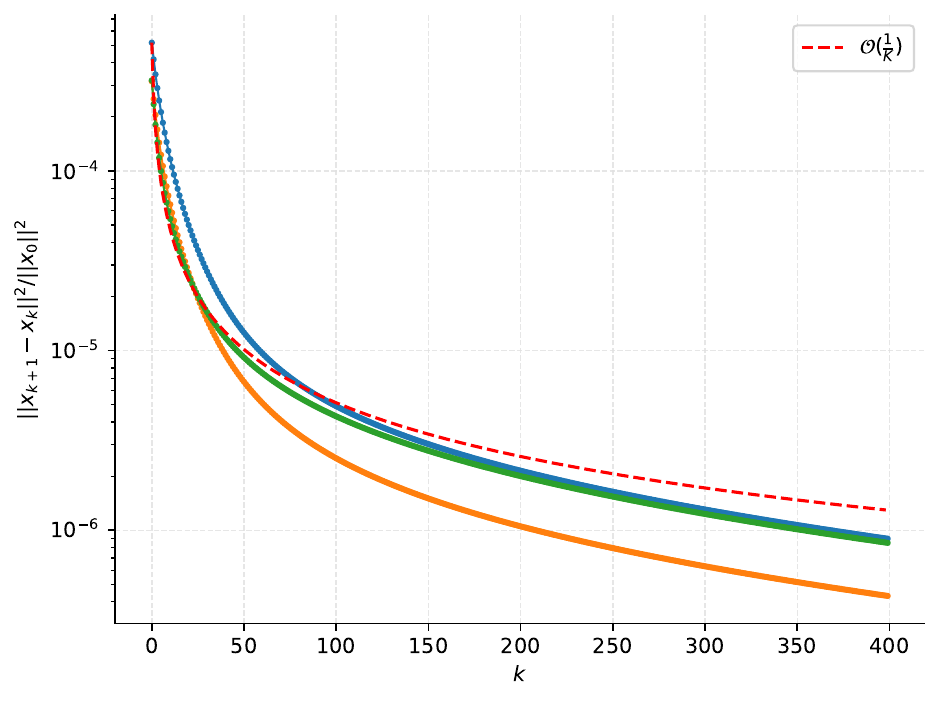}}
    \hskip2mm
    \subfloat[PnP-DRS]{\includegraphics[width=0.31\textwidth]{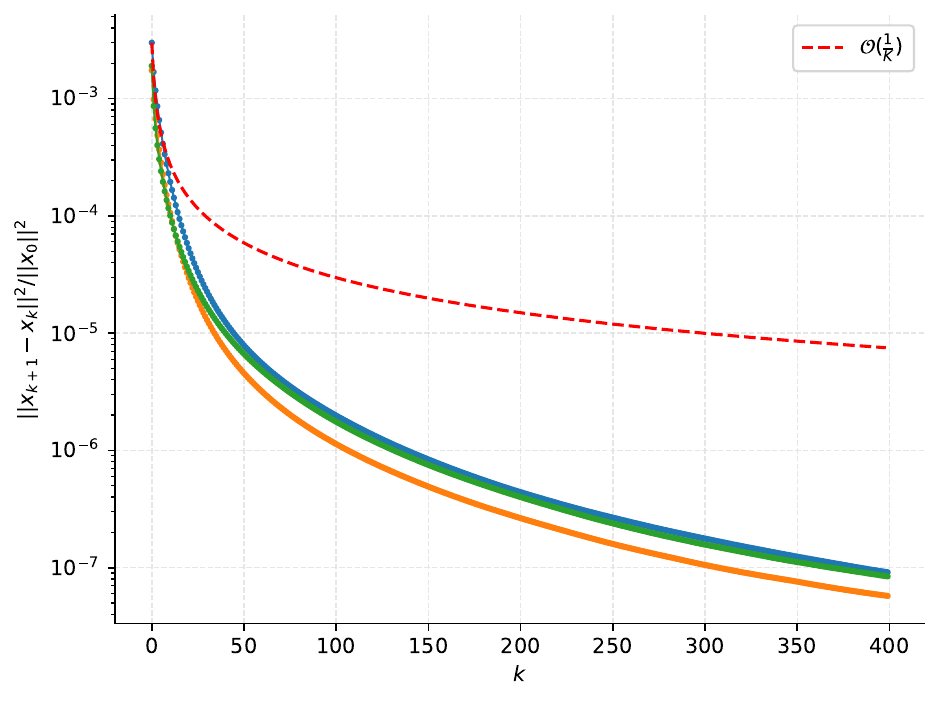}}
    \\
    \caption{Residuals $\norm{\xkp-\xk}^2/\norm{x_0}^2$ for Laplace--Gaussian deblurring with $\sigma_g=\sigma_l=7.65$. Each solid line corresponds to one image of set3c. All the methods converge in terms of residual. GS-PnP's backtracking procedure leads to small step sizes and a rapid residual decay near the end.}    
    \label{fig:comp_deblur_conv}
\end{figure}

\begin{figure}[!hbtp]
    \centering
    \subfloat[\dysgs]{\includegraphics[width=0.31\textwidth]{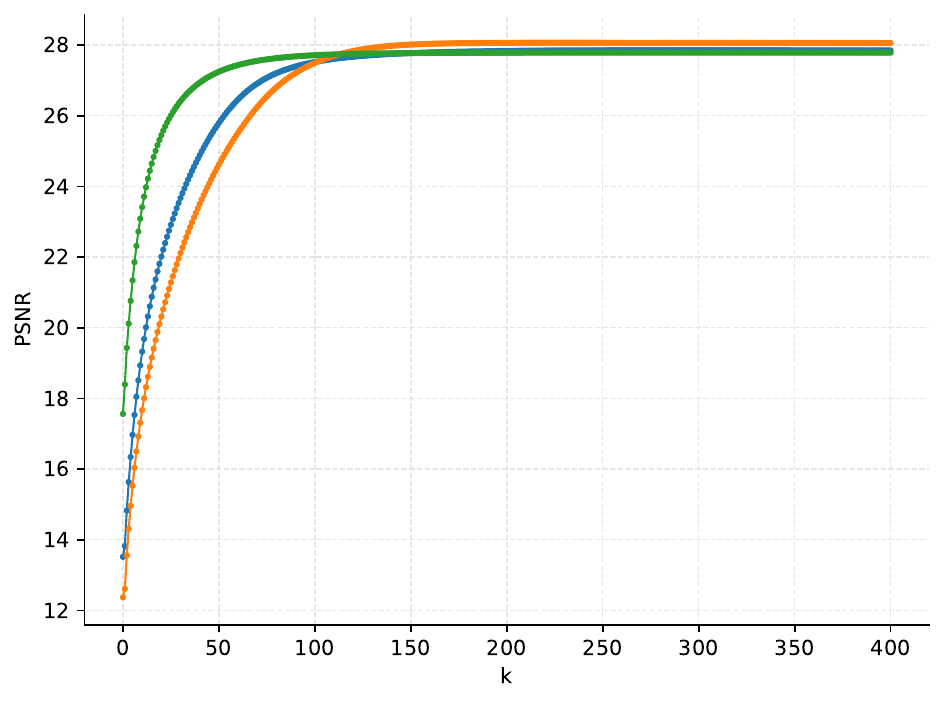}}
    \hskip2mm 
    \subfloat[\dystv]{\includegraphics[width=0.31\textwidth]{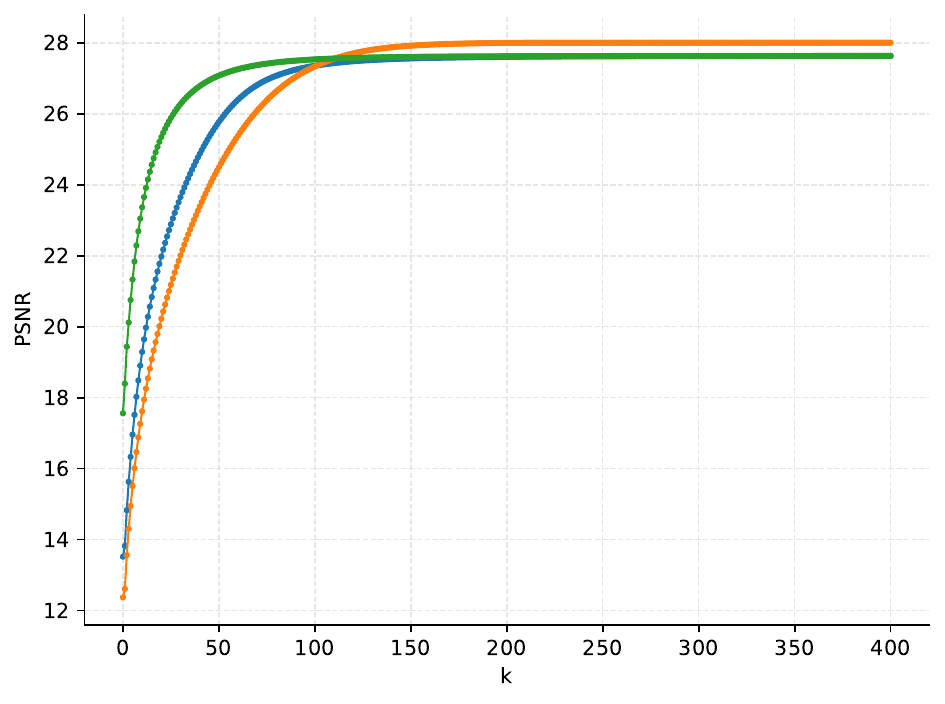}}
    \hskip2mm
    \subfloat[PnP-PGD]{\includegraphics[width=0.31\textwidth]{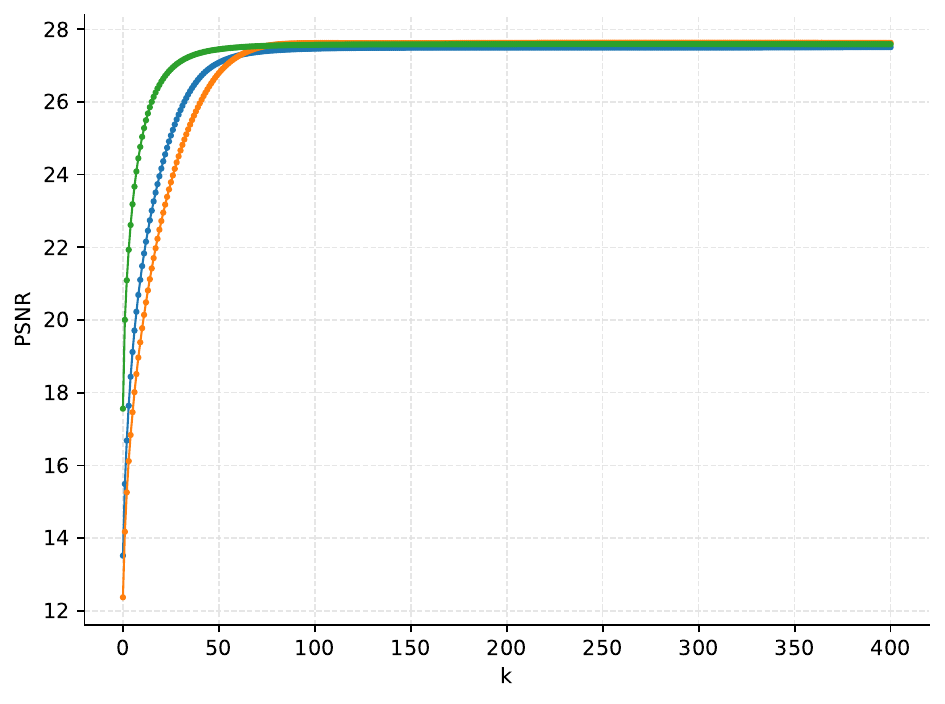}}
    \\
    \subfloat[GS-PnP]{\includegraphics[width=0.31\textwidth]{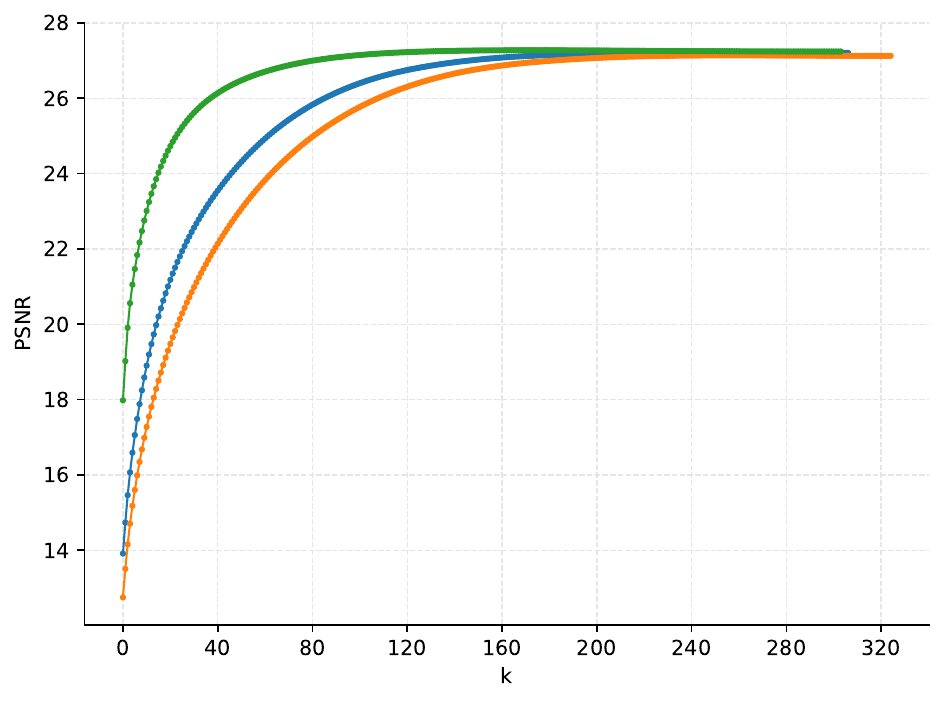}}
    \hskip2mm 
    \subfloat[DPIR]{\includegraphics[width=0.31\textwidth]{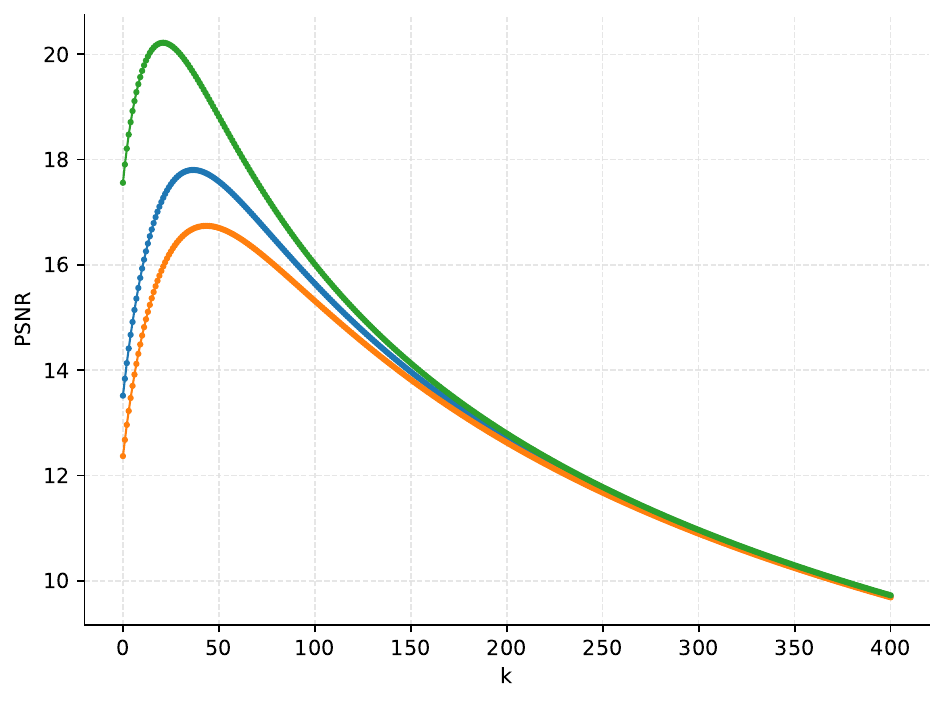}}
    \hskip2mm
    \subfloat[PnP-DRS]{\includegraphics[width=0.31\textwidth]{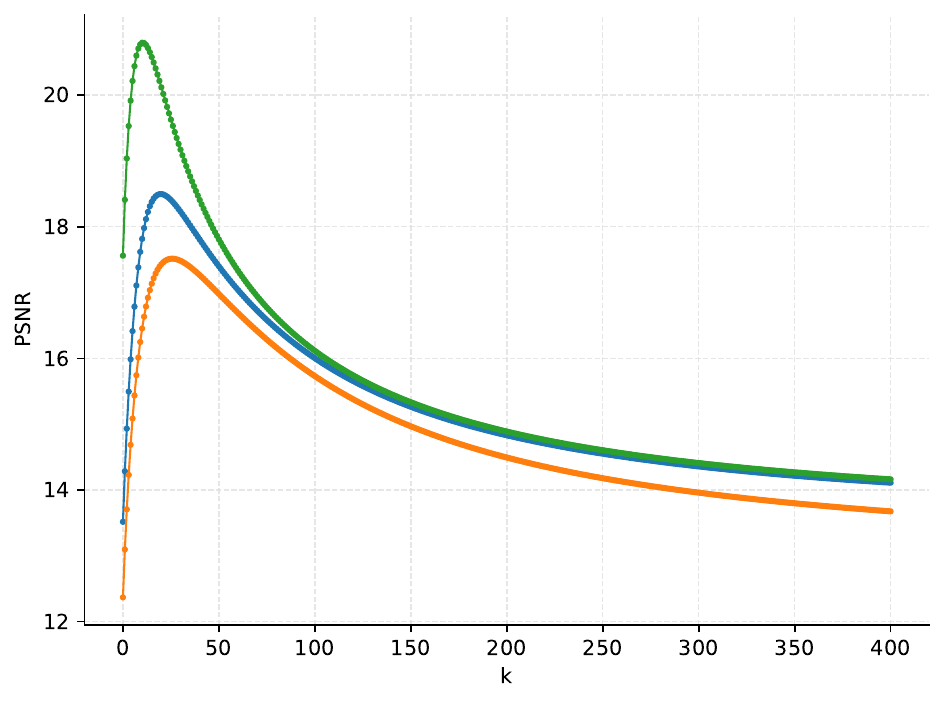}}
    \\
    \caption{PSNR curves for Laplace--Gaussian deblurring with $\sigma_g=\sigma_l=7.65$. Each solid line corresponds to one image of set3c. While the PnP methods using the infimal convolution converge to high PSNR, the Gaussian fidelity methods DPIR and PnP-DRS slowly diverge.}
    \label{fig:comp_deblur_psnr}
\end{figure}

\subsubsection{Super-resolution}\label{ssec:LGSR}
For the image super-resolution task, we again take the setup of \cite{hurault2021gradient}. The forward operator $A$ is given by first applying the first camera blur kernel for antialiasing, then downsampling with scale factors $s=2$ and $s=3$. The forward operator and its transpose can again be easily computed using Fourier transforms.

\begin{table}[!htbp]
        \centering
        \caption{Average PSNR (dB) across different methods for super-resolution under Laplace--Gaussian mixed noise on the CBSD10 dataset. The best results are highlighted in bold, and the second-best results are underlined. The infimal convolution PnP methods have better performance, particularly in the presence of larger Laplace noise.}
        \label{tab:sisr_laplace}
        \footnotesize
        \setlength{\tabcolsep}{6pt} 
        \begin{tabular}{ccccccccc}
            \toprule
             Scale & \multicolumn{4}{c}{$s = 2$} & \multicolumn{4}{c}{$s = 3$} \\
            \cmidrule(lr){2-5} \cmidrule(lr){6-9}
            $\sigma_g$ & 2.55 & 2.55 & 7.65 & 7.65 & 2.55 & 2.55 & 7.65 & 7.65 \\
            $\sigma_l$ & 2.55 & 7.65 & 2.55 & 7.65 & 2.55 & 7.65 & 2.55 & 7.65 \\
            \midrule
           \dysgs & \underline{30.18} & \textbf{28.85} & \textbf{29.37} & \underline{28.45} & \underline{27.14} & \underline{26.42} & \underline{26.79} & \underline{26.28} \\
            \dystv & \textbf{30.21} & 28.83 & \underline{29.34} & 28.35 & \textbf{27.20} & \textbf{26.49} & \textbf{26.83} & \textbf{26.32} \\
            PnP-PGD & 29.90 & \underline{28.84} & 29.33 & \textbf{28.49} & 26.99 & 26.33 & 26.68 & 26.12\\
            GS-PnP  & 29.74 & 28.13 & 28.85 & 28.06 & 26.93 & 25.98 & 26.51 & 25.77 \\
            DPIR & 28.98 & 28.35 & 28.61 & 28.11 & 25.98 & 25.67 & 25.83 & 25.52 \\
            PnP-DRS  & 24.94 & 22.37 & 23.41 & 21.54 & 23.48 & 21.97 & 22.66 & 21.39 \\
            % PnP-DRSdiff & 24.21 & 20.72 & 22.01 & 19.80 & 23.33 & 21.47 & 22.31 & 20.81 \\
            \bottomrule
        \end{tabular}
    \end{table}

\Cref{tab:sisr_laplace} gives the average PSNR over the CBSD10 dataset for the two scales. Similarly to deblurring, the methods using $\psi_g$ perform slightly worse than those using $\psi_{\mathrm{IC}}$, with the gap becoming larger as the scale $\sigma_l$ increases. PnP-DYS and PnP-PGD both have similar performance, favoring PnP-DYS for the larger scale factor $s=3$, where the inverse problem is more ill-conditioned and the additional prior helps stabilize the reconstruction.

\subsubsection{High noise}\label{ssec:LGHighNoise}
To evaluate the robustness of the infimal convolution framework under severe degradation, we consider deblurring with high-noise, taking $\sigma_g = \sigma_l = 25.5$. This corresponds to an effective $17\%$ noise. An extended hyperparameter search is done and detailed in \Cref{sec:params-lg}.

The average PSNR per kernel is summarized in Table \ref{tab:psnr_transposed}. PnP-DRS and DPIR both diverge and we do not report their results. The proposed PnP-DYS and PnP-PGD methods with the infimal convolution fidelity are both stable and converge to similar PSNRs. Moreover, the average reconstruction quality using the infimal convolution fidelity is approximately 1dB better than the Gaussian fidelity GS-PnP. 

While the PSNR figures are quite similar, PnP-DYS and PnP-PGD have different qualitative convergence. \Cref{fig:comp_deblur_large} plots the reconstructions of the tiki image from CBSD10 under high noise deblurring, and points out artifacts that occur for PnP-PGD but not PnP-DYS. PnP-PGD tends to oversmooth the image, leading to a loss of structural details and textures. GS-PnP suffers from significant graininess throughout the image, coming from the heavy tails of the Laplace noise not being captured by the Gaussian fidelity. In contrast, PnP-DYS effectively suppresses the heavy mixed noise while successfully restoring sharp edges and fine textures.

\begin{table}[htbp!]
    \centering
    \caption{Average PSNR (dB) results for image deblurring under heavy Laplace--Gaussian mixed noise on the CBSD10 dataset, with $\sigma_l=\sigma_g=25.5$. Results are reported across different blur kernels. The best results are highlighted in bold, and the second-best results are underlined. }
    \label{tab:psnr_transposed}
    \fontsize{6.5pt}{7.5pt}\selectfont % 
    \renewcommand{\arraystretch}{1.3} % 
    \setlength{\tabcolsep}{5.5pt} % 
    \begin{tabular}{lcccccccccc}
        \toprule
        Kernel & $1$ & $2$ & $3$ & $4$ & $5$ & $6$ & $7$ & $8$ & $9$ & $10$ \\
        \midrule
        \dysgs  & \textbf{24.57} & \textbf{24.48} & \underline{24.90} & \textbf{24.11} & \underline{25.41} & \textbf{25.24} & \textbf{24.99} & \textbf{24.57} & \textbf{24.07} & \underline{24.94}\\
        \dystv  & 24.46 & 24.37 & 24.85 & \underline{24.07} & 25.28 & 25.07 & \underline{24.90} & \underline{24.53} & 24.00 & 24.91\\
        PnP-PGD & \underline{24.56} & \underline{24.40} & \textbf{25.16} & 23.92 & \textbf{25.66} & \underline{25.18} & 24.77 & 24.35 & \underline{24.04} & \textbf{25.48}\\
        GS-PnP   & 23.36 & 23.52 & 24.34 & 23.19 & 24.44 & 24.00 & 23.83 & 23.62 & 23.85 & \underline{24.94} \\
        \bottomrule
    \end{tabular}
\end{table}

\begin{figure}[!hbtp]
    \centering
    % --- 
    \subfloat[Ground Truth]{    \tikzmarknode{gt}{\includegraphics[width=0.28\textwidth]{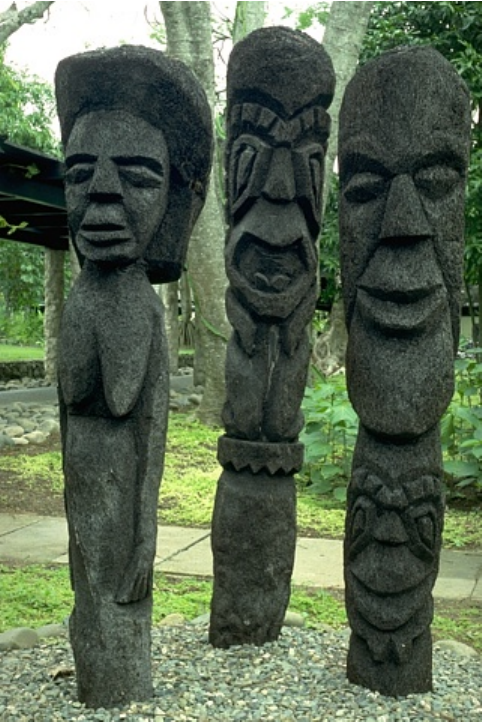}
        \label{fig:gt}}
    }
    \hskip2mm
    \subfloat[Corrupted(13.54dB)]{
        \includegraphics[width=0.28\textwidth]{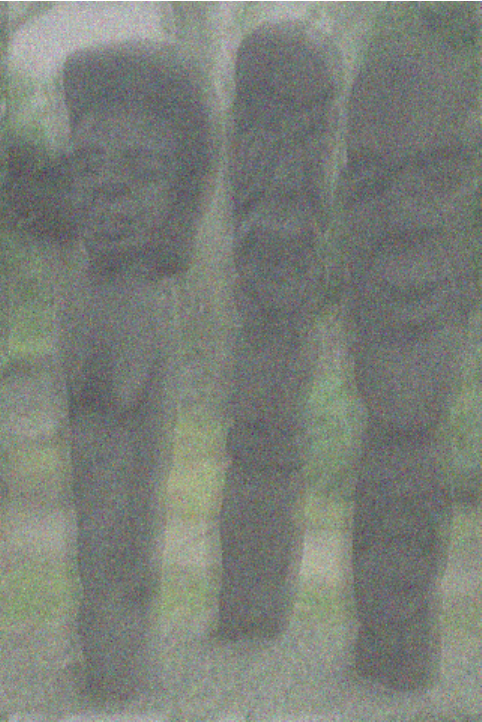}
        \label{fig:corrupted}
    }
    \hskip2mm
    \subfloat[\dysgs(21.55dB)]{
        \includegraphics[width=0.28\textwidth]{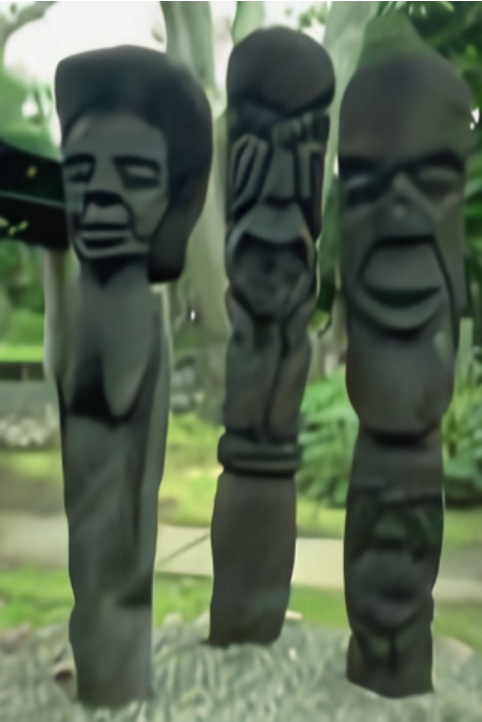}
        \label{fig:method_b}
    }
    \\
    \subfloat[\dystv(21.57dB)]{
        \includegraphics[width=0.28\textwidth]{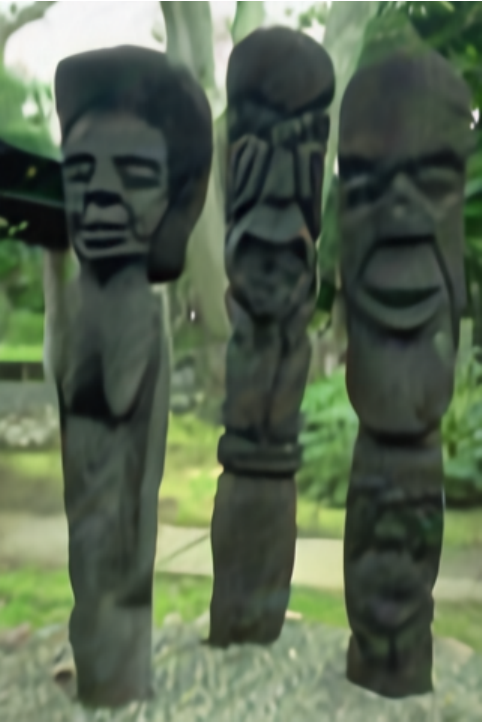}
        \label{fig:method_a}
    }
    \hskip2mm
    \subfloat[PnP-PGD(21.35dB)]{
        \includegraphics[width=0.28\textwidth]{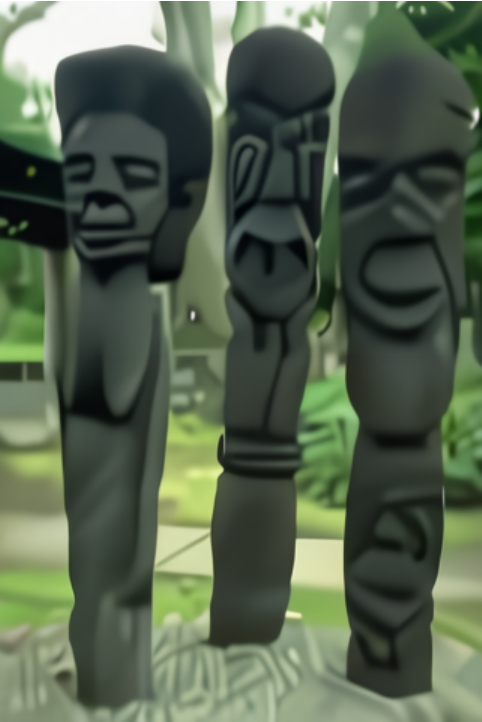}
        \label{fig:conv}
    }
    \hskip2mm
    \subfloat[GS-PnP(21.22dB)]{
        \includegraphics[width=0.28\textwidth]{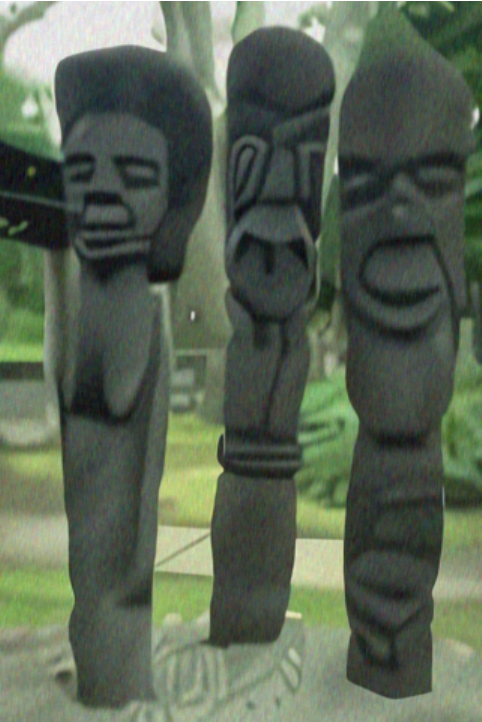}
        \label{fig:psnr}
    }
    \begin{tikzpicture}[remember picture,overlay]
    \node at ($(gt.center)+(40pt,-30pt)$) (c) {};
    \draw[->,red,very thick] (c)++(10pt,30pt) to (c);
    \node at ($(gt.center)+(-24pt,-30pt)$) (b) {};
    \draw[->,red,very thick] (b)++(-20pt,0pt) to (b);
    \node at ($(gt.center)+(24pt,74pt)$) (a) {};
    \draw[->,red,very thick] (a)++(-20pt,0pt) to (a);
    \node at ($(gt.center)+(-50pt,-10pt)$) (d) {};
    \draw[->,red,very thick] (d)++(0pt,20pt) to (d);
    \end{tikzpicture}

    \caption{Visual comparison of image deblurring under heavy Laplace--Gaussian noise, with red arrows on the ground truth to indicate regions of significant textural difference. While PnP-DYS and PnP-PGD use the same fidelity, PnP-PGD tends to oversmooth in the tiki bodies. A common misreconstructed region is above the rightmost tiki head; \dystv has the most green background in this region, while PnP-PGD completely smooths this region away. The mismatched fidelity of GS-PnP manifests as overall graininess, which only slightly affects the PSNR value.}
    \label{fig:comp_deblur_large}
\end{figure}

To further evaluate the robustness and stability of our framework, \Cref{fig:comp_deblur_dys} illustrates a comparison between \dystv and \dysgs under an extreme noise regime with $\sigma_g=7.65$ and $\sigma_l = 68.85$, corresponding to 38\% noise. This requires an aggressive fidelity parameter setting with $\lambda_g = 10.0$ and $\lambda_l=7.65$. While \dysgs achieves a slightly higher PSNR value in the early stages of iteration, its convergence behavior exhibits fluctuations as the iteration step increases. This instability is also visually reflected in subfigure (d), where the reconstructed image suffers from a brightness drift compared to the reference image.
In contrast, by replacing one of the deep denoisers with a TV denoiser, \dystv demonstrates stability under these conditions. This comparison reveals that although deep denoisers provide powerful representative capabilities, the inclusion of a more stable TV prior in the PnP-DYS framework can offer a more reliable and robust choice where noise levels are extremely large.

\begin{figure}[!hbtp]
    \centering
    \subfloat[Ground Truth]{\includegraphics[width=0.23\textwidth]{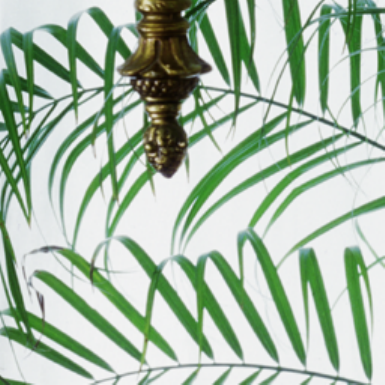}}
    \hskip2mm
    \subfloat[Corrupted (7.72dB)]{\includegraphics[width=0.23\textwidth]{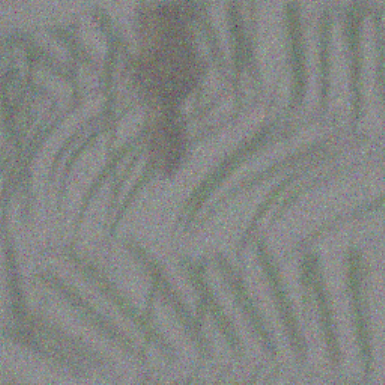}}
    \hskip2mm 
    \subfloat[\dystv (18.59dB)]{\includegraphics[width=0.23\textwidth]{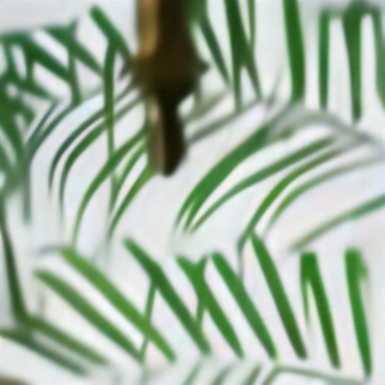}}
    \hskip2mm
    \subfloat[\dysgs (17.17dB)]{\includegraphics[width=0.23\textwidth]{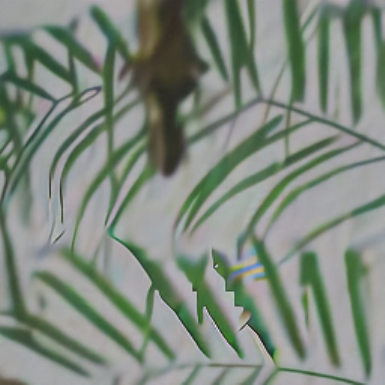}}
    \\
    \subfloat[Convergence of \dystv]{\includegraphics[width=0.23\textwidth]{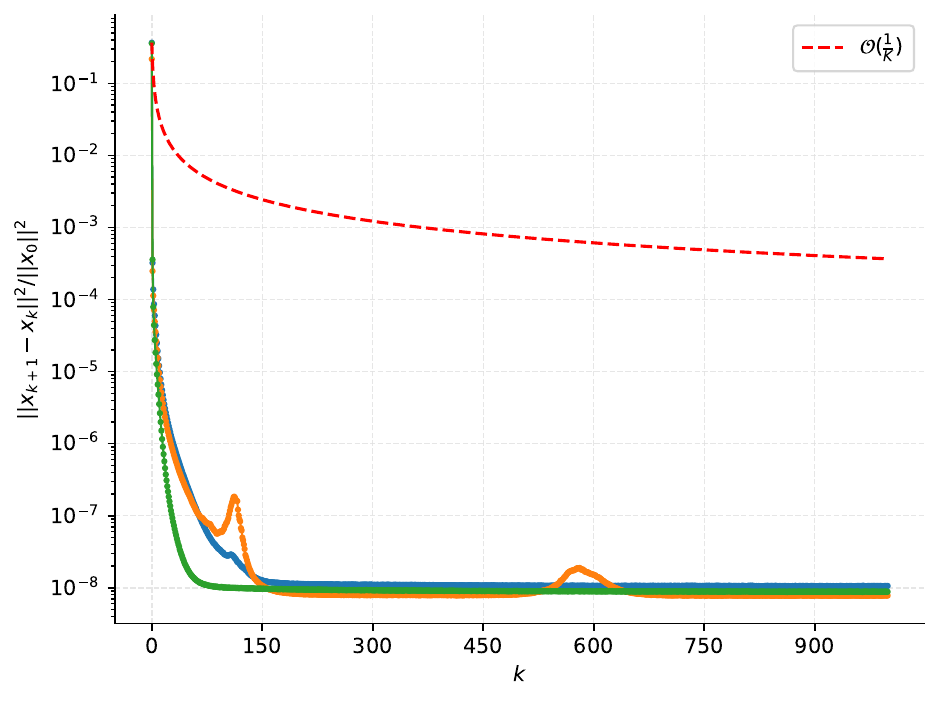}}
    \hskip2mm
    \subfloat[Convergence of \dysgs]{\includegraphics[width=0.23\textwidth]{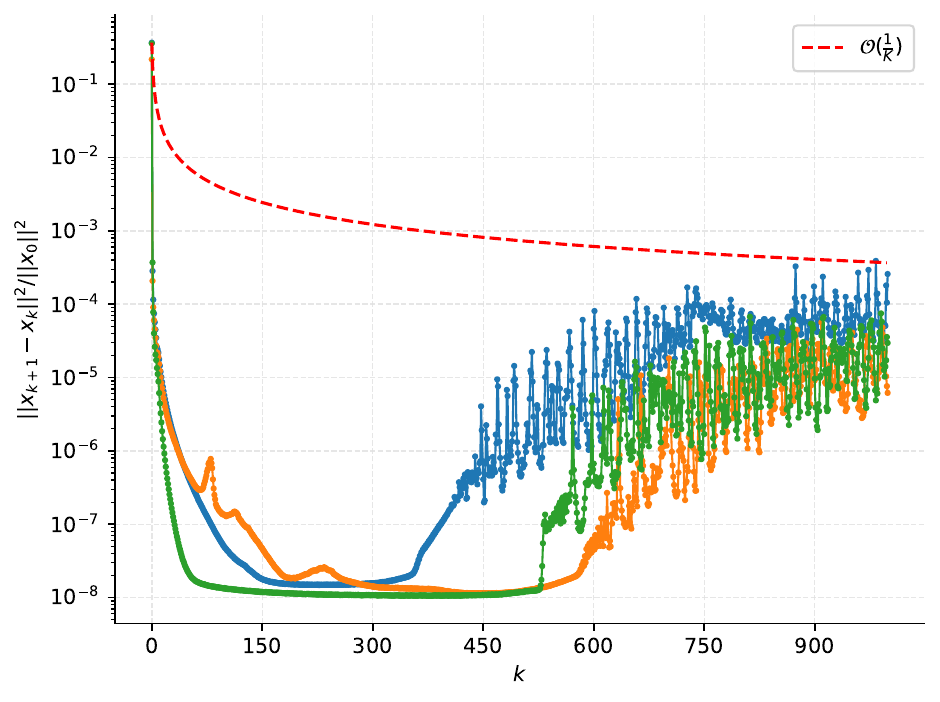}}
    \hskip2mm 
    \subfloat[PSNR of \dystv]{\includegraphics[width=0.23\textwidth]{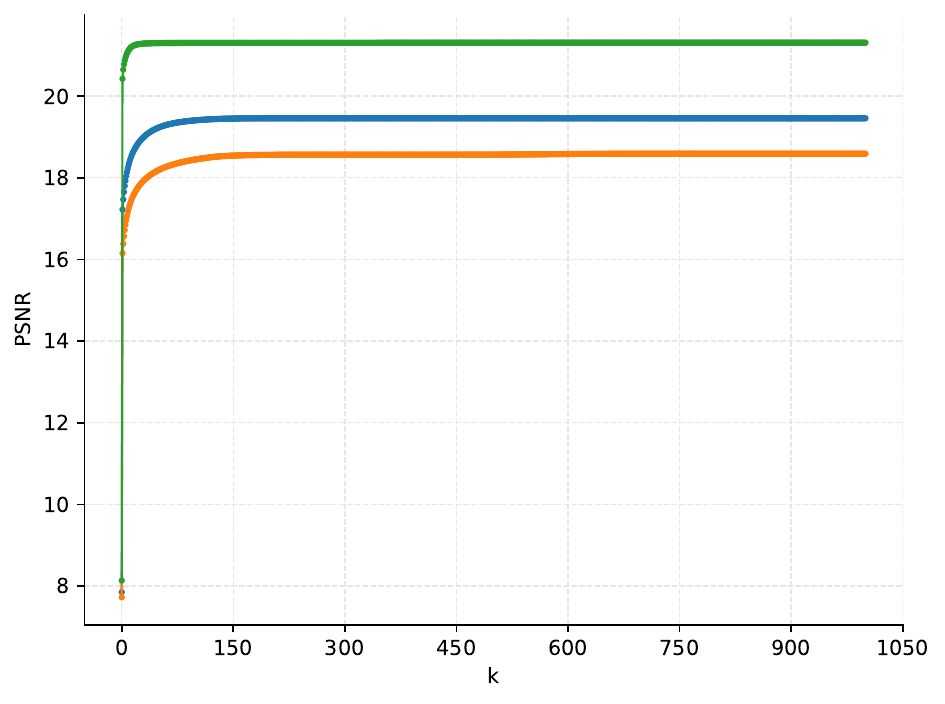}}
    \hskip2mm
    \subfloat[PSNR of \dysgs]{\includegraphics[width=0.23\textwidth]{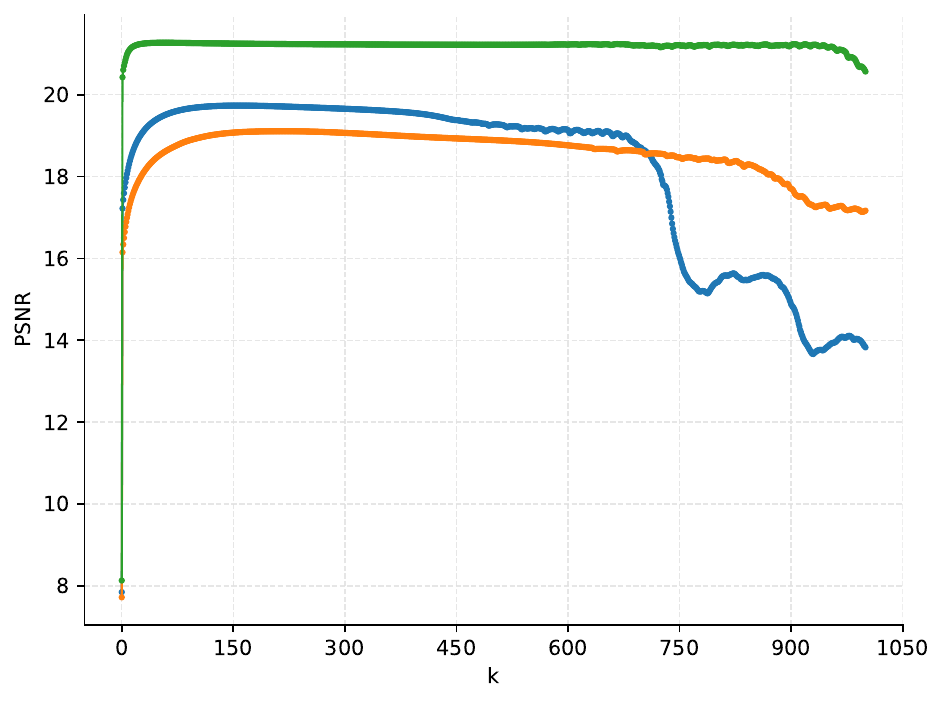}}
    \\
    \caption{Comparison of image deblurring results corrupted by large noise on the leaves image of PnP-DYS methods, evaluated up to 1000 iterations. The noise scales are $\sigma_g=7.65$ and $\sigma_l = 68.85$, corresponding to $38\%$ standard deviation. In this very high noise regime, \dystv is more stable compared to \dysgs, where the latter begins to have divergent artifacts.}
    \label{fig:comp_deblur_dys}
\end{figure}

\subsection{Poisson--Gaussian mixed noise}\label{ssec:PoissonGaussian}
We now evaluate various PnP methods under Poisson--Gaussian mixed noise, for deblurring in \Cref{ssec:PGDeblur} and super-resolution in \Cref{ssec:PGSR}. We consider PnP-DYS, PnP-PGD, and GS-PnP, dropping the PnP-DRS and DPIR methods due to poor convergence. The observed image $y$ is modeled as:
\beq
y = \frac{1}{\sigma_p} z + \omega,\quad z \sim \mathrm{Pois}(\sigma_p Ax), \quad \omega \sim \gN(0, \sigma_g^2 I),
\eeq
where $A$ again represents the linear forward operator. The parameter $\sigma_p$ acts as the peak signal intensity, where smaller $\sigma_p$ corresponds to a lower photon count and thus more severe Poisson noise. The noise parameters are varied over $\sigma_g \in \{2.55, 7.65, 12.75\}$ and $\sigma_p \in \{20, 40, 60, 100, 200\}$. While the gradient of the Laplace--Gaussian infimal convolution term could be computed in closed form, the gradient of the Poisson--Gaussian term requires a short inner loop, which can be computed using \labelcref{eqs:PoissonGaussianGrad}. In practice, a Newton iteration inner loop for evaluating the gradient of $\psi_{\mathrm{IC}}$ requires at most 10 iterations to converge up to a tolerance of $10^{-6}$.

\subsubsection{Deblurring}\label{ssec:PGDeblur}

We consider the setup as in \Cref{ssec:LGDeblur} with a camera blur kernel and varying noise levels. \Cref{tab:performance_deblur_poisson} presents the average PSNR values over the set3c dataset for PnP-DYS and PnP-PGD using the infimal convolution fidelity, and GS-PnP using the Gaussian fidelity. The infimal convolution fidelity consistently outperforms the Gaussian fidelity on this problem, despite PSNR measuring the average $\ell_2$ error, suggesting that proper modeling is necessary for good reconstructions. We additionally find that PnP-DRS and DPIR diverge for the Gaussian fidelity with this noise. 

For this experiment, the infimal convolution PnP methods all have fairly similar performance. In the high Poisson noise regime, corresponding to small $\sigma_p$, PnP-PGD performs slightly better in terms of PSNR. PnP-DYS is best in the medium noise regime, and also when the noise is mainly Poissonian, indicated by the superior performance for $\sigma_g = 2.55$. The stability of each of these methods across both Laplace--Gaussian and Poisson--Gaussian suggests that while deep Gaussian denoisers are not trained on these different noises, the image priors they induce are still sufficient to provide reasonable reconstructions.

\begin{table}[h!]
    \centering
    \caption{Average PSNR over set3c images for deblurring across different noise levels for kernel $1$, with various Poisson--Gaussian noise. Higher $\sigma_p$ indicates less Poisson noise. PnP-DYS and PnP-PGD use the infimal convolution fidelity, while GS-PnP uses a Gaussian fidelity. Quantitative results of infimal convolution-based PnP are all similar, and beat the Gaussian fidelity used in GS-PnP.}
    \label{tab:performance_deblur_poisson}
    \small
    \begin{tabular}{cc cccc} 
        \toprule
        \multirow{2}{*}{$\sigma_g$} &\multirow{2}{*}{$\sigma_p$} & \multicolumn{4}{c}{Methods} \\
        \cmidrule(l){3-6}
        & & \dysgs & \dystv & PnP-PGD & GS-PnP \\
        \midrule 
        \multirow{5}{*}{2.55}  & 20 & 22.49 & \underline{22.50} & \textbf{22.62} & 21.46 \\
        & 40 & \textbf{24.33} & 24.24 & \underline{24.25} & 23.31 \\
        & 60 & \textbf{25.43}  & \underline{25.32} & 25.20 & 24.19 \\
        & 100 & \textbf{26.48} & \underline{26.44} & 26.09 & 25.16\\
        & 200 & \underline{27.99} & \textbf{28.00} & 27.30 & 26.46\\
        \midrule \midrule
        \multirow{5}{*}{7.65}  & 20 & \underline{22.44} & \underline{22.44} & \textbf{22.57} & 21.39\\
        & 40 & \textbf{24.16} & 24.12 & \underline{24.13} & 23.21\\
        & 60 & \textbf{25.23} & \underline{25.13} & 25.00 & 24.08\\
        & 100 & \textbf{26.13} & \underline{26.08} & 25.88 & 24.99\\
        & 200 & \underline{27.30} & \textbf{27.31} & 26.81 & 26.15\\
        \midrule \midrule
        \multirow{5}{*}{12.75}  & 20 & 22.30 & \underline{22.32} & \textbf{22.49} & 21.30\\
        & 40 & \textbf{24.16} & 23.84 & \underline{23.91} & 23.04 \\
        & 60 & \textbf{24.80} & \underline{24.73} & 24.70 & 23.84 \\
        & 100 & \textbf{25.48} & \underline{25.44} & 25.37 & 24.65 \\
        & 200 & \underline{25.94} & \textbf{26.02} & \textbf{26.02} & 25.59\\
        \bottomrule
    \end{tabular}
\end{table}

\subsubsection{Super-resolution}\label{ssec:PGSR}

For super-resolution, we use the same setup as \Cref{ssec:LGSR} with the Poisson--Gaussian noise, and evaluate on CBSD10 with the additional blur kernel. \Cref{tab:performance_sisr_poisson} summarizes the PSNR and runtime of the PnP methods in the previous section, along with the runtime per 100 iterations. As in other sections, PnP-DYS and PnP-PGD with the infimal convolution fidelity outperform GS-PnP with the Gaussian fidelity, with the gap widening for larger Poisson noise.

Computational overhead of the infimal convolution and PnP-DYS can be seen separately from the average runtime. The Newton iteration inner loop manifests as the difference between PnP-PGD and GS-PnP: we observe that it incurs at most a 2\% wall-clock time increase. Increasing the number of priors has a more significant impact on computational cost. \dysgs requires two denoiser evaluations per iteration, leading to nearly double the cost of PnP-PGD and GS-PnP, while computing the proximal of the TV term in \dystv requires a smaller 30\% increase in time.

\begin{table}[htbp!]
    \centering
    \caption{PSNR and average runtime for super-resolution under Poisson--Gaussian noise, averaged over CBSD10 dataset and with additional blur kernel $1$ for different scales $s$. We observe again that the infimal convolution methods are able to outperform the Gaussian fidelity based GS-PnP method once leaving the low Gaussian plus low Poisson noise regime. Moreover, the infimal convolution methods are all competitive with each other. \dysgs takes around twice as long per iteration as PnP-PGD due to having to evaluate two denoisers.}
    \label{tab:performance_sisr_poisson}
    \footnotesize
    \setlength{\tabcolsep}{0pt} 
    \begin{tabular*}{\textwidth}{@{\extracolsep{\fill}}cc cccc cccc} 
        \toprule
        \multirow{2}{*}{$\sigma_g$} & \multirow{2}{*}{$\sigma_p$} & \multicolumn{4}{c}{$s=2$} & \multicolumn{4}{c}{$s=3$} \\
        \cmidrule(lr){3-6} \cmidrule(lr){7-10}
        & & DYS\textsuperscript{GS-GS} & DYS\textsuperscript{GS-TV} & PGD & GS & DYS\textsuperscript{GS-GS} & DYS\textsuperscript{GS-TV} & PGD & GS \\
        \midrule
        \multirow{5}{*}{2.55} & 20 & \textbf{24.38} & 24.19 & \underline{24.31} & 23.77 & \underline{23.26} & \textbf{23.27} & 23.07 & 22.74 \\
        & 40 & \textbf{25.32} & 25.23 & \underline{25.31} & 24.82 & \textbf{24.21} & \underline{24.20} & \underline{24.20} & 23.73 \\
        & 60 & \textbf{25.87} & 25.81 & \underline{25.85} & 25.39 & \textbf{24.78} & \underline{24.74} & \underline{24.74} & 24.31 \\
        & 100 & \textbf{26.42} & \underline{26.39} & 26.37 & 25.95 & \textbf{25.44} & \underline{25.42} & 25.31 & 24.87 \\
        & 200 & \underline{27.11} & \textbf{27.15} & 26.94 & 26.90 & \textbf{26.11} & \underline{26.10} & 25.96 & 25.86 \\
        \midrule
        \multirow{5}{*}{7.65} & 20 & \textbf{24.32} & 24.12 & \underline{24.25} & 23.71 & \underline{23.20} & \textbf{23.22} & 23.05 & 22.69 \\
        & 40 & \textbf{25.25} & 25.13 & \underline{25.23} & 24.74 & \textbf{24.14} & \underline{24.13} & \textbf{24.14} & 23.67 \\
        & 60 & \textbf{25.76} & \underline{25.68} & \textbf{25.76} & 25.32 & \textbf{24.64} & \underline{24.63} & \textbf{24.64} & 24.19 \\
        & 100 & \textbf{26.23} & \underline{26.16} & \textbf{26.23} & 25.83 & \textbf{25.29} & \underline{25.27} & 25.17 & 24.74 \\
        & 200 & \textbf{26.82} & \textbf{26.82} & \underline{26.73} & 26.59 & \textbf{25.87} & \underline{25.86} & 25.75 & 25.45 \\

        \midrule
        \multirow{5}{*}{12.75} & 20 & \textbf{24.21} & 23.97 & \underline{24.14} & 23.62 & \underline{23.11} & \textbf{23.13} & 22.92 & 22.61 \\
        & 40 & \textbf{25.09} & 24.92 & 25.08 & 24.60 & 23.98 & \textbf{24.00} & \underline{23.99} & 23.53 \\
        & 60 & \underline{25.55} & 25.41 & \textbf{25.56} & 25.14 & \textbf{24.44} & \textbf{24.44} & \underline{24.43} & 24.02 \\
        & 100 & 25.76 & \underline{25.67} & \textbf{25.95} & 25.59 & \textbf{25.01} & \underline{24.98} & 24.91 & 24.49 \\
        & 200 & 26.07 & \underline{26.10} & \textbf{26.35} & 26.02 & \textbf{25.44} & \underline{25.42} & 25.33 & 24.71 \\

        \midrule
        \multicolumn{2}{c}{Time/100 iter (s)} & 8.27 & 5.92 & 4.54 & 4.46 & 8.11 & 5.75 & 4.55 & 4.50\\

        \bottomrule
    \end{tabular*}
\end{table}

\section{Conclusion}
This work extends the Plug-and-Play framework for imaging inverse problems to mixed noise corruption. By replacing the standard $\ell_2^2$ Gaussian fidelity with an infimal convolution, the variational model can be interpreted as a joint MAP estimation over the noises and prior. In the case of two log-concave noise distributions, we provide methods for computing the gradient of the fidelity, encompassing the Laplace--Gaussian and Poisson--Gaussian models as special cases, with the latter computed using a cheap inner Newton iteration. We further extend this to multiple priors using the Davis--Yin three-operator splitting, and show convergence under weaker assumptions compared to \cite{wu2024extrapolated}. Numerical experiments demonstrate that matching the infimal convolution fidelity provides significant quantitative performance increases over standard Gaussian fidelities. At noise levels above the usual level in PnP literature, they also increase robustness and stability without needing additional safeguards such as Armijo step sizes. Furthermore, we show that adding multiple priors leads to different stationary points with distinct qualitative properties, including fewer hallucination artifacts, reduced oversmoothing at high noise levels, and improved stability at high noise when one prior is chosen to be total variation.

This work primarily focuses on PnP methods utilizing gradients of the fidelity functions, limiting the choice of PnP algorithm. While different PnP methods with the same denoiser and parameter typically converge to the same stationary point, proximals of the infimal convolution fidelity would allow for use in methods such as PnP-DRS, which may have different convergence behavior. Further numerical analysis could include finding alternative constraints on the denoisers, such as weakening the Lipschitz constraint $L_g<1/2$ by imposing additional regularity on the second denoiser. Possible applications could include more complicated data modalities such as CT reconstruction, or replacing the denoiser priors with diffusion models such as in \cite{kawar2022denoising}. Other algorithmic approaches could consider alternative multi-operator splitting methods such as \cite{raguet2013generalized,ryu2020finding}, or splittings with Bregman modifications \cite{jian2026partially}.

\backmatter

\begin{appendices}

\section{Proofs}\label{sec:proofs}

\subsection{Proof of Proposition~\ref{prop:descent}} \label{appsec:ProofDescent}
We restate \Cref{prop:descent} here for convenience.

\begin{proposition*}
Consider applying the relaxed Davis--Yin splitting to the nonconvex function \labelcref{eq:proxyF} with step size $\gamma>0$ and relaxation $\eta \in (0,1]$, i.e. the iterations
\begin{equation}\label{eq:pnp-dys-prox}
\begin{cases}
    x^{k+1} = \prox_{\phi_1}(w^k), \\
    u^{k+1} = \prox_{\phi_2}(2x^{k+1}-w^k-\gamma\nabla \psi(x^{k+1})), \\
    w^{k+1} = w^k + \eta (u^{k+1}-x^{k+1}).
\end{cases}
\end{equation}
Assume the following conditions: 
\begin{enumerate}
    \item $\nabla\psi$ is $\beta$-Lipschitz,
    \item $\phi_1$ is $l_{\phi}$-weakly convex with $l_{\phi} \in [0,1)$, and moreover has $L_{\phi}$-Lipschitz gradient,
    \item $\phi_2$ is proper and closed with $\prox_{\phi_2}$ being nonempty.
\end{enumerate} 

Define the PnP-DYS energy function associated with
\eqref{eq:proxyF} by
\beq\label{eq:energy_func}
\begin{aligned}
\Theta_{\gamma,\eta}(x,u,w)
\coloneqq &\;
\psi(x) + \frac{1}{\gamma}\phi_1(x)
+
\frac{1}{\gamma}\phi_2(u) +
\frac{1}{2\gamma}
\norm{2x-u-w-\gamma\nabla\psi(x)}^2\\
&-
\frac{1}{2\gamma}
\norm{w-x+\gamma\nabla\psi(x)}^2
-
\frac{\eta}{\gamma}\norm{x-u}^2 .
\end{aligned}
\eeq
Further define the following descent constant
\begin{equation}
    \Lambda(\gamma, \eta) \coloneqq \frac{1}{\gamma} \left[-\frac{1 + l_\phi}{2} + \frac{1 - L_\phi^2}{\eta}\right] - \beta \left[1 + \frac{(1 - \eta + L_\phi)^2}{2\eta^2}\right].
\end{equation}
If $\Lambda(\gamma, \eta)>0$, then for all $k\geq 1$, the descent on $\Theta_{\gamma,\eta}$ holds
\begin{equation*}
\Theta_{\gamma,\eta}(x^{k+1},u^{k+1},w^{k+1})
-
\Theta_{\gamma,\eta}(x^k,u^k,w^k)
\leq
-\Lambda(\gamma,\eta)\norm{x^{k+1}-x^k}^2 .
\end{equation*}
\end{proposition*}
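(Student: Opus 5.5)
The plan is to follow the Lyapunov analysis of \cite{bian2021three} for unrelaxed DYS, keeping track of the relaxation $\eta$ throughout. The energy $\Theta_{\gamma,\eta}$ changes between iterates through three moves: the $u$-update, the $w$-update and the $x$-update. We bound each change separately and then sum the three bounds.

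\textbf{Optimality conditions and bookkeeping.} First we record what each update gives us.
\begin{itemize}
\item Since $\phi_1$ is differentiable, $x^{k+1}=\prox_{\phi_1}(w^k)$ gives the identity $w^k = x^{k+1}+\nabla\phi_1(x^{k+1})$.
\item Hence $\norm{w^k-w^{k-1}}\le (1+L_\phi)\norm{x^{k+1}-x^k}$.
\item From the relaxation step, $u^{k+1}-x^{k+1}=(w^{k+1}-w^k)/\eta$. Combined with the identity above, this expresses $u^{k+1}-x^{k+1}$ through $x^{k+2}-x^{k+1}$ and $\nabla\phi_1$-differences.
\item For $u^{k+1}$ we only use the minimizing property of the proximal point, comparing against the candidate $u^k$.
\item Set $z^k\coloneqq 2x^{k+1}-w^k-\gamma\nabla\psi(x^{k+1})$. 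The prox inequality then reads
\[
\phi_2(u^{k+1})+\tfrac12\norm{u^{k+1}-z^k}^2\le \phi_2(u^k)+\tfrac12\norm{u^k-z^k}^2 .
\]
\end{itemize}
After multiplying by $1/\gamma$, this handles the $\phi_2$ term and the first quadratic term of $\Theta_{\gamma,\eta}$. The rewriting is chosen so that no regularity of $\phi_2$ is needed.

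\textbf{Step 1 ($w$-update).} Fix $x=x^{k+1}$ and $u=u^{k+1}$, and pass from $w^k$ to $w^{k+1}$. The two quadratic terms in $w$ have opposite signs, so the $\norm{w}^2$ parts cancel and $\Theta_{\gamma,\eta}$ is affine in $w$. Its increment is therefore an inner product with $w^{k+1}-w^k=\eta(u^{k+1}-x^{k+1})$. This produces a multiple of $\frac{\eta}{\gamma}\norm{u^{k+1}-x^{k+1}}^2$, which the term $-\frac{\eta}{\gamma}\norm{x-u}^2$ is designed to absorb.

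\textbf{Step 2 ($x$-update).} Move from $x^k$ to $x^{k+1}$ with $w=w^k$ fixed.
\begin{itemize}
\item Use $l_\phi$-weak convexity of $\phi_1$ in the form
\[
\phi_1(x^{k+1})-\phi_1(x^k)\le\iprod{\nabla\phi_1(x^{k+1})}{x^{k+1}-x^k}+\tfrac{l_\phi}{2}\norm{x^{k+1}-x^k}^2 .
\]
\item Use the descent lemma for $\psi$ with constant $\beta$.
\item Substitute $\nabla\phi_1(x^{k+1})=w^k-x^{k+1}$. The linear terms should then telescope against the expansion of the quadratics, leaving $-\frac{1}{\gamma}\cdot\frac{1-l_\phi}{2}$-type terms, plus cross terms involving $\gamma(\nabla\psi(x^{k+1})-\nabla\psi(x^k))$. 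Those cross terms are bounded by $\beta\norm{x^{k+1}-x^k}\cdot\norm{\cdot}$.
\end{itemize}

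\textbf{Step 3 (closing in $\norm{x^{k+1}-x^k}^2$).} This is the main obstacle. The remaining terms are $\norm{u^{k+1}-x^{k+1}}^2$ and the mixed products, and they must all be converted into $\norm{x^{k+1}-x^k}^2$.
\begin{itemize}
\item Write $\eta(u^k-x^k)=w^k-w^{k-1}=(x^{k+1}-x^k)+(\nabla\phi_1(x^{k+1})-\nabla\phi_1(x^k))$. This requires $k\ge1$, which is why the statement starts at $k\ge 1$.
\item This identity yields a lower bound $\frac{1-L_\phi^2}{\eta}$-type coefficient from $\norm{a+b}^2\ge \norm a^2-\norm b^2$-style estimates.
\item The gradient cross term is handled by Young's inequality, with weight chosen to produce $\beta\frac{(1-\eta+L_\phi)^2}{2\eta^2}$. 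The factor $1-\eta$ arises from writing $u^k-x^{k+1}$ as $(1-\eta)$ times a $u-x$ difference plus $w$-differences, all divided by $\eta$.
\end{itemize}
Summing the three steps should give exactly $-\Lambda(\gamma,\eta)\norm{x^{k+1}-x^k}^2$.

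I expect the delicate part to be choosing the Young weights and the order of substitutions so that the constants match $\Lambda$ exactly. If they do not match exactly, the fallback is to obtain some positive constant that is at least $\Lambda$, which still gives the stated descent.
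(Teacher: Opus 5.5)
Your three-move decomposition follows the paper's route. The paper does not redo your Steps~1--2; it imports them from \cite[Lem.~3.3, Eq.~A.4]{bian2021three}. Carried out, the $+\frac{\eta}{\gamma}\|u^{k+1}-x^{k+1}\|^2$ from the $w$-move cancels exactly against the $-\frac{\eta}{\gamma}\|x^{k+1}-u^{k+1}\|^2$ in the $\prox_{\phi_2}$ comparison. What survives is $+\frac{\eta}{\gamma}\|x^{k+1}-u^{k}\|^2$. Adding the $x$-move then gives, with $\delta=x^{k+1}-x^k$,
\[
\Theta_{\gamma,\eta}(x^{k+1},u^{k+1},w^{k+1})-\Theta_{\gamma,\eta}(x^k,u^k,w^k)\le-\frac{1-l_\phi}{2\gamma}\|\delta\|^2+\frac{1}{\gamma}\|x^{k+1}-u^k\|^2-\frac{1-\eta}{\gamma}\|x^k-u^k\|^2+(\psi\text{-terms}).
\]
So the terms left for Step~3 involve $u^k$, not $\|u^{k+1}-x^{k+1}\|^2$ as you wrote. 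Your bookkeeping bullet rewriting $u^{k+1}-x^{k+1}$ through $x^{k+2}-x^{k+1}$ must not be used, since it controls the wrong increment.

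The genuine gap is in Step~3, which is the part the paper works out by hand. The coefficient $\frac{1-L_\phi^2}{\eta}$ does not come from an estimate of the form $\|a+b\|^2\ge\|a\|^2-\|b\|^2$. That inequality is false (take $b=-a/2$). The coefficient comes from an exact completion of squares. Set $d=\nabla\phi_1(x^{k+1})-\nabla\phi_1(x^k)$ and $s=w^k-w^{k-1}=\delta+d=\eta(u^k-x^k)$. Then $x^k-u^k=-s/\eta$ and $x^{k+1}-u^k=\delta-s/\eta$, hence
\[
\frac{1}{\gamma}\Big\|\delta-\frac{s}{\eta}\Big\|^2-\frac{1-\eta}{\gamma\eta^2}\|s\|^2=\Big(\frac{1}{\gamma}-\frac{1}{\gamma\eta}\Big)\|\delta\|^2+\frac{1}{\gamma\eta}\|s-\delta\|^2,\qquad s-\delta=d,
\]
and only then is $\|d\|\le L_\phi\|\delta\|$ applied. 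The cross term $\langle\delta,s\rangle$ has to stay paired with $\|s\|^2$. Suppose instead you bound $\|x^{k+1}-u^k\|$ from above and $\|x^k-u^k\|$ from below separately (e.g.\ via $\|s\|\ge(1-L_\phi)\|\delta\|$). This costs an extra $\frac{4(1-\eta)L_\phi}{\gamma\eta^2}\|\delta\|^2$, so you miss $\Lambda$ whenever $\eta<1$ and $L_\phi>0$.

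Your treatment of the $\beta$-part is fine: Young with weight one, then $\|x^{k+1}-u^k\|=\|(1-\eta)\delta+d\|/\eta\le\frac{1-\eta+L_\phi}{\eta}\|\delta\|$. Finally, your fallback runs the wrong way. Cruder estimates produce a constant \emph{smaller} than $\Lambda$, which does not yield the inequality as stated.
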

\begin{proof}
    Let us first define some intermediate variables
    \begin{equation}\label{eq:sk}
        \delta \coloneqq x^{k+1} - x^k,\quad d \coloneqq \nabla \phi_1(x^{k+1}) - \nabla \phi_1(x^{k}),\quad s\coloneqq w^{k} - w^{k-1}.
    \end{equation}
    By the optimality condition of $x^{k+1} = \prox_{\phi_1}(w^{k})$ (and similarly for $x^k$), and the update for $w^{k+1}$, we have that 
    \begin{equation*}
        \begin{cases}
            w^k = x^{k+1} + \nabla \phi_1(x^{k+1})\\
            w^{k-1} = x^{k} + \nabla \phi_1(x^{k})
        \end{cases} \quad \Longrightarrow \quad s = \delta + d = \eta (u^k - x^k).
    \end{equation*}
    We now show a decay on $\Theta_{\gamma,\eta}$. Rearranging $\Theta_{\gamma,\eta}$ gives the equivalent form
    \begin{equation}
        \begin{aligned}
            \Theta_{\gamma,\eta}(x,u,w)
            =&\;
            \frac{1}{\gamma}\pso(x)
            +
            \frac{1}{\gamma}\pst(u)
            +
            \psi(x)\\
            &+
            \frac{1-2\eta}{2\gamma}\norm{x-u}^2
            -
            \frac{1}{\gamma}\iprod{x-u}{w-x}
            +
            \iprod{\nabla\psi(x)}{u-x}.
        \end{aligned}
    \end{equation}
    From \cite[Lem. 3.3, Eq. A.4]{bian2021three}, the weak convexity assumption on $\phi_1$ and the existence of a proximal function gives the following upper bound on the Lyapunov decay,   
\[
\begin{aligned}
&\Theta_{\gamma,\eta}(x^{k+1},u^{k+1},w^{k+1})
-
\Theta_{\gamma,\eta}(x^k,u^k,w^k)\\
\leq\;&
-\frac{1-l_\phi}{2\gamma}
\norm{x^{k+1}-x^k}^2
+
\psi(x^{k+1})-\psi(x^k)
+
\left\langle
\nabla\psi(x^{k+1}),u^k-x^{k+1}
\right\rangle\\
&-
\left\langle
\nabla\psi(x^k),u^k-x^k
\right\rangle
+
\frac1\gamma\norm{x^{k+1}-u^k}^2
-
\frac{1-\eta}{\gamma}\norm{x^k-u^k}^2 .
\end{aligned}
\]

Since $\nabla\psi$ is
$\beta$-Lipschitz continuous, we have
\[
\psi(x^{k+1})-\psi(x^k)
-
\left\langle
\nabla\psi(x^k),x^{k+1}-x^k
\right\rangle
\leq
\frac{\beta}{2}\|{x^{k+1}-x^k}\|^2 .
\]
Thus,
\[
\begin{aligned}
&\psi(x^{k+1})-\psi(x^k)
+
\left\langle
\nabla\psi(x^{k+1}),u^k-x^{k+1}
\right\rangle
-
\left\langle
\nabla\psi(x^k),u^k-x^k
\right\rangle\\
=\;&
\psi(x^{k+1})-\psi(x^k)
-
\left\langle
\nabla\psi(x^k),x^{k+1}-x^k
\right\rangle\\
&+
\left\langle
\nabla\psi(x^{k+1})-\nabla\psi(x^k),
u^k-x^{k+1}
\right\rangle\\
\leq\;&
\frac{\beta}{2}\norm{x^{k+1}-x^k}^2
+
\beta\|{x^{k+1}-x^k}\|\|{x^{k+1}-u^k}\|\\
\leq\;&
\beta\|{x^{k+1}-x^k}\|^2
+
\frac{\beta}{2}\|{x^{k+1}-u^k}\|^2 
\end{aligned}
\]
using the elementary inequality $ab \le (a^2 + b^2)/2$ on the final line. The difference in the Lyapunov function after an update thus satisfies
\[
\begin{aligned}
&\Theta_{\gamma,\eta}(x^{k+1},u^{k+1},w^{k+1})
-
\Theta_{\gamma,\eta}(x^k,u^k,w^k)\\
\leq\;&
-\frac{1-l_\phi}{2\gamma}
\|{x^{k+1}-x^k}\|^2
+
\beta\|{x^{k+1}-x^k}\|^2\\
&+
\left(
\frac1\gamma+\frac{\beta}{2}
\right)
\|{x^{k+1}-u^k}\|^2
-
\frac{1-\eta}{\gamma}\|{x^k-u^k}\|^2 \\
&= -\frac{1-l_\phi}{2\gamma} \|\delta\|^2 + \beta \|\delta\|^2 + \left(\frac{1}{\gamma} + \frac{\beta}{2}\right) \|x^{k+1} - u^k\|^2 - \frac{1-\eta}{\gamma} \|x^k - u^k\|^2
\end{aligned}.
\]
Define the constant $A \coloneqq 1/\gamma + \beta/2$, and define the latter two terms as $\Xi$,
\begin{align*}
    \Xi &\coloneqq A \|x^{k+1} - u^k\|^2 - \frac{1-\eta}{\gamma} \|x^k - u^k\|^2\\
    &= A \|\delta - \frac{1}{\eta}s\|^2 - \frac{1-\eta}{\gamma \eta^2} \|s\|^2\\
    &= A\|\delta\|^2 - \frac{2A}{\eta}\langle \delta, s\rangle + \left(\frac{A}{\eta^2} - \frac{1-\eta}{\gamma \eta^2}\right) \|s\|^2\\
    &= A\|\delta\|^2 - \frac{2A}{\eta}\langle \delta, s\rangle + \frac{1}{\eta^2}\left(\frac{\beta}{2} + \frac{\eta}{\gamma}\right) \|s\|^2.
\end{align*}
Rearranging into components with and without $\beta$ yields
\begin{align*}
    \Xi &= \left[\frac{1}{\gamma} \|\delta\|^2 - \frac{2}{\gamma \eta} \langle \delta, s\rangle + \frac{1}{\gamma \eta} \|s\|^2\right] + \left[\frac{\beta}{2} \|\delta\|^2 - \frac{\beta}{\eta}\langle \delta, s \rangle + \frac{\beta}{2\eta^2}\|s\|^2 \right]\\
    &= \frac{1}{\gamma} \|\delta\|^2 - \frac{1}{\gamma \eta}\|\delta\|^2 + \frac{1}{\gamma \eta} \|s-\delta\|^2 + \frac{\beta}{2\eta^2} \|s-\eta \delta\|^2 \\
    &= \left(\frac{1}{\gamma} - \frac{1}{\gamma \eta}\right)\|\delta\|^2 + \frac{1}{\gamma \eta}\|d\|^2 + \frac{\beta}{2\eta^2} \|(1-\eta)\delta + d\|^2 \\
    &\le \left(\frac{1}{\gamma} - \frac{1}{\gamma \eta}\right) \|\delta\|^2 + \frac{L_\phi^2}{\eta \gamma } \|\delta\|^2 + \frac{\beta (1-\eta+ L_\phi)^2}{2\eta^2} \|\delta\|^2,
\end{align*}
where the latter inequalities come from $s = \delta + d$, triangle inequality, $\nabla \phi_1$ being $L_\phi$-Lipschitz, and $\eta \le 1$. Combining yields the Lyapunov decay
\begin{align*}
    &\quad \Theta_{\gamma,\eta}(x^{k+1},u^{k+1},w^{k+1})-
\Theta_{\gamma,\eta}(x^k,u^k,w^k)\\
& \le \left(-\frac{1-l_\phi}{2\gamma} + \beta + \frac{1}{\gamma} - \frac{1}{\gamma \eta} + \frac{L_\phi^2}{\eta \gamma} + \frac{\beta (1 - \eta + L_\phi)^2}{2\eta^2}\right)\|\delta\|^2\\
&= -\Lambda(\gamma, \eta) \|\delta\|^2
\end{align*}
as desired.
\end{proof}

\color{black}

% Put the proof of Proposition~\ref{prop:descent} here.

\subsection{Additional supporting lemma}
In addition to a descent condition, another technical condition for convergence is a relative error bound. This simply states that if the difference between successive iterates is small, then the subgradient must also be small. The following lemma details this, and will be used in the proof of \Cref{thm:global_convergence}.
\begin{lemma}\label{lem:relative-error}
Assume that the assumptions of Proposition \ref{prop:descent} hold. Then there
exists a constant $C>0$ such that, for all $k\geq 1$,
\[
    \dist
    \left(
        0,\partial\Theta_{\gamma,\eta}(x^k,u^k,w^k)
    \right)
    \leq
    C\norm{x^{k+1}-x^k}.
\]
\end{lemma}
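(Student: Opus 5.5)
The plan is to compute an explicit element of $\partial\Theta_{\gamma,\eta}(x^k,u^k,w^k)$ and show that each of its three blocks is a combination of $w^k-w^{k-1}$ and $u^k-x^k$. I will work with the rearranged form of $\Theta_{\gamma,\eta}$ from the proof of \Cref{prop:descent}:
\[
\tfrac{1}{\gamma}\phi_1(x)+\tfrac{1}{\gamma}\phi_2(u)+\psi(x)+\tfrac{1-2\eta}{2\gamma}\norm{x-u}^2-\tfrac1\gamma\iprod{x-u}{w-x}+\iprod{\nabla\psi(x)}{u-x}.
\]
The key link is the identity from that proof. Since $x^{k+1}=\prox_{\phi_1}(w^k)$ and $x^k=\prox_{\phi_1}(w^{k-1})$, we have $w^{k-1}=x^k+\nabla\phi_1(x^k)$ and
\[
w^k-w^{k-1}=\delta+d=\eta(u^k-x^k),\qquad \delta=x^{k+1}-x^k,\quad d=\nabla\phi_1(x^{k+1})-\nabla\phi_1(x^k).
\]
Because $\nabla\phi_1$ is $L_\phi$-Lipschitz, this gives
\[
\norm{w^k-w^{k-1}}\le(1+L_\phi)\norm{x^{k+1}-x^k},\qquad \norm{u^k-x^k}\le\tfrac{1+L_\phi}{\eta}\norm{x^{k+1}-x^k}.
\]
So it suffices to bound each block by a multiple of these two quantities.

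\emph{The $w$-block.} This block is smooth in $w$, with partial gradient $(u^k-x^k)/\gamma$. It is bounded directly by the second estimate above.

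\emph{The $u$-block.} The prox step $u^k=\prox_{\phi_2}(2x^k-w^{k-1}-\gamma\nabla\psi(x^k))$ gives, via Fermat's rule for the (limiting) subdifferential, that $2x^k-w^{k-1}-\gamma\nabla\psi(x^k)-u^k\in\partial\phi_2(u^k)$. Since $\phi_2$ enters separably, the $u$-block of $\partial\Theta_{\gamma,\eta}$ contains
\[
\tfrac1\gamma(2x^k-w^{k-1}-\gamma\nabla\psi(x^k)-u^k)-\tfrac{1-2\eta}{\gamma}(x^k-u^k)+\tfrac1\gamma(w^k-x^k)+\nabla\psi(x^k).
\]
In this element the $\nabla\psi$ terms cancel. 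The remainder collapses to $\frac1\gamma(w^k-w^{k-1})+\frac{2\eta}{\gamma}(x^k-u^k)$, which is controlled by the two estimates.

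\emph{The $x$-block.} This block is differentiable. Substituting $\nabla\phi_1(x^k)=w^{k-1}-x^k$, the $\frac1\gamma(\cdot)$ terms reduce to a combination of $w^k-w^{k-1}$ and $x^k-u^k$ with coefficients depending only on $\gamma,\eta$; the two $\nabla\psi(x^k)$ terms cancel. What is left is the derivative of $x\mapsto\iprod{\nabla\psi(x)}{u-x}$, namely $J_{\nabla\psi}(x^k)^\top(u^k-x^k)$, where $J_{\nabla\psi}$ is the Jacobian of $\nabla\psi$.

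\emph{Main obstacle.} This last term is where I expect the difficulty, since the paper deliberately avoids assuming $\psi\in\gC^2$. I would handle it by noting that $\nabla\psi$ is $\beta$-Lipschitz, so any (generalized, Clarke) Jacobian has norm at most $\beta$. This bounds the term by $\beta\norm{u^k-x^k}$, provided $\partial\Theta_{\gamma,\eta}$ is read as the Clarke/limiting subdifferential, for which the chain rule for $\gC^{1,1}$ data gives exactly this inclusion. If that reading is unavailable, I would fall back on requiring $\psi\in\gC^2$ on a neighbourhood of the (bounded) iterates, which holds for both the Laplace--Gaussian and the Poisson--Gaussian fidelities on the region considered. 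Summing the three block bounds then yields $C=\frac{1+L_\phi}{\eta}\bigl(c_1(\gamma,\eta)+\beta\bigr)$ for an explicit $c_1(\gamma,\eta)$.
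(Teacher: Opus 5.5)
Your proposal is correct and matches the paper's proof step for step: the same block-wise subgradient elements, the same identity $w^k-w^{k-1}=\delta+d=\eta(u^k-x^k)$, and the same treatment of the $x$-block via an element $B^k$ of the generalized Jacobian of $\nabla\psi$ with $\|B^k\|\le\beta$. One correction to your fallback remark: the Laplace--Gaussian fidelity is a Huber-type function composed with $A$, which is $\gC^{1,1}$ but not $\gC^2$ (its second derivative jumps at the kinks), so the generalized-Jacobian argument is genuinely needed there rather than optional.
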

\begin{proof}
The proof mainly follows the argument of Lemma B.1 in \cite{bian2021three}, where we only use the assumption that $\nabla\psi$ is
$\beta$-Lipschitz continuous. Notice that we have the
equivalent expression for $\Theta_{\gamma,\eta}$:
\[
\begin{aligned}
\Theta_{\gamma,\eta}(x,u,w)
=&\;
\frac{1}{\gamma}\pso(x)
+
\frac{1}{\gamma}\pst(u)
+
\psi(x)\\
&+
\frac{1-2\eta}{2\gamma}\norm{x-u}^2
-
\frac{1}{\gamma}\iprod{x-u}{w-x}
+
\iprod{\nabla\psi(x)}{u-x}.
\end{aligned}
\]
Let $s^k:=w^k-w^{k-1}$, we now estimate the partial subgradients of
$\Theta_{\gamma,\eta}$. First, with respect to $w$, we have
\[
    \nabla_w\Theta_{\gamma,\eta}(x^k,u^k,w^k)
    =
    \frac{1}{\gamma}(u^k-x^k)
    =
    \frac{1}{\gamma\eta}s^k .
\]
Hence
\[
    \norm{\nabla_w\Theta_{\gamma,\eta}(x^k,u^k,w^k)}
    \leq
    \frac{1}{\gamma\eta}\norm{s^k}.
\]

Next, let $\xi^k\in\partial\pst(u^k)$. From the definition of proximal operator, there holds
$\xi^k+u^k-2x^k+w^{k-1}+\gamma\nabla\psi(x^k)=0$. Then, with \eqref{eq:sk}, we have
\[
\begin{aligned}
\partial_u\Theta_{\gamma,\eta}(x^k,u^k,w^k)
&\ni
\frac{1}{\gamma}\xi^k
+
\frac{1-2\eta}{\gamma}(u^k-x^k)
+
\frac{1}{\gamma}(w^k-x^k)
+
\nabla\psi(x^k)\\
&=
\frac{1}{\gamma}
\left[
w^k-w^{k-1}
+
2\eta(x^k-u^k)
\right]
=
-\frac{1}{\gamma}s^k .
\end{aligned}
\]
Therefore,
\[
\operatorname{dist}
\left(
0,
\partial_u\Theta_{\gamma,\eta}(x^k,u^k,w^k)
\right)
\leq
\frac{1}{\gamma}\norm{s^k}.
\]

Finally, define
\[
q_k(x):=\left\langle \nabla\psi(x),u^k-x\right\rangle.
\]
Then there exists
\[
\zeta^k
\in
\partial_x q_k(x^k)
\]
such that
\[
\zeta^k+\nabla\psi(x^k)
=
B^k(u^k-x^k),
\]
where $B^k$ is an element of the generalized Jacobian of $\nabla\psi$ at $x^k$.
The $\beta$-Lipschitz continuity of $\nabla\psi$ implies
\[
\norm{B^k}\leq \beta.
\]
Therefore,
\[
\norm{\zeta^k+\nabla\psi(x^k)}
\leq
\beta\norm{u^k-x^k}.
\]
Using \eqref{eq:sk} and $\|s^k\| = \|\delta + d\| \leq (1+L_\phi)\|x^{k+1}-x^k\|$, we obtain
\[
\begin{aligned}
\partial_x\Theta_{\gamma,\eta}(x^k,u^k,w^k)
&\ni
\frac{1}{\gamma}
\left(
\nabla\pso(x^k)+x^k-w^k
\right)
+
\frac{2(1-\eta)}{\gamma}(x^k-u^k)
+
\nabla\psi(x^k)+\zeta^k\\
&=
-\frac{1}{\gamma}s^k
-
\frac{2(1-\eta)}{\gamma\eta}s^k
+
\nabla\psi(x^k)+\zeta^k .
\end{aligned}
\]
Therefore,
\[
\operatorname{dist}
\left(
0,\partial_x\Theta_{\gamma,\eta}(x^k,u^k,w^k)
\right)
\leq
\left(
\frac{1}{\gamma}
+
\frac{2(1-\eta)}{\gamma\eta}
+
\frac{\beta}{\eta}
\right)
\norm{s^k}.
\]

Combining the estimates for all the components, there exists a
constant $C_0>0$ such that
\[
    \dist
    \left(
        0,\partial\Theta_{\gamma,\eta}(x^k,u^k,w^k)
    \right)
    \leq
    C_0\norm{s^k} \le C_0(1+L_\phi)\norm{x^{k+1}-x^k} 
\]
This completes the proof.
\end{proof}

\subsection{Proof of Theorem~\ref{thm:global_convergence}}

\begin{proof}
By Proposition \ref{prop:descent}, the PnP-DYS energy function
$\Theta_{\gamma,\eta}(x^k,u^k,w^k)$ satisfies the sufficient descent inequality
\[
\Theta_{\gamma,\eta}(x^{k+1},u^{k+1},w^{k+1})
-
\Theta_{\gamma,\eta}(x^k,u^k,w^k)
\leq
-\Lambda(\gamma,\eta)\norm{x^{k+1}-x^k}^2 .
\]
Since $\Lambda(\gamma,\eta)>0$, the sequence
$\{\Theta_{\gamma,\eta}(x^k,u^k,w^k)\}_{k\geq0}$ is nonincreasing.
Together with the lower semicontinuity of $\Theta_{\gamma,\eta}$ along the
generated sequence and following the proof of \cite[Theorem 3.7]{bian2021three},
we obtain that, for any cluster point $(\xsol,\usol,\wsol)$,
\[
    \Theta^\ast
    :=
    \lim_{k\to\infty}
    \Theta_{\gamma,\eta}(x^k,u^k,w^k)
    =
    \Theta_{\gamma,\eta}(\xsol,\usol,\wsol).
\]

Define $E_k:=\Theta_{\gamma,\eta}(x^k,u^k,w^k)$. If $E_k=\Theta^\ast$ is a fixed point for some $k$, the finite-length property holds trivially. Therefore, we only consider the case where $E_k>\Theta^\ast$ for all sufficiently large $k$.

By Lemma \ref{lem:relative-error}, there exists $C>0$ such that
\[
    \dist
    \left(
        0,\partial\Theta_{\gamma,\eta}(x^k,u^k,w^k)
    \right)
    \leq
    C\norm{x^{k+1}-x^k}.
\]
Since $\Theta_{\gamma,\eta}$ is a KL function, following the proof in
\cite[Theorem 3.7]{bian2021three}, for all sufficiently large $k$, the KL
inequality holds along the tail of the sequence. 
Using the concavity of the desingularizing function $\Psi$, we obtain
\[
\begin{aligned}
&\Psi(E_k-\Theta^\ast)-\Psi(E_{k+1}-\Theta^\ast)\\
&\geq
\Psi'(E_k-\Theta^\ast)(E_k-E_{k+1})\\
&\geq
\frac{E_k-E_{k+1}}
{
\dist
\left(
0,\partial\Theta_{\gamma,\eta}(x^k,u^k,w^k)
\right)
}\\
&\geq
\frac{\Lambda(\gamma,\eta)\norm{x^{k+1}-x^k}^2}{C \norm{x^{k+1}-x^k}}
=
\frac{\Lambda(\gamma,\eta)}{C}\norm{x^{k+1}-x^k} ,
\end{aligned}
\]
%where the case $\norm{x^{k+1}-x^k}=0$ is trivial. 
Therefore,
\[
    \norm{x^{k+1}-x^k}
    \leq
    \frac{C}{\Lambda(\gamma,\eta)}
    \left[
    \Psi(E_k-\Theta^\ast)-\Psi(E_{k+1}-\Theta^\ast)
    \right].
\]
Summing over $k$ yields
\[
    \sum_{k=0}^{\infty}\norm{x^{k+1}-x^k}<+\infty .
\]
The finite length property of $\{(w^k)\}_{k\geq 0}$ and $\{(u^k)\}_{k\geq 0}$ can be obtained from \eqref{eq:sk} and $\|s^k\| \leq (1+L_\phi)\|{x^{k+1}-x^k}\|$. 

It remains to show criticality of the cluster points. Suppose that we have convergence $(x^k, u^k, w^k) \rightarrow (x^*, u^*, w^*)$. The update for $w^k$ in \labelcref{maineq:pnp-dys-prox} immediately gives that $u^* = x^*$. Moreover, the optimality condition $x^{k+1} = \prox_{\phi_1}$ gives $w^k - x^{k+1} = \nabla \phi_1(x^{k+1})$. Passing to the limit yields $w^* - x^* = \nabla \phi_1(x^*)$.

Now define $q^k = 2x^{k+1} - w^k - \gamma \nabla \phi(x^{k+1})$ such that $u^{k+1} \in \prox_{\phi_2} (q^k)$. Then $q^k \rightarrow q^* = 2x^* - w^* - \gamma \nabla \psi(x^*)$. Since $\phi_2$ is proper and closed, its proximal also has a closed graph, and therefore $x^* = u^* \in \prox_{\phi_2}(q^*)$. The optimality condition therefore gives
\begin{equation*}
    0 \in \partial \phi_2(x^*) + x^* - q^*.
\end{equation*}
Substituting the definition of $q^*$ and using $w^* - x^* = \nabla \phi_1(x^*)$ yields
\begin{equation*}
    0 \in \partial \phi_2(x^*) + \nabla \phi_1(x^*) + \gamma \nabla \psi(x^*)
\end{equation*}
as desired.
\end{proof}

\section{Experimental details}
\subsection{GS-PnP and DPIR}\label{app:PnPMethods}
A critical distinction among the evaluated methods is the treatment of the data fidelity term. We divide the methods into two categories based on the data fidelity used:

\begin{enumerate}
\item Single-prior optimization with mixed-noise fidelity: 

We consider the PnP-PGD method, which incorporates the mixed-noise fidelity but focuses on the single-prior objective:
\beq\label{eq:obj-pgd}
\min_x\ h_2(x):=\psi(x)+\sfrac{1}{\gamma}\phi_{\sigma}(x)
\eeq
The iteration is formulated as:
\[
\left\{
\begin{aligned}
&\zk = \xk - \gamma \nabla \psi(\xk) \\
&\xkp =\gD_{\sigma}(\zk)
\end{aligned}
\right.
\]
where $\gD_{\sigma}=\prox_{\phi_{\sigma}}$.

\item Single-prior optimization with approximate Gaussian fidelity:

For PnP-DRS, GS-PnP and DPIR, integrating the mixed-noise fidelity $\psi(x)$ is computationally difficult because they require evaluating $\prox_{\psi}$. Consequently, we have to use the classical Gaussian data fidelity for these methods. Define
$\psi_g(x)=\frac{\lamg}{2}\norm{y-Ax}^2$, 
We consider the simplified objective:
\beq\label{eq:obj-gs}
\min_x\ h_3(x):=\psi_g(x) + \sfrac{1}{\gamma}\phi_{\sigma}(x)
\eeq
The iterative schemes for solving \eqref{eq:obj-gs} are listed below:

GS-PnP:
\[
\begin{aligned}
    &\text{find } \tau_k \in \{\tau, \eta\tau, \eta^2\tau, \dots\} \text{ satisfying } h_3(\xk) - h_3(\xkp) \ge \frac{c}{\tau_k} \norm{\xk - \xkp}^2: \\
    &\left\{
    \begin{aligned}
        \zk &= (\tau_k/\gamma) \gD_{\sigma}(\xk) + (1- \tau_k/\gamma) \xk \\
        \xkp &=\prox_{\tau_k \psi_g}(\zk)
\end{aligned}\right.\quad ,\eta\in(0,1),c\in(0,1/2)
\end{aligned}
\]
PnP-DRS:
\[
\left\{\begin{aligned}
&\ykp = \gD_{\sigma}(\xk)\\
&\zkp =\prox_{\gamma\psi_g}(2\ykp-\xk) \\
&\xkp = \xk + \zkp-\ykp
\end{aligned}\right. 
\]
DPIR: %\textcolor{red}{Fix this part with the logspacing}
\[
\left\{\begin{aligned}
&\ykp = \prox_{\frac{\gamma\sigma^2}{\sigma_k^2}\psi_g}(\xk)\\
&\xkp = \gD_{\sigma_k}(\ykp)\end{aligned}\right.
\]
where $\{\sigma_k\}_{k=1}^K$ is a sequence of noise levels decreasing monotonically via logarithmic spacing.
\end{enumerate}

\subsection{Camera kernels for deblurring}
The blur kernels are given by \Cref{fig:all_kernels}. Kernels 1 to 8 are real-world camera shake kernels, kernel 9 is a $9 \times 9$ uniform kernel and kernel 10 is a $25 \times 25$ Gaussian kernel with standard deviation $1.6$.

\begin{figure}[!hbtp]
    \centering
    \includegraphics[width=0.13\textwidth]{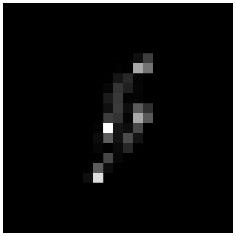} 
    \includegraphics[width=0.13\textwidth]{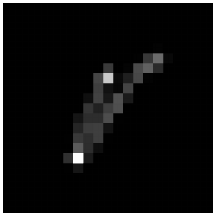} 
    \includegraphics[width=0.13\textwidth]{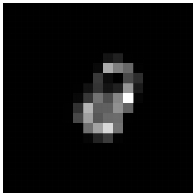} 
    \includegraphics[width=0.13\textwidth]{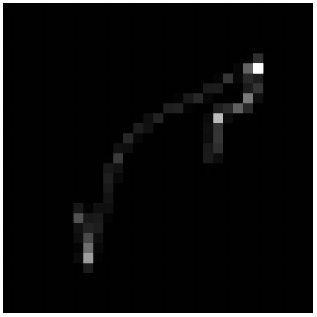} 
    \includegraphics[width=0.13\textwidth]{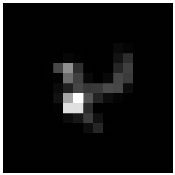} 
    \\
    \includegraphics[width=0.13\textwidth]{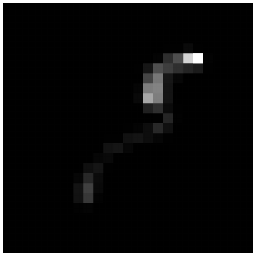} 
    \includegraphics[width=0.13\textwidth]{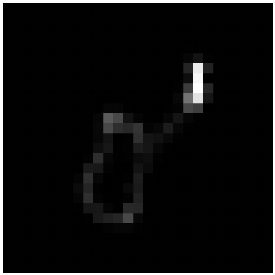} 
    \includegraphics[width=0.13\textwidth]{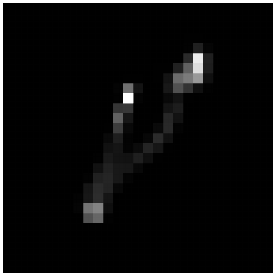} 
    \includegraphics[width=0.13\textwidth]{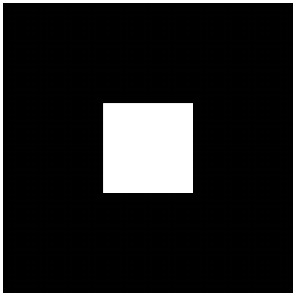} 
    \includegraphics[width=0.13\textwidth]{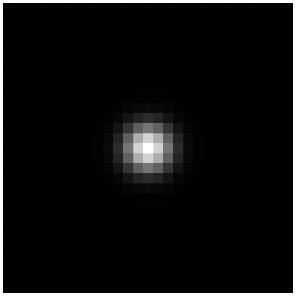}
    
    \caption{The 10 blur kernels used in our experiments. } 
    \label{fig:all_kernels}
\end{figure}

\section{Hyperparameter selection}\label{sec:params}

% \subsection{Hyperparameter selection}\label{sec:hyper-selection}
To determine the optimal hyperparameters for the image deblurring and super-resolution tasks, we first carry out grid-search experiments on the set3c dataset. For each parameter configuration, we evaluate the reconstruction performance across 8 distinct blur kernels for the image deblurring task, and across downsampling scale factors of $s=2,3$, then select the hyperparameters that yield the highest average PSNR.

To ensure a fair comparison, we carefully optimize the hyperparameters for all considered algorithms. For problem \eqref{eq:obj-pgd} and \eqref{eq:obj-gs} and corresponding algorithms, we fix the algorithm step size $\gamma=1.0$ to comply with the standard convergence analysis of PnP methods.
Specifically, assuming the data fidelity term $\psi_g$ has an $L_{\psi_g}$-Lipschitz gradient with $L_{\psi_g}<1$, the convergence of PnP-PGD and PnP-DRS is theoretically guaranteed when $\lamg L_{\psi_g}<1$ \cite{hurault2023relaxed,hurault2024convergent}. The parameters for GS-PnP and DPIR are further configured following the settings in \cite{hurault2021gradient} and \cite{zhang2021plug}, respectively.

For PnP-DYS, the parameter selection is theoretically guided by \Cref{prop:descent}. We fix $\gamma=2$ so that the objective function of \eqref{eq:proxyF} is the same as \eqref{eq:obj-pgd} when $\phi_{\sgo}=\phi_{\sgt}=\phi_{\sigma}$. The relaxation parameter $\eta$ should be chosen to satisfy $\Lambda(\gamma,\eta)>0$ as defined in \eqref{eq:Gamma}; in our implementations, we fix $\eta=0.2$ to ensure stable algorithmic descent and reduce hyperparameter tuning requirements. We note that larger relaxation $\eta$ can likely be used if a relaxed denoiser is used to reduce the Lipschitz constant of $g$.

 To account for the mixed noise, we define the ``total noise level'', such as $\sigma_{\total} = \sigma_g + \sigma_l$ for the Laplace--Gaussian noise, where $\sigma_g$ and $\sigma_l$ correspond to the standard deviation of the Gaussian noise $\mathcal{N}(0, \sigma_g^2)$ and the scale parameter of the Laplace noise $\text{Lap}(0, \sigma_l)$, respectively. We then introduce the noise scale ratios $\rho_1 = \sigma_1 / \sigma_{\total}$ and $\rho_2 = \sigma_2 / \sigma_{\total}$, where $\sigma_1$ and $\sigma_2$ represent the denoising strengths applied to the corresponding denoisers.

For PnP-DYS, we optimize the fidelity weights (e.g., $\laml, \lamg$ or $\lamp, \lamg$) alongside two scale ratios $\rho_1, \rho_2$ for the dual denoisers. For PnP-PGD, the search involves the fidelity weights and a single scale ratio $\rho_1$. For GS-PnP and PnP-DRS, we tune the data fidelity weight $\lamg$ and the ratio $\rho_1$. For DPIR, we only need to select $\lamg$. The specific numerical search grids for each method vary across different noise types and the image deblurring and super-resolution tasks.

All hyperparameters are selected by grid search on the set3c dataset and then evaluated on CBSD10 dataset. In the following, we show the hyperparameter selections for Laplace--Gaussian mixed noise and Poisson--Gaussian mixed noise respectively.

\subsection{Laplace--Gaussian mixed noise}\label{sec:params-lg}

For the image deblurring task, the fidelity weights of PnP-DYS are searched within $\laml, \lamg \in \{1, 2, 3\}$, while the denoiser scale ratios are tuned over $\rho_1 \in \{0.6, 0.9, 1.2, 1.5, 1.8\}$ and $\rho_2 \in \{0.3, 0.6, 0.9\}$. For PnP-PGD, we explore the same fidelity weight grid alongside the single noise ratio $\rho_1$. GS-PnP is tuned with a weight grid of $\laml \in \{1, 2, 3, 4, 5, 6\}$. Finally, as PnP-DRS and DPIR exhibit significant convergence instability for the deblurring task even with small values of $\lamg$, so we empirically set $\lamg=0.1$ for both methods and $\rho_1=0.6$ for PnP-DRS to achieve the most stable results possible. 

For the super-resolution task, the grid search of $\rho_1$ is set to $\rho_1 \in \{0.6, 0.9, 1.2, 1.5, 1.8, 2.0\}$. We further search $\lamg\in\{0.1, 0.2, 0.4, 0.6, 0.8, 1.0\}$ for PnP-DRS and DPIR. For DRS, we set $\lamg=0.1$ and $\rho_1=0.6$. For DPIR, we set $\lamg=0.4$.
The results are summarized in Table \ref{tab:sisr_laplace}.
The hyperparameter selection results of PnP-DYS, PnP-PGD and GS-PnP are shown in Table~\ref{tab:dys-laplace}, Table~\ref{tab:pgd-laplace} and  Table~\ref{tab:gs-laplace} respectively.

For high noise deblurring, the hyperparameter grid search range is extended. For data fidelity parameters, we search within $\laml \in \{0.1, 0.5, 1.0, 2.0, 3.0\}$ and $\lamg \in \{0.5, 1.0, 2.0, 3.0\}$. The denoiser scale ratios $\rho_i$ follow the search strategy established in the previous sections. Under this criterion, the optimal configuration for PnP-DYS is identified as $\laml = 0.5$, $\lamg = 2.0$ with $\rho_1 = 1.2$ and $\rho_2 = 0.3$. Similarly, the optimal parameters for PnP-PGD are set to $\lambda_l = 0.5$, $\lambda_g = 1.0$ with $\rho = 0.6$, while for GS-PnP, the configuration is $\lambda = 1.0$ and $\rho = 0.6$.

\begin{table}[!htbp]
    \centering
    \caption{Hyperparameter selections of PnP-DYS for image deblurring and single-image super-resolution  tasks under Laplace--Gaussian mixed noise.}
    \label{tab:dys-laplace}
    \label{tab:dys-laplace}
    \begin{tabular}{lcccccccccc}
        \toprule
        & \multicolumn{4}{c}{Deblur} & \multicolumn{4}{c}{SR} \\
        \cmidrule(lr){2-5} \cmidrule(lr){6-9} 
        $\sigma_g$ & 2.55 & 2.55 & 7.65 & 7.65 & 2.55 & 2.55 & 7.65 & 7.65 \\
        $\sigma_l$ & 2.55 & 7.65 & 2.55 & 7.65 & 2.55 & 7.65 & 2.55 & 7.65 \\
        \midrule
        $\laml$ &  1 & 1 & 1 & 1 & 1& 1 & 1 & 1\\
        $\lamg$ & 3 & 3 & 3 & 3 & 3 & 2 & 3 & 3\\
        $\rho_1$ & 1.5 & 1.8 & 1.2 & 1.5 & 2.0 & 1.8 & 1.8 & 1.8 \\
        $\rho_2$ & 0.3 & 0.3 & 0.3 & 0.3 & 0.3 & 0.3 & 0.3 & 0.3\\
        \bottomrule
    \end{tabular}
\end{table}

\begin{table}[!htbp]
    \centering
    \caption{Hyperparameter selections of PnP-PGD for image deblurring and single-image super-resolution  tasks under Laplace--Gaussian mixed noise.}
    \label{tab:pgd-laplace}
    \begin{tabular}{lcccccccccc}
        \toprule
        & \multicolumn{4}{c}{Deblur} & \multicolumn{4}{c}{SR} \\
        \cmidrule(lr){2-5} \cmidrule(lr){6-9} 
        $\sigma_g$ & 2.55 & 2.55 & 7.65 & 7.65 & 2.55 & 2.55 & 7.65 & 7.65 \\
        $\sigma_l$ & 2.55 & 7.65 & 2.55 & 7.65 & 2.55 & 7.65 & 2.55 & 7.65 \\
        \midrule
        $\laml$ &  1 & 1 & 1 & 1 & 1& 1& 1& 1\\
        $\lamg$ & 2 & 2 & 2 & 2 & 3& 3& 3& 3\\
        $\rho_1$ & 0.9 & 0.9 & 0.9 & 0.9 & 1.5 & 1.5 & 1.2 & 1.2\\
        \bottomrule
    \end{tabular}
\end{table}

\begin{table}[!htbp]
    \centering
    \caption{Hyperparameter selections of GS-PnP for image deblurring and single-image super-resolution  tasks under Laplace--Gaussian mixed noise.}
    \label{tab:gs-laplace}
    \begin{tabular}{lcccccccccc}
        \toprule
        & \multicolumn{4}{c}{Deblur} & \multicolumn{4}{c}{SR} \\
        \cmidrule(lr){2-5} \cmidrule(lr){6-9} 
        $\sigma_g$ & 2.55 & 2.55 & 7.65 & 7.65 & 2.55 & 2.55 & 7.65 & 7.65 \\
        $\sigma_l$ & 2.55 & 7.65 & 2.55 & 7.65 & 2.55 & 7.65 & 2.55 & 7.65 \\
        \midrule
        $\lamg$ &  5 & 2 & 3 & 2 & 4 & 2 & 2 & 1\\
        $\rho_1$ & 1.2 & 0.9 & 0.9 & 0.6 & 1.5 & 0.9 & 0.9 & 0.6\\
        \bottomrule
    \end{tabular}
\end{table}

\subsection{Poisson--Gaussian mixed noise}
\label{sec:params-pg}
Recall from \cref{eqs:PoissonGaussianGrad} that the infimal convolution fidelity is given by 
\begin{equation*}
    \psi(x) = \min_v \left[\lambda_p\KL(v \|Ax) + \frac{\lambda_g}{2}\|y-v\|^2\right].
\end{equation*}
The fidelity weights are searched over
\[
    \lamp,\lamg \in \{0.5,1.0,2.0,3.0,5.0,10.0\}.
\]
For image deblurring, the selected PnP-DYS parameters are
$(\lamp,\lamg,\rho_1,\rho_2)=(5.0,5.0,1.2,0.3)$ for
$\sigma_p\in\{20,200\}$ and
$(\lamp,\lamg,\rho_1,\rho_2)=(2.0,5.0,0.9,0.3)$ for
$\sigma_p\in\{40,60,100\}$.
For PnP-PGD, we use
$(\lamp,\lamg,\rho)=(5.0,2.0,0.6)$ for all $\sigma_p$ levels.
For GS-PnP, we use $(\lambda,\rho)=(1.0,0.6)$ for
$\sigma_p\in\{20,40\}$ and $(\lambda,\rho)=(2.0,0.6)$ for
$\sigma_p\in\{60,100,200\}$.

For image super-resolution, the selected PnP-DYS parameters are
$(\lamp,\lamg,\rho_1,\rho_2)=(2.0,3.0,0.9,0.3)$ for
$\sigma_p\in\{20,40,60\}$ and
$(\lamp,\lamg,\rho_1,\rho_2)=(3.0,5.0,1.2,0.3)$ for
$\sigma_p\in\{100,200\}$.
For PnP-PGD, we use
$(\lamp,\lamg,\rho)=(2.0,3.0,0.6)$ for $\sigma_p=20$ and
$(\lamp,\lamg,\rho)=(3.0,5.0,0.9)$ for
$\sigma_p\in\{40,60,100,200\}$.
For GS-PnP, we use $(\lambda,\rho)=(1.0,0.6)$ for $\sigma_p=20$ and
$(\lambda,\rho)=(2.0,0.6)$ for
$\sigma_p\in\{40,60,100,200\}$.

\section{Ablation study on hyperparameters}\label{sec:ablation}
% \begin{enumerate}
%     \item Check sensitivity of methods to the choice of $\lambda, \sigma_d$
%     \item Can be done for one fixed noise level, say $\sigma_g, \sigma_l = 0.03$, 
%     \item Same for high noise scenario for TV denoiser. 
% \end{enumerate}

In this section, we study the hyperparameter sensitivity of the proposed method for the image deblurring problem under Laplace--Gaussian mixed noise. 

In our PnP-DYS framework, we have four key parameters: $\laml, \lamg$ for the data fidelity terms and $\rho_1, \rho_2$ for the noise level of denoisers. It is interesting to investigate how their combinations influence the reconstruction effectiveness. To this end, we conduct a sensitivity analysis under the following scenario: the blurred image is corrupted by mixed noise consisting of $3\%$ Gaussian and $3\%$ Laplace components.
\begin{enumerate}[label={\rm (\roman*)}, ref={\rm (\roman*)}]
    \item $\laml$ and $\lamg$: We first fix $\rho_1=1.2$ and $\rho_2=0.3$ to investigate the sensitivity of the performance with respect to $\laml,\lamg\in[0.1, 0.5, 1.0, 1.5, 2.0, 2.5, 3.0]$. As shown in Table~\ref{tab:ablation_lambda}, $\laml$ and $\lamg$ control the balance between the Gaussian and Laplacian noise priors. We observe that the best performance is achieved when $(\laml, \lamg) = (0.5, 2.5)$ with a peak PSNR of 27.64 dB.  Notably, the performance remains remarkably stable for $\laml \in [0.5, 3.0]$ and $\lamg \in [2.0, 3.0]$, suggesting that PnP-DYS framework is quite robust for a wide range of settings.
    %$\laml/\lamg$ matches the ratio of the noise levels \item $\sigma_g/\sigma_l$. A significant deviation from this ratio leads to either oversmoothing or residual artifacts.
    \item $\rho_1$ and $\rho_2$: We then fix $\laml=1$ and $\lamg=3$ and perform a grid search on $\rho_1,\rho_2\in [0.3, 0.6, 0.9, 1.2, 1.5, 1.8]$. The experimental results in Table~\ref{tab:ablation_rho} show that the peak performance is achieved when one of the parameters is set to a relatively small value while the other is moderately larger. Specifically, a maximum PSNR of 27.62 dB is obtained at $(\rho_1, \rho_2) = (0.3, 1.5)$. Interestingly, the table displays a symmetric-like performance: for instance, the combination $(1.5, 0.3)$ also yields a high PSNR of 27.60 dB. This result is expected, because under this symmetric parameter settings, the underlying optimization problem remains invariant, thereby the resulting solutions should be the same. In practice, we always set $\rho_1$ to be relatively large while $\rho_2$ is kept smaller. 
\end{enumerate}

\begin{table}[!htbp]
    \centering
    \caption{PSNR (dB) results for different data fidelity parameters $\laml$ and $\lamg$ for Laplace--Gaussian noise CBSD10 dataset under kernel $2$, $\sigma_g=\sigma_l=7.65$ and fixed $\rho_1=1.5, \rho_2=0.3$. Observe that the reconstruction does not change as $\lambda_l$ changes for smaller values of $\lambda_g$. This is explained by \cref{eq:LaplaceGaussianFidelityGrad}: $\lambda_l$ enters only through the proximal step. Equal reconstructions indicate that the optimal slack variable $\bar{w}$ is very close to $y$ and the proximal is likely clamping all pixels.}
    \label{tab:ablation_lambda}
    \begin{tabular}{c|ccccccc}
        \toprule
        $\laml \setminus \lamg$ & 0.1 & 1.0 & 2.0 & 3.0 & 4.0 & 5.0 & 10.0\\
        \midrule
        0.1 & 23.23 & 25.93 & 26.52 & 26.66 & 26.72 & 26.77 & 26.78\\
        1.0 & 23.23 & 26.03 & 27.12 & 27.60 & 27.54 & 25.70 & 1.60\\
        2.0 & 23.23 & 26.03 & 27.12 & 27.60 & 27.53 & 25.52 & -6.00\\
        3.0 & 23.23 & 26.03 & 27.12 & 27.60 & 27.53 & 25.52 & -10.42\\
        4.0 & 23.23 & 26.03 & 27.12 & 27.60 & 27.53 & 25.52 & -13.52\\
        5.0 & 23.23 & 26.03 & 27.12 & 27.60 & 27.53 & 25.52 & -15.81\\
        10.0 & 23.23 & 26.03 & 27.12 & 27.60 & 27.53 & 25.52 &-22.40\\
        \bottomrule
    \end{tabular}
\end{table}

\begin{table}[!htbp]
    \centering
    \caption{PSNR (dB) results for different denoising levels $\rho_1$ and $\rho_2$ on CBSD10 dataset under kernel $2$, $\sigma_g=\sigma_l=7.65$ and fixed $\laml=1, \lamg=3$. While the target functionals are identical when flipping $\rho_1,\rho_2$, the DYS splitting is not symmetric, and an additional asymmetry arises from the final application of the first denoiser $\gD_1$ for the final reconstruction.}
    \label{tab:ablation_rho}
    \begin{tabular}{c|cccccc}
        \toprule
        $\rho_1 \setminus \rho_2$ & 0.3 & 0.6 & 0.9 & 1.2 & 1.5 & 1.8\\
        \midrule
        0.3 & 10.33 & 9.86 & 23.08 & 27.45 & 27.62 & 27.37 \\
        0.6 & 9.88 & 19.13 & 27.01 & 27.59 & 27.45 & 27.17\\
        0.9 & 23.11 & 27.05 & 27.51 & 27.31 & 27.15 & 26.93\\
        1.2 & 27.45 & 27.61 & 27.39 & 26.93 & 26.57 & 26.61\\
        1.5 & 27.60 & 27.46 & 27.19 & 26.81 & 26.18 & 25.91\\
        1.8 & 27.37 & 27.18 & 26.94 & 26.69 & 26.16 & 25.64\\
        \bottomrule
    \end{tabular}
\end{table}

\section{Experimental Results}\label{sec:ex-results}
For completeness, here we report the detailed PSNR values over all 10 blur kernels and all considered Laplace--Gaussian noise levels for the deblurring task.

\begingroup
\scriptsize
\setlength{\tabcolsep}{4pt}
\renewcommand{\arraystretch}{1.08}

\begin{longtable}{cccccc}
    \caption{Performance comparison table of deblurring for Laplace--Gaussian mixed noise. Average PSNR over CBSD10 images across different deblurring kernels and noise levels. The best results are highlighted in bold, and the second-best results are underlined.}
    \label{tab:performance_final_pro}\\
    \toprule
    \multirow{3}{*}{Methods} & \multirow{3}{*}{$k$} & \multicolumn{4}{c}{($\sigma_g, \sigma_l$)}  \\
    \cmidrule(lr){3-6}
    & & (2.55, 2.55) & (2.55, 7.65) & (7.65, 2.55) & (7.65, 7.65)  \\
    \midrule
    \endfirsthead

    \caption[]{Performance comparison table of deblurring for Laplace--Gaussian mixed noise (continued).}\\
    \toprule
    \multirow{3}{*}{Methods} & \multirow{3}{*}{$k$} & \multicolumn{4}{c}{($\sigma_g, \sigma_l$)} \\
    \cmidrule(lr){3-6}
    & & (2.55, 2.55) & (2.55, 7.65) & (7.65, 2.55) & (7.65, 7.65)  \\
    \midrule
    \endhead

    \endfoot

    \bottomrule
    \endlastfoot

    \dysgs & \multirow{6}{*}{1} & \underline{31.65} & \underline{28.41} & \underline{29.15} & \underline{27.84}  \\
    \dystv &  & \textbf{31.69} & \textbf{28.51} & \textbf{29.37} & \textbf{27.91}  \\
    PnP-PGD & & 30.76 & 27.82 & 28.57 & 27.36  \\
    GS-PnP  & & 30.74 & 27.48 & 28.25 & 26.57  \\
    PnP-DRS  & & 24.75 & 22.39 & 23.35 & 21.63  \\
    DPIR  & & 24.19 & 23.81 & 24.07 & 23.95  \\
    \midrule

    \dysgs & \multirow{6}{*}{2} & \textbf{31.10} & \textbf{28.17} & \underline{28.98} & \textbf{27.60}  \\
    \dystv &  & \underline{31.05} & \underline{28.16} & \textbf{29.04} & \underline{27.55}  \\
    PnP-PGD & & 30.41 & 27.87 & 28.33 & 27.24  \\
    GS-PnP  & & 30.56 & 27.48 & 28.20 & 26.81  \\
    PnP-DRS  & & 20.55 & 14.90 & 16.59 & 13.80  \\
    DPIR  & & 15.50 & 9.14 & 11.12 & 7.80  \\
    \midrule

    \dysgs & \multirow{6}{*}{3} & \textbf{31.12} & \textbf{28.39} & \textbf{29.24} & \textbf{27.77}  \\
    \dystv &  & \underline{31.06} & \underline{28.32} & \underline{29.23} & \underline{27.73}  \\
    PnP-PGD & & 30.46 & 28.19 & 28.55 & 27.63  \\
    GS-PnP  & & 30.77 & 27.93 & 28.60 & 27.40  \\
    PnP-DRS  & & 20.02 & 14.64 & 16.23 & 13.59  \\
    DPIR  & & 16.08 & 8.53 & 10.70 & 7.12  \\
    \midrule

    \dysgs & \multirow{6}{*}{4} & \textbf{30.86} & \underline{27.84} & \underline{28.64} & \textbf{27.26}  \\
    \dystv &  & \underline{30.83} & \textbf{27.85} & \textbf{28.76} & \underline{27.25}  \\
    PnP-PGD & & 30.06 & 27.43 & 27.94 & 26.85  \\
    GS-PnP  & & 30.33 & 27.16 & 27.92 & 26.48  \\
    PnP-DRS  & & 20.37 & 14.69 & 16.38 & 13.57  \\
    DPIR  & & 15.61 & 9.09 & 11.12 & 7.75  \\
    \midrule

    \dysgs & \multirow{6}{*}{5} & \textbf{32.42} & \underline{29.20} & \textbf{29.93} & \underline{28.72}  \\
    \dystv & & \underline{32.35} & 29.08 & 29.77 & 28.54  \\
    PnP-PGD & & 31.92 & \textbf{29.29} & \underline{29.92} & \textbf{28.75}  \\
    GS-PnP  & & 31.68 & 28.79 & 29.46 & 27.93  \\
    PnP-DRS  & & 21.13 & 15.68 & 17.34 & 14.60  \\
    DPIR  & & 16.94 & 9.36 & 11.55 & 7.95  \\
    \midrule

    \dysgs & \multirow{6}{*}{6} & \textbf{32.44} & \textbf{29.20} & \textbf{29.97} & \textbf{28.71}  \\
    \dystv &  & \underline{32.42} & \underline{29.13} & 29.72 & \underline{28.60}  \\
    PnP-PGD & & 32.03 & 29.12 & \underline{29.79} & 28.51  \\
    GS-PnP  & & 31.60 & 28.56 & 29.28 & 27.62  \\
    PnP-DRS  & & 21.41 & 15.68 & 17.40 & 14.55  \\
    DPIR  & & 16.24 & 9.34 & 11.41 & 7.95  \\
    \midrule

    \dysgs & \multirow{6}{*}{7} & \textbf{31.14} & \textbf{28.36} & \textbf{29.11} & \textbf{27.95}  \\
    \dystv &  & 31.02 & 28.20 & 28.80 & 27.75  \\
    PnP-PGD & & \underline{30.66} & \underline{28.28} & \underline{28.86} & \underline{27.87}  \\
    GS-PnP  & & \underline{30.66} & 27.98 & 28.63 & 27.32  \\
    PnP-DRS  & & 20.28 & 14.99 & 16.57 & 13.95  \\
    DPIR  & & 15.40 & 8.78 & 10.81 & 7.44  \\
    \midrule

    \dysgs & \multirow{6}{*}{8} & \textbf{30.83} & \textbf{28.22} & \textbf{28.95} & \textbf{27.76}  \\
    \dystv &  & \underline{30.74} & \underline{28.17} & \underline{28.86} & \underline{27.68}  \\
    PnP-PGD & & 30.17 & 28.02 & 28.35 & 27.47  \\
    GS-PnP  & & 30.42 & 27.70 & 28.36 & 27.03  \\
    PnP-DRS  & & 19.99 & 14.60 & 16.19 & 13.54  \\
    DPIR  & & 14.99 & 8.41 & 10.43 & 7.06  \\
    \midrule

    \dysgs & \multirow{6}{*}{9} & \textbf{27.41} & \textbf{26.14} & \textbf{26.63} & \textbf{25.82}  \\
    \dystv &  & \underline{27.39} & \underline{26.11} & \underline{26.62} & 25.77  \\
    PnP-PGD & & 27.07 & 26.03 & 26.20 & 25.64  \\
    GS-PnP  & & 27.37 & 25.96 & 26.38 & \underline{25.80}  \\
    PnP-DRS  & & 19.02 & 14.31 & 15.77 & 13.31  \\
    DPIR  & & 17.01 & 11.18 & 13.06 & 9.91  \\
    \midrule

    \dysgs & \multirow{6}{*}{10} & \underline{28.91} & \textbf{27.71} & \underline{28.12} & \textbf{27.47}  \\
    \dystv & &  \textbf{28.97} & \underline{27.70} & \textbf{28.13} & \underline{27.38}   \\
    PnP-PGD & & 28.49 & 27.60 & 27.83 & 27.35  \\
    GS-PnP  & & 28.79 & 27.64 & 27.97 & 27.32  \\
    PnP-DRS  & & 26.21 & 23.69 & 24.70 & 22.88  \\
    DPIR  & & 26.35 & 25.59 & 26.20 & 25.96  \\
    \bottomrule
\end{longtable}

\endgroup

%%=============================================%%
%% For submissions to Nature Portfolio Journals %%
%% please use the heading ``Extended Data''.   %%
%%=============================================%%

%%=============================================================%%
%% Sample for another appendix section			       %%
%%=============================================================%%

%% \section{Example of another appendix section}\label{secA2}%
%% Appendices may be used for helpful, supporting or essential material that would otherwise 
%% clutter, break up or be distracting to the text. Appendices can consist of sections, figures, 
%% tables and equations etc.

\end{appendices}

%%===========================================================================================%%
%% If you are submitting to one of the Nature Portfolio journals, using the eJP submission   %%
%% system, please include the references within the manuscript file itself. You may do this  %%
%% by copying the reference list from your .bbl file, paste it into the main manuscript .tex %%
%% file, and delete the associated \verb+\bibliography+ commands.                            %%
%%===========================================================================================%%

\bibliography{refs}% common bib file
%% if required, the content of .bbl file can be included here once bbl is generated
%%\input sn-article.bbl

\end{document}